\numberwithin{equation}{section}
\newtheorem{theorem}{Theorem}[section]
\newtheorem{remark}[theorem]{Remark}
\newtheorem{proposition}[theorem]{Proposition}
\newtheorem{lemma}[theorem]{Lemma}
\newtheorem{prop}[theorem]{Proposition}
\theoremstyle{definition}
\newtheorem{defi}{Definition}[section]
\newtheorem{rem}[defi]{Remark}
\newcommand\be{\beta}
\newcommand\g{\mathfrak g}
\newcommand\h{\mathfrak h}
\newcommand\D{\Delta}
\renewcommand\l{\lambda}
\newcommand\Dp{\Delta^+}
\renewcommand\d{\delta}
\renewcommand\a{\alpha}
\renewcommand\aa{\mathfrak a}
\renewcommand\k{\mathfrak k}
\newcommand\ganz{\mathbb Z}
\newcommand\s{\sigma}
\renewcommand\aa{\mathfrak a}
\newcommand\la{\lambda}
\newcommand\C{\mathbb C}
\newcommand\p{\mathfrak p}
\newcommand{\bea}{\begin{eqnarray}}
\newcommand{\eea}{\end{eqnarray}}
\newcommand{\trei}{{x_i}}
\newcommand{\trej}{{x_j}}
\newcommand{\ci}{{u_i}}
\newcommand{\cj}{{u_j}}
\newcommand{\dduej}{{u^j}}
\newcommand{\dci}{{(-[x_\theta,u^i])}}
\begin{document}
\title{Yangians vs minimal $W$-algebras: a surprizing coincidence}
\author[Victor~G. Kac, Pierluigi M\"oseneder Frajria,  Paolo  Papi]{Victor~G. Kac\\Pierluigi M\"oseneder Frajria\\Paolo  Papi}
\begin{abstract} We prove that the  singularities of the $R$-matrix $R(k)$ of the minimal quantization of the adjoint representation of the Yangian $Y(\g)$ of a  finite dimensional simple Lie algebra $\g$ are the opposite of the roots of the monic polynomial  $p(k)$ entering in the OPE expansions of quantum  fields of conformal weight $3/2$ of the universal  minimal affine $W$-algebra at level $k$ attached to  $\g$.
\end{abstract}
\maketitle
\section{Introduction}
Let $\g$ be  a finite dimensional simple Lie algebra over $\mathbb C$, different from $sl(2)$. Let   $Y(\g)$ be  Drinfeld's Yangian associated to $\g$ and $W^k(\g,\theta)$ the universal  minimal affine $W$-algebra at level $k$. The purpose of this paper is to explain a remarkable coincidence  arising when considering, on the one hand, the minimal quantization to $Y(\g)$ of the adjoint representation of $\g$, and,  on the other hand, the OPE expansion for primary fields of  $W^k(\g,\theta)$ of conformal weight $3/2$.
To explain more precisely this coincidence we need some recollections. The algebra $Y(\g)$  is  a Hopf algebra deformation of $U(\g[t])$ which has been introduced in the famous paper \cite{Dr1} to construct solutions of  the quantum Yang-Baxter equation.
Its presentation involves  generators $X$ and $J(X), X\in \g$. In \cite[Theorem 8]{Dr1} Drinfeld explains how to quantize the adjoint representation of $\g$ to $Y(\g)$: the ``minimal'' way  of getting this quantization is to consider the space $V=\g\oplus \C$ and let the generators $X,\,X\in\g$ act in the  natural way, and the generators $J(X),\,X\in\g$ act by  \eqref{actionY}. Formula \eqref{actionY} involves a certain constant $\d$, which depends just on the choice of the bilinear invariant form on $\g$. 
Expanding on Drinfeld's work, Chari and Pressley \cite{ChPr1} studied the R-matrix $R(k)$ associated to $V$, and found that the blocks of this matrix corresponding the trivial and the adjoint isotypic components have as singularities $1$, $h^\vee/2$, and the roots $r_1,r_2$ of a degree two monic polynomial in $k$. Here and further $h^\vee$ is the dual Coxeter number of $\g$. It is implicit in their analysis that $\d=-\tfrac{1}{2} r_1r_2$. See Remark \ref{CCPP}.

Kac, Roan and Wakimoto \cite{KRW}  associated  a vertex algebra $W^{k} (\g, f)$, called a {\it universal affine W-algebra},  to each triple $(\g, f, k)$, where  $f$ is a nilpotent element of $\g$ viewed up to conjugation, and $k\in \C$, by applying the quantum Hamiltonian reduction functor  to the affine vertex algebra $V^k(\g)$.
In particular, it was shown that, for $k\ne -h^\vee$,  ${W}^{k}(\g,f)$ has  a  set of free generators, including a Virasoro vector $\omega$.  A more explicit presentation has  been obtained in \cite{KW} when $f$ is an element from the minimal non-zero nilpotent orbit. Since $e_{-\theta}$, a root vector attached to the minimal root $-\theta$, is such an element, we  will denote this vertex algebra  by $W^k(\g,\theta)$. A further improvement  has been obtained in  \cite{AKMPP}, where it has been proved that the OPE expansion of quantum fields of conformal weight $3/2$  depends on a canonical monic quadratic polynomial $p(k)$. The surprizing fact is that its roots are $-r_1$ and $-r_2$.

Our approach to the explanation is essentially Lie-theoretic, even if information coming from the structure of vertex operator algebras generated by fields of low conformal weight is  needed. Our first step is to provide a proof of Theorem 8 in \cite{Dr1} convenient for our goals. 
This is based on the analysis of certain $\g$-equivariant maps  $G_2:\bigwedge^2\g\to S^3(\g^*)$ and  $G_3:\bigwedge^3\g\to S^3(\g^*)$ (cf. \eqref{G23}), which arise naturally when considering Drinfeld's formula \eqref{actionY}. 
The crucial  Lemma \ref{DeC} has been suggested to us by 
Corrado De Concini.
Along the way  we obtain  a uniform formula for $\d$, see \eqref{finaledelta}, \eqref{altraformula}, which easily specialize to Drinfeld's expressions for $\d$ in each type of $\g$. Remark that the handier formula \eqref{finaledelta} is given in terms of the grading \eqref{gradazione} on $\g$ associated to an element from the  minimal nilpotent  orbit.

On the $W$-algebra side, we consider the  grading \eqref{gradazione}  and investigate the possible    vertex algebras  generated by fields $L$, $J^v$ with  $v\in \g^\natural$ (cf. \eqref{gnatural}), $G^u$ with $u\in \g_{-1/2}$,
with the following  $\lambda$-brackets: $L$ is a Virasoro vector with central charge $\frac{k\,\dim\g}{k+h^\vee}-6k+h^\vee-4$, $J^u$ are primary of conformal weight $1$, $G^{v}$ are primary of conformal weight $\frac{3}{2}$, the $J^u$ generate an affine vertex algebra, and no other constraints. The existence of such vertex algebras is guaranteed by \cite{KW}.
The final outcome is that imposing Jacobi identity, up to an overall  multiplicative constant, one obtains precisely the relations given by \cite{KW}: see Proposition \ref{propfinale}. Coming back to the explanation of the coincidence, one substitutes  auxiliary relations popping up in the proof of Proposition \ref{propfinale} (cf. \eqref{1}, \eqref{2}) in formula \eqref{finaledelta} to get the desired result: see Theorem \ref{finale}.

As a byproduct of our analysis, we get the following results.
\begin{enumerate}
\item {\it Uniform fomulas for $\dim\g$:}  in Proposition \ref{dim}  we get uniform formulas expressing the  dimension of $\g$ in terms of canonical data associated to the minimal grading \eqref{gradazione}.
\item {\it Application to  the Deligne exceptional series:}  in particular,  we can view the   simple Lie algebras in the Deligne exceptional series  in this framework  (cf. Remark \ref{DES}), providing  a characterization in terms of the minimal grading which yields yet another uniform derivation of the dimension formulas. 
\item {\it OPE expansions of quantum  fields of conformal weight $3/2$:} in 
Proposition \ref{propfinale}, we refine \cite[Lemma 3.1]{AKMPP} by providing a  precise  expression for the $0$-th product  in the OPE expansions of quantum  fields of conformal weight $3/2$ in $W^k(\g,\theta)$.
\end{enumerate}
\section{Yangians}
\subsection{Setup and basic relations}
Let $\g$ be a simple Lie algebra different from $sl(2)$. Fix a Cartan subalgebra $\h$  of $\g$ and  a  set $\Delta_+$  of positive roots for the $(\g,\h)$-root system $\D$. Let $\Pi$ be the corresponding set of simple roots. For $\alpha \in \Delta$ we let $\g_\a$ denote the corresponding root space. Choose a nondegenerate invariant symmetric form $(\cdot,\cdot)$ on $\g$. Denote by $\a_i,\omega_i,\theta $ the simple roots, the fundamental weights and the highest root, 
respectively. Set $\theta=\sum_in_i\a_i$. Let $\{X_\l\}_{\l\in\Lambda}$ be an orthonormal basis of $\g$.\par

As noticed in the Introduction, we will focus on the case when $\g$ is different from  $sl(2)$. We recall the definition of the Yangian in this case.
\begin{defi}[\cite{Dr1}]\label{D:YJ}
 The Yangian $Y(\g)$ is the unital associative $\C$-algebra generated by the set of elements $\{X,J(X)\,:\,X\in \g\}$ subject to the defining relations
 \begin{align}
  &XY-YX=[X,Y]_\g,  \quad J([X,Y])=[J(X),Y], \label{YJ:1}\\
  &J(cX+dY)=cJ(X)+dJ(Y),\label{YJ:2}\  \end{align}
  \begin{align}
  &[J(X),[J(Y),Z]]-[X,[J(Y),J(Z)]]=\sum_{\lambda,\mu,\nu\in \Lambda}([X,X_\la],[[Y,X_\mu],[Z,X_\nu]])\{X_\lambda,X_\mu,X_\nu\},\label{YJ:3} \end{align}
 for all $X,Y,Z,W\in \g$ and $c,d\in \C$, where $\{x_1,x_2,x_3\}=\frac{1}{24}\sum_{\pi \in \mathfrak{S}_3}x_{\pi(1)}x_{\pi(2)}x_{\pi(3)}$ for all $x_1,x_2,x_3\in Y(\g)$.

\end{defi}
\begin{rem}\label{quartanonserve}  When $\g=sl(2)$   relation \eqref{YJ:3} follows from \eqref{YJ:1} and  \eqref{YJ:2}, but  a further complicated  relation is needed: see   \cite{Dr1}, \cite[Theorem 2.6]{GRW}, \cite[3.2]{GNW} for details.
\end{rem}
%
%It can be shown that if $\g\ncong sl_2$ then the relation \eqref{YJ:4} can be omitted and the relation \eqref{YJ:3} can even be replaced with the relation 
%
%\begin{equation*}
 %[J(h),J(h^\prime)]=\frac{1}{4}\sum_{\alpha,\beta\in \Delta_+}\alpha(h)\beta(h')[x_\alpha^-x_\alpha^+,x_\thetata^-x_\thetata^+]\quad \forall \; h,h^\prime\in \h,
%\end{equation*}
%
%where 
%$\h$ denotes the Cartan subalgebra of $\g$, $\Delta_+$ denotes the set of positive roots of $\g$, and 
%for each $\alpha\in \Delta_+$ $x_\a^{\pm}\in \g_{\pm \a}$ are such that $(x_\a^+,x_\a^-)=1$. If instead $\g\cong sl_2$, then  the relation \eqref{YJ:3} can be omitted and \eqref{YJ:4} can be replaced with 
%
%\begin{equation*}
% \left[[J(e),J(f)],J(h)\right]=(fJ(e)-J(f)e)h,
%\end{equation*}
%
%where $\{e,f,h\}$ is the standard $sl_2$-triple and $(\cdot,\cdot)$ has been normalized to equal the trace form. 

%By \cite[Theorem 2]{Dr1}, $Y(\g)$ is a Hopf algebra with comultiplication $\Delta$, counit $\epsilon$, and antipode $S$ given by 
%
%\begin{equation}\label{YJ:Hopf}
%\begin{gathered}
% \Delta(X)=X\otimes 1 +1\otimes X,\; \Delta(J(X))=J(X)\otimes 1+1\otimes J(X)+\tfrac{1}{2}[X\otimes  1,\Omega] , \\
 % \epsilon(X)=\epsilon(J(X))=0,\\
% S(X)=-X,\quad S(J(x))=-J(X)+\tfrac{1}{4}c(\g) X,
%\end{gathered}
%\end{equation}
%
%where $X$ is an arbitrary element of $\g$ and  $\Omega=\sum_i a_i\otimes a^i$, $\{a_i\},\,\{a^i\}$ being a pair of dual bases of $\g$.

\subsection{Drinfeld's Theorem on the minimal quantization of the adjoint representation}
In the following we provide a uniform approach to Drinfeld's description of  the minimal quantization of the adjoint representation of $Y(\g)$.\par
The following statement  sums up the content of Theorems 7 and 8 from \cite{Dr1}. 
\begin{theorem}[Drinfeld]\label{DRTEO}\ Let $\g$ be a simple Lie algebra, different from $sl(2)$. Let $\mathcal V=\g\oplus\C$. 

(a).
There exists a unique  constant $\d\in\C$ such that
the natural action of $\g$ on $\mathcal V$ extends to an action of $Y(\g)$ by setting 
\begin{equation}\label{actionY}
J(x)(y,\a)=(\d\a x, (x,y)).
\end{equation}

(b). If  either $n_i=1$ or $n_i=(\theta,\theta)/(\a_i,\a_i)$, then the fundamental representation $V_{\omega_i}$ of $\g$ extends to a $Y(\g)$-representation by letting  $J(x)$ act as $0$.
\end{theorem}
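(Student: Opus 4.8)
For part (a) the plan is to verify directly that \eqref{actionY} defines a $Y(\g)$-module, the only substantive relation being \eqref{YJ:3}, and to read off $\d$ from that verification. Write $\Phi_x\in\mathrm{End}(\mathcal V)$ for the operator $(y,\a)\mapsto(\d\a x,(x,y))$. First I would note that, for \emph{every} $\d\in\C$, the natural $\g$-action together with $x\mapsto\Phi_x$ satisfies \eqref{YJ:1} and \eqref{YJ:2}: linearity in $x$ is clear, and a short check using invariance of $(\cdot,\cdot)$ gives $[\Phi_x,y]=\Phi_{[x,y]}$ on $\mathcal V$, so in particular $[\Phi_x,[\Phi_y,z]]=[\Phi_x,\Phi_{[y,z]}]$. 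A two-line computation yields $\Phi_a\Phi_b(w,\a)=(\d(b,w)a,\d\a(a,b))$, whence $[\Phi_a,\Phi_b](w,\a)=(\d((b,w)a-(a,w)b),0)$; consequently the whole left-hand side of \eqref{YJ:3} is \emph{linear} in $\d$.

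Next I would evaluate both sides of \eqref{YJ:3} as operators on $\mathcal V=\g\oplus\C$. On the summand $\C$ both sides vanish — for the left side via the cyclic identity $(x,[y,z])=(y,[z,x])$, for the right side because $\g$ sends $\C$ to $0$. On $\g$ the left side equals $\d\,L_\g(x,y,z)(w)$ with
\[
L_\g(x,y,z)=x\wedge[y,z]-y\wedge[x,z]+z\wedge[x,y],
\]
where $a\wedge b\in\mathfrak{so}(\g)\subset\mathrm{End}(\g)$ acts by $w\mapsto(b,w)a-(a,w)b$; one checks that $L_\g$ is totally antisymmetric in $x,y,z$, hence a \emph{nonzero} $\g$-equivariant map $\bigwedge^3\g\to\mathrm{End}(\g)$. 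On $\g$ the right side is the operator $R_\g(x,y,z)$ by which $\Omega_3(x,y,z):=\sum_{\la,\mu,\nu}([x,X_\la],[[y,X_\mu],[z,X_\nu]])\{X_\la,X_\mu,X_\nu\}$ acts in the adjoint representation; via PBW, $\Omega_3(x,y,z)$ is the symmetric tensor attached to the cubic form $v\mapsto([x,v],[[y,v],[z,v]])$ on $\g$, i.e. to $G_3(x\w y\w z)$ in the notation of \eqref{G23}. Thus \eqref{actionY} gives a $Y(\g)$-action if and only if $R_\g=\d\,L_\g$ as maps $\bigwedge^3\g\to\mathrm{End}(\g)$; since $L_\g\ne 0$, this both produces $\d$ (once the proportionality is known) and forces its uniqueness.

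The heart of the matter is therefore to prove that $R_\g$ is a scalar multiple of $L_\g$. This is \emph{not} automatic, since $\Hom_\g(\bigwedge^3\g,\mathrm{End}(\g))$ is in general more than one-dimensional; it is precisely where the crucial Lemma \ref{DeC} enters, which I expect to pin down the image of $G_3$ — equivalently the cubic form $([x,v],[[y,v],[z,v]])$ — in terms of the companion map $G_2\colon\bigwedge^2\g\to S^3(\g^*)$ that records the left-hand side of \eqref{YJ:3}, leaving only the overall scalar free. Granting this, $\d$ is computed by evaluating $L_\g$ and $R_\g$ on one well-chosen configuration: I would take $x,y,z$ among $e_{\pm\theta}\in\g_{\pm1}$ and elements of $\g^\natural$ adapted to the minimal grading \eqref{gradazione}, which collapses the sum defining $\Omega_3$ to finitely many terms and yields the uniform formula \eqref{finaledelta} (hence also \eqref{altraformula}, and Drinfeld's type-by-type values on specialization). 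The main obstacle is exactly this invariant-theoretic step — controlling $\Hom_\g(\bigwedge^3\g,\mathrm{End}(\g))$ uniformly in $\g$; note that $sl(2)$ is excluded precisely because there \eqref{YJ:3}, and with it the whole argument, degenerates (cf. Remark \ref{quartanonserve}).

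For part (b), letting $J(x)$ act by $0$ on $V=V_{\omega_i}$ makes \eqref{YJ:1} and \eqref{YJ:2} trivial and annihilates the entire left side of \eqref{YJ:3}, so the claim is exactly that $\Omega_3(x,y,z)$ acts by $0$ on $V_{\omega_i}$ for all $x,y,z$; by the description above this says that the natural map $S^3(\g^*)\to\mathrm{End}(V_{\omega_i})$ kills $\mathrm{Im}\,G_3$. I would deduce this from the special nature of these weights: when $n_i=1$ or $n_i=(\theta,\theta)/(\a_i,\a_i)$ the module $V_{\omega_i}$ is minuscule or quasi-minuscule, so its nonzero weights are very restricted, and one checks — directly on weights, or via the $\g$-module decompositions of $\mathrm{End}(V_{\omega_i})$ and of $\mathrm{Im}\,G_3\subseteq S^3(\g^*)$ — that $\mathrm{Im}\,G_3$ lies in the said kernel. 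No further argument is needed for the uniqueness of $\d$ in (a), the defining equation being linear with $L_\g\ne0$.
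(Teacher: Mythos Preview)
Your outline for (a) matches the paper's argument: both sides of \eqref{YJ:3} kill $\C$, on $\g$ the left side is $\d\,(\partial_3(x\wedge y\wedge z),\,\cdot\wedge\cdot\,)$ and the right side is $\tfrac{1}{24}(g_3(x\wedge y\wedge z),\,\cdot\wedge\cdot\,)$, so the claim reduces to $g_3=k\,\partial_3$ for a unique scalar $k$. But you attribute this last step entirely to Lemma~\ref{DeC}, and that is a genuine gap. Lemma~\ref{DeC} gives only $G_3=\tfrac13\,G_2\circ\partial_3$, hence $g_3=\tfrac13\,g_2\circ\partial_3$; this does \emph{not} by itself force proportionality to $\partial_3$, since $g_2$ could act nontrivially on $\mathrm{Im}\,\partial_3$. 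The missing piece is Lemma~\ref{costantd3}: Kostant's decomposition $\bigwedge^2\g={\bf d}\g\oplus U_2$ identifies $\mathrm{Im}\,\partial_3=U_2$, and $U_2$ is irreducible (so $g_2|_{U_2}$ is scalar by Schur) except for $sl(n)$, where $U_2=V_1\oplus V_1^*$ with $V_1\not\simeq V_1^*$ and the symmetry of $g_2$ (Lemma~\ref{inlambda22}) forces the same scalar on both summands. Your phrase that $G_2$ ``records the left-hand side of \eqref{YJ:3}'' is also off: $G_2$ has no direct tie to the left side; rather, both sides factor through $\partial_3$, and the content is the behaviour of $g_2$ on its image.

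For (b) your strategy is right in spirit but the key assertion is false: the $V_{\omega_i}$ in question are \emph{not} all minuscule or quasi-minuscule (already $V_{\omega_j}$ for $C_n$ with $3\le j\le n-1$ is neither), so a weight argument built on that premise does not go through. The paper's mechanism is sharper and again rests on the factorization through $U_2$: since $g_3^j=\tfrac13\,g_2^j\circ\partial_3$, it suffices that $\Hom_\g(U_2,\,V_{\omega_j}\otimes V_{\omega_j}^*)=0$, which is verified by checking in each case that the highest weight $2\theta-\bar\a$ of $U_2$ satisfies $2\theta-\bar\a\not\le\omega_j-w_0(\omega_j)$.
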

\begin{rem}
For $\g=sl(2)$ relation \ref{actionY} holds for any $\d$.
\end{rem}
To prove Theorem \ref{DRTEO} we need some preliminary work. 
Consider the maps $G_2:\bigwedge^2\g\to S^3(\g^*)$ and  $G_3:\bigwedge^3\g\to S^3(\g^*)$ defined by setting
\begin{equation}\label{G23}
G_2(X\wedge Y)(a)=([[X,a],a],[Y,a]),\ G_3(X\wedge Y\wedge Z)(a)=([[X,a],[Y,a]],[Z,a]).
\end{equation}
Let $\partial_p( X_1\wedge \ldots \wedge X_p)=\sum_{i<j} (-1)^{i+j+1} [X_i,X_j]\wedge X_1\wedge \ldots \wedge\widehat{X_i}\wedge\ldots\wedge\widehat{X_j}\wedge \ldots X_p$  be the usual boundary operator for the Lie algebra homology. The next lemma has been suggested to us by 
C. De Concini.

\begin{lemma}\label{DeC}\ 

\noindent(1)
$$
G_3=\tfrac{1}{3} G_2\circ \partial_3.
$$

\noindent(2) The maps $G_2,G_3$ are $\g$-equivariant.
\end{lemma}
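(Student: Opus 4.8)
The plan is to prove both statements by a direct computation, with (1) reduced to an elementary identity about eigenvalues.

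The first step is to rewrite $G_2$ and $G_3$ in terms of the operator $A:=\ad(a)$. Since $[X,a]=-AX$ one has $[[X,a],a]=A^2X$, hence
\[
G_2(X\wedge Y)(a)=(A^2X,-AY)=-(A^2X,AY)=(A^3X,Y),\qquad G_3(X\wedge Y\wedge Z)(a)=-([AX,AY],AZ),
\]
where the last equality on the left uses that $A$ is skew with respect to $(\cdot,\cdot)$ (equivalently, $\ad$-invariance of the form); in particular $G_2(X\wedge Y)(a)=(A^3X,Y)=-(X,A^3Y)$ is manifestly alternating in $X,Y$, and likewise $G_3$ is alternating in $X,Y,Z$, so the two maps are well defined on $\bigwedge^2\g$ and $\bigwedge^3\g$.

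For (1), observe that $G_3$ and $G_2\circ\partial_3$ are both linear on $\bigwedge^3\g$ and take values in $S^3(\g^*)$, whose elements are determined by their values on $\g$; hence it suffices to check $3G_3=G_2\circ\partial_3$ after evaluating at an element $a$ ranging over the (Zariski-dense) set of semisimple elements, and with $X,Y,Z$ ranging over a basis of $\g$. Fix a basis $(e_i)$ of $\g$ diagonalizing $A$, say $Ae_i=\mu_ie_i$. Invariance forces $(e_i,e_j)=0$ unless $\mu_i+\mu_j=0$, and the derivation property $A[u,v]=[Au,v]+[u,Av]$ shows that $[e_i,e_j]$ lies in the $(\mu_i+\mu_j)$-eigenspace of $A$. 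With this, one computes
\[
3G_3(e_i\wedge e_j\wedge e_k)(a)=-3\mu_i\mu_j\mu_k\,([e_i,e_j],e_k),\qquad G_2(\partial_3(e_i\wedge e_j\wedge e_k))(a)=(A^3[e_i,e_j],e_k)-(A^3[e_i,e_k],e_j)+(A^3[e_j,e_k],e_i),
\]
and a short bookkeeping shows that each term on either side vanishes unless $\mu_i+\mu_j+\mu_k=0$. When $\mu_i+\mu_j+\mu_k=0$, using the total antisymmetry of $(u,v,w)\mapsto([u,v],w)$ together with the elementary identity $\mu_i^3+\mu_j^3+\mu_k^3=3\mu_i\mu_j\mu_k$ (valid exactly when $\mu_i+\mu_j+\mu_k=0$), both sides collapse to $-3\mu_i\mu_j\mu_k\,([e_i,e_j],e_k)$, which proves (1). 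The coefficient $3$ in the cubic identity is precisely the source of the factor $\tfrac13$.

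For (2): $G_2$ and $G_3$ are assembled solely from the iterated Lie bracket and the invariant bilinear form, with no auxiliary fixed vectors, so they automatically intertwine the adjoint action on $\bigwedge^2\g$, $\bigwedge^3\g$ with the natural $\g$-action on $S^3(\g^*)$; concretely the infinitesimal invariance follows from the Jacobi identity together with the $\ad$-invariance of $(\cdot,\cdot)$. (Since $\partial_3$ is also $\g$-equivariant, (1) becomes an identity of $\g$-maps, though this is not needed for its proof.) The step I expect to be the main obstacle is the sign- and term-bookkeeping in the eigenvector computation — determining which of the several contributions survive and checking that their antisymmetries match; an alternative that avoids passing to semisimple $a$ is to expand everything in place via the Jacobi identity and invariance of the form, but that route is computationally heavier.
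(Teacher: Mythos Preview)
Your proof is correct and takes a genuinely different route from the paper's for part (1). The paper proves $G_3=\tfrac13 G_2\circ\partial_3$ by a head-on Jacobi-identity computation: it expands $[[[X,Y],a],a]$, writes $G_2\circ\partial_3-6G_3$ as $(Z,R(X,Y,a))$ for an explicit residual $R$, and then manipulates $R$ further (again via Jacobi and invariance) until it is recognized as $-G_2\circ\partial_3$ paired against $Z$, forcing $2(G_2\circ\partial_3)=6G_3$. Your argument instead rewrites everything in terms of $A=\ad(a)$, restricts to the Zariski-dense set of semisimple $a$, diagonalizes $A$, and reduces the identity to the elementary fact that $\mu_i^3+\mu_j^3+\mu_k^3=3\mu_i\mu_j\mu_k$ whenever $\mu_i+\mu_j+\mu_k=0$; this makes the origin of the factor $\tfrac13$ completely transparent and avoids the lengthy bracket bookkeeping. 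The price is the mild extra input of Zariski density of semisimple elements and the observation (worth stating a shade more carefully) that the diagonalizing basis $(e_i)$ depends on $a$, so you are really verifying the polynomial identity in $a$ pointwise on a dense set rather than on a single fixed basis. For part (2) the two arguments are essentially the same: the paper checks equivariance of $G_2$ by a short direct computation and deduces that of $G_3$ from (1) and the equivariance of $\partial_3$, which is exactly your naturality remark spelled out.
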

\begin{proof} To prove (1) we start with the Jacobi identity:
$$
[[X,Y],a],a]=[[X,a],Y],a]+[[X,[Y,a]],a]=2[[X,a],[Y,a]]+[X,[[Y,a],a]]-[Y,[[X,a],a]],
$$
so
\begin{align*}
 &G_2\circ \partial_3(X\wedge Y\wedge Z)=6G_3(X\wedge Y\wedge Z)\\
 &+([X,[[Y,a],a]]-[Y,[[X,a],a]],[Z,a])+([Z,[[X,a],a]]-[X,[[Z,a],a]],[Y,a])\\
 &+([Y,[[Z,a],a]]-[Z,[[Y,a],a]],[X,a]).
\end{align*}
Using the invariance of the form we have
$$
G_2\circ \partial_3(X\wedge Y\wedge Z)-6G_3(X\wedge Y\wedge Z)=(Z,R(X,Y,a)),
$$
where
\begin{align*}
R(X,Y,a)&= -[[X,[[Y,a],a]],a]+[[Y,[[X,a],a]],a]+[[X,a],a],[Y,a]]\\
&+[a,[a,[X,[Y,a]]]]-[a,[a,[Y,[X,a]]]]-[[[Y,a],a],[X,a]]).
\end{align*}
Since
$$
[[X,[[Y,a],a]],a]=[[X,a],[[Y,a],a]]+[X,[[[Y,a],a],a]]
$$
and
$$
[[Y,[[X,a],a]],a]=[[Y,a],[[X,a],a]]+[Y,[[[X,a],a],a]],
$$
we can rewrite $R(X,Y,a)$ as
\begin{align*}
&-[X,[[[Y,a],a],a]]+[Y,[[[X,a],a],a]]+[a,[a,[X,[Y,a]]]]-[a,[a,[Y,[X,a]]]]\\
&=-[X,[[[Y,a],a],a]]+[Y,[[[X,a],a],a]]+[a,[a,[[X,Y],a]]]].
\end{align*}
Thus
\begin{align*}
&G_2\circ \partial_3(X\wedge Y\wedge Z)-6G_3(X\wedge Y\wedge Z)=(Z,R(X,Y,a))\\
&=(Z,-[X,[[[Y,a],a],a]]+[Y,[[[X,a],a],a]]+[a,[a,[[X,Y],a]]]])\\
&=-([[[Z,X],a],a],[Y,a])-([[[Y,Z],a],a],[X,a])-([[[X,Y],a],a],[Z,a])\\
&=-G_2\circ\partial_3(X\wedge Y\wedge Z).
\end{align*}

To prove (2)  observe that
\begin{align*}
G_2(\theta(x)(X\wedge Y)(a)&=([[x,X],a],a],[Y,a]) +(([[X,a],a],[[x,Y],a])\\
&=([x,[[X,a],a],[Y,a]) +(([[X,a],a],[x,[Y,a]])\\
&+([[X,[x,a]],a],[Y,a]) +([[X,a],[x,a]],[Y,a]) +([[X,a],a],[Y,[x,a]])\\
&=([[X,[x,a]],a],[Y,a]) +([[X,a],[x,a]],[Y,a]) +([[X,a],a],[Y,[x,a]]).
\end{align*}
Since $\partial_3$ is $\g$-equivariant, it follows from (1) that $G_3$ is $\g$-equivariant.
\end{proof}

%Let $p_1\in\bigwedge^3 \g$ be the nonzero primitive invariant: $p_1(X\wedge Y\wedge Z)=(X,[Y,Z])$.

%Since $G_3(X\wedge Y\wedge Z)(a)=p_1([X,a],[Y,a],[Z,a])

 Identify $S^3(\g^*)$ and $S^3(\g)$ using the form $(\cdot,\cdot)$.
Set 
\begin{equation}
\phi_i=ad\circ Symm\circ G_i:\bigwedge^i\g\to End(\g),
\end{equation}
where $Symm:S(\g)\to U(\g)$ is the symmetrization map, $ad$ is the extension to $U(\g)$ of the adjoint representation $ad:\g\to End(\g)$.
If $X\in\wedge^i\g$ then, clearly, the map $\phi_i(X)(U,V)=(\phi_i(X)(U),V)$ is bilinear in $U,V$ so $\phi_i$ defines a map $g_i:\wedge^i\g\to\g^*\otimes \g^*$.

\begin{lemma}\label{inlambda21}The maps $g_i$ are alternating, thus they define maps $g_i:\wedge^i\g\to \wedge^2\g^*$.
\end{lemma}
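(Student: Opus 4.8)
The plan is to reduce the claim to a skew-symmetry statement about the endomorphisms $\phi_i(X)$ and then settle that statement by a parity argument. Unwinding the definitions, $g_i$ being alternating means exactly that for every $X\in\bigwedge^i\g$ the operator $\phi_i(X)\in End(\g)$ is skew-adjoint for $(\cdot,\cdot)$: since the form is symmetric and nondegenerate,
$$g_i(X)(U,V)+g_i(X)(V,U)=(\phi_i(X)U,V)+(U,\phi_i(X)V),$$
and this vanishes for all $U,V\in\g$ precisely when $\phi_i(X)^*=-\phi_i(X)$. So the first step is this reformulation; after it, only the fact that $G_i(X)\in S^3(\g)$ matters, while the precise formula for $G_i$ and the value of $i$ are irrelevant.

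The second step is the parity argument. By definition $\phi_i(X)=ad\bigl(Symm(G_i(X))\bigr)$, where $G_i(X)$ is regarded as an element of $S^3(\g)$ via the identification induced by the form. In characteristic zero $S^3(\g)$ is spanned by the cubes $u^{3}$ with $u\in\g$, and $Symm(u^{3})=u^{3}$ in $U(\g)$, so $\phi_i(X)$ is a $\C$-linear combination of operators $ad(u)^{3}$. Now invariance of $(\cdot,\cdot)$ says exactly that each $ad(u)$ is skew-adjoint; hence $ad(u)^{3}$, an odd power of a skew-adjoint operator, is again skew-adjoint, and so is $\phi_i(X)$. This proves the lemma. (One may avoid the reduction to cubes: writing $G_i(X)$ as a sum of symmetric products $u_1u_2u_3$ gives $\phi_i(X)=\sum\tfrac{1}{6}\sum_{\pi\in\mathfrak{S}_3}ad(u_{\pi(1)})ad(u_{\pi(2)})ad(u_{\pi(3)})$; passing to adjoints reverses each triple product and contributes a global sign $(-1)^{3}$, and since reversal permutes $\mathfrak{S}_3$ the symmetrized sum is negated. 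The principle behind both versions is that $ad\circ Symm$ maps $S^{\mathrm{odd}}(\g)$ into skew-adjoint endomorphisms of $\g$.)

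I do not foresee a genuine obstacle: the only points that need a little care are the identification $S^3(\g^*)\cong S^3(\g)$ and the bookkeeping of the symmetrization map, after which the statement collapses to the elementary fact that an odd symmetrized product of skew-adjoint operators is skew-adjoint. Finally, since $G_3=\tfrac{1}{3}G_2\circ\partial_3$ by Lemma \ref{DeC}(1), one has $\phi_3=\tfrac{1}{3}\phi_2\circ\partial_3$ and hence $g_3=\tfrac{1}{3}g_2\circ\partial_3$, so the case $i=3$ could also be read off from the case $i=2$; the argument above, however, disposes of both uniformly.
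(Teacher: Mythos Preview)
Your proof is correct and rests on the same mechanism as the paper's: invariance of the form makes each $ad(u)$ skew-adjoint, so $(ad(a)\,ad(b)\,ad(c))^*=-ad(c)\,ad(b)\,ad(a)$, and symmetrization over $\mathfrak{S}_3$ absorbs the order reversal, leaving only the sign $(-1)^3$. The paper carries this out in coordinates: it writes $(g_2(X\wedge Y)(U),V)$ as a triple sum over an orthonormal basis and a sum over $\sigma\in\mathfrak{S}_3$, uses invariance three times to move $U$ across (picking up $-1$ and reversing the nested brackets), and then reindexes via $\tau=\sigma\circ(13)$ to recognize the result as $-(g_2(X\wedge Y)(V),U)$; the case $i=3$ is dispatched by the relation $g_3=\tfrac{1}{3}g_2\circ\partial_3$ from Lemma~\ref{DeC}. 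Your version packages the same computation as the principle ``$ad\circ Symm$ sends $S^{\mathrm{odd}}(\g)$ to skew-adjoint operators,'' which is cleaner, handles both $i=2,3$ at once, and makes the role of the odd degree transparent; the paper's explicit index manipulation, on the other hand, matches the style of the surrounding computations (e.g.\ Lemma~\ref{inlambda22}) and avoids invoking the spanning of $S^3(\g)$ by cubes.
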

\begin{proof}
By Lemma \ref{DeC}, in order to prove the first statement, we need only to prove  that, if $U,V\in\g$, then 
$$(g_2(X\wedge Y)(U),V)=-(g_2(X\wedge Y)(V),U).$$
Explicitly
\begin{align*}
&(g_2(X\wedge Y)(U),V)=\sum_\s\sum_{p_1,p_2,p_3}([[X,a_{p_1}],a_{p_2}],[Y,a_{p_3}])\,([a^{p_{\s(1)}},[a^{p_{\s(2)}},[a^{p_{\s(3)}},U]]],V)=\\
&-\sum_\s\sum_{p_1,p_2,p_3}([[X,a_{p_1}],a_{p_2}],[Y,a_{p_3}])\,(U,[a^{p_{\s(3)}},[a^{p_{\s(2)}},[a^{p_{\s(1)}},V]]]).
\end{align*}
Set $\tau=\s\circ(13)$; then
\begin{align*}
&\sum_\s\sum_{p_1,p_2,p_3}([[X,a_{p_1}],a_{p_2}],[Y,a_{p_3}])\,([a^{p_{\s(1)}},[a^{p_{\s(2)}},[a^{p_{\s(3)}},U]]],V)=\\
&-\sum_\tau\sum_{p_1,p_2,p_3}([[X,a_{p_1}],a_{p_2}],[Y,a_{p_3}])\,([a^{p_{\tau(1)}},[a^{p_{\tau(2)}},[a^{p_{\tau(3)}},V]]],U)
\end{align*}
as required.
\end{proof}
Extend $(\cdot,\cdot)$  to an invariant bilinear form on $\wedge^2\g$ (by determinants) and identify $\wedge^2\g^*$ with $\wedge^2\g$ using this form. In particular we can view the maps $g_i$ as maps from $\wedge^i\g$ to $\wedge^2\g$.
\begin{lemma}\label{inlambda22}
The map $g_2$ is symmetric:
$$(g_2(X\wedge Y),U\wedge V)=(X\wedge Y,g_2(U\wedge V)).
$$
\end{lemma}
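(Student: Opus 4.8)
The plan is to unwind $g_2$ to an explicit sum over the orthonormal basis $\{X_\lambda\}_{\lambda\in\Lambda}$ fixed above, and then to display that sum as visibly unchanged when the pair $(X,Y)$ is swapped with the pair $(U,V)$. To abbreviate, write $\langle a,b,c\mid P,Q\rangle:=([X_a,[X_b,[X_c,P]]],Q)$ for $a,b,c\in\Lambda$ and $P,Q\in\g$. First I would repeat the computation that opens the proof of Lemma~\ref{inlambda21}; it gives (up to a fixed nonzero constant which plays no role below)
\begin{equation}\label{g2unwound}
(g_2(X\wedge Y),U\wedge V)=\sum_{\sigma\in\mathfrak S_3}\ \sum_{\lambda,\mu,\nu\in\Lambda}([[X,X_\lambda],X_\mu],[Y,X_\nu])\cdot\langle\sigma_1,\sigma_2,\sigma_3\mid U,V\rangle,
\end{equation}
where $(\sigma_1,\sigma_2,\sigma_3)$ denotes the triple $(\lambda,\mu,\nu)$ reordered by $\sigma$; the first factor comes from $G_2(X\wedge Y)\in S^3(\g^*)\cong S^3(\g)$ and the second from $ad\circ Symm$.

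The decisive step is to bring the $G_2$-factor into the same iterated-bracket shape as the second factor. Carrying the basis elements across the form one at a time, the invariance of $(\cdot,\cdot)$ yields the sign-free identity $([[X,X_\lambda],X_\mu],[Y,X_\nu])=\langle\nu,\mu,\lambda\mid X,Y\rangle$. Substituting this into \eqref{g2unwound} and relabelling the summation indices $(\lambda,\mu,\nu)\mapsto(\nu,\mu,\lambda)$ — which replaces $\sigma$ by $\sigma\circ(1\,3)$ and hence leaves the sum over $\mathfrak S_3$ unchanged — transforms \eqref{g2unwound} into $\sum_{\sigma}\sum_{\lambda,\mu,\nu}\langle\lambda,\mu,\nu\mid X,Y\rangle\,\langle\sigma_1,\sigma_2,\sigma_3\mid U,V\rangle$. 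Now both factors are values of the single symbol $\langle a,b,c\mid P,Q\rangle$, so a final reindexing of the inner sum by $\sigma^{-1}$ followed by replacing $\sigma$ by $\sigma^{-1}$ carries the permutation from the $(U,V)$-factor onto the $(X,Y)$-factor; the result equals $\sum_{\sigma}\sum_{\lambda,\mu,\nu}\langle\lambda,\mu,\nu\mid U,V\rangle\,\langle\sigma_1,\sigma_2,\sigma_3\mid X,Y\rangle$, which is precisely what \eqref{g2unwound} produces after the same two manipulations with $(X,Y)$ and $(U,V)$ interchanged, i.e. $(g_2(U\wedge V),X\wedge Y)$. Since the form on $\wedge^2\g$ is symmetric, $(g_2(U\wedge V),X\wedge Y)=(X\wedge Y,g_2(U\wedge V))$, which is the assertion. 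Conceptually, the same chain of identities shows that $(g_2(X\wedge Y),U\wedge V)$ is proportional to the pairing of $G_2(X\wedge Y)$ with $G_2(U\wedge V)$ induced by $(\cdot,\cdot)$ on $S^3(\g)$; thus $g_2$ is, up to a scalar, of the form $G_2^{*}\circ G_2$ and is automatically self-adjoint.

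I do not expect a genuine obstacle: the work is entirely bookkeeping. The two points requiring care are getting every sign right in the invariance rewriting of the $G_2$-factor, and tracking the $\mathfrak S_3$-action faithfully through the two relabellings of summation indices. One caveat: if one prefers to run the argument with an arbitrary pair of dual bases $\{a_\lambda\}$, $\{a^\lambda\}$ as in the proof of Lemma~\ref{inlambda21} rather than with an orthonormal basis, one must additionally invoke the symmetry of $\sum_\lambda a_\lambda\otimes a^\lambda\in\g\otimes\g$ in order to interchange the two tensor slots; for an orthonormal basis this is built in.
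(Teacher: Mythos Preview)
Your argument is correct and is essentially the same as the paper's: both unwind $(g_2(X\wedge Y),U\wedge V)$ as a double sum, use invariance of the form to pass between the shape $([[P,a],b],[Q,c])$ and the shape $([c,[b,[a,P]]],Q)$, and then relabel the summation indices and the $\mathfrak S_3$-variable to exhibit the $(X,Y)\leftrightarrow(U,V)$ symmetry. Your closing observation that this identifies $g_2$ (up to scalar) with $G_2^{*}\circ G_2$, hence automatically self-adjoint, is a nice conceptual gloss not made explicit in the paper.
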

\begin{proof}By unwinding all the identifications we find
\begin{align*}
(g_2(X\wedge Y),U\wedge V)&=\sum_\s\sum_{p_1,p_2,p_3}([[X,a_{p_1}],a_{p_2}],[Y,a_{p_3}])\,([a^{p_{\s(1)}},[a^{p_{\s(2)}},[a^{p_{\s(3)}},U]]],V)\\
&=\sum_\s\sum_{p_1,p_2,p_3}([a_{p_{\s^{-1}(2)}},[a_{p_{\s^{-1}(1)}},[a_{p_{\s^{-1}(3)}},X]]]],Y)\,([U,a^{p_{1}}],a^{p_{2}}],[V,a^{p_{3}}]).
\end{align*}
Set $\tau=\s^{-1}\circ(12)$; then
\begin{align*}
(g_2(X\wedge Y),U\wedge V)&=\sum_\tau\sum_{p_1,p_2,p_3}([a_{p_{\tau(1)}},[a_{p_{\tau(2)}},[a_{p_{\tau(3)}},X]]]],Y)\,([U,a^{p_{1}}],a^{p_{2}}],[V,a^{p_{3}}])\\
&=(X\wedge Y,g_2(U\wedge U)).
\end{align*}
\end{proof}

\begin{lemma}\label{costantd3}
There is a unique costant $k\in\C$ such that
\begin{equation}\label{rel3dr}
g_3=k\, \partial_3.
\end{equation}
\end{lemma}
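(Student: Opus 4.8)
\emph{Plan.} The plan is to reduce everything to the factorization
$g_3=\tfrac13\,g_2\circ\partial_3$. This is immediate from Lemma~\ref{DeC}(1): since $Symm$ and $ad$ only affect the $S^3(\g^*)$-slot, the identity $G_3=\tfrac13 G_2\circ\partial_3$ is inherited by $\phi_3$ and hence by $g_3$, all identifications ($S^3(\g^*)\cong S^3(\g)$, $\wedge^2\g^*\cong\wedge^2\g$) being $\g$-equivariant. Given this, $g_3$ is completely controlled by the restriction of $g_2$ to $\operatorname{Im}\partial_3$, which lies inside $\mathcal K:=\ker(\partial_2\colon\wedge^2\g\to\g)$ because $\partial_2\circ\partial_3=0$ (here $\partial_2$ is, up to sign, the bracket). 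So the whole lemma comes down to the single assertion that $g_2$ acts as a \emph{scalar} $3k$ on $\mathcal K$: then $g_3=k\,\partial_3$, and uniqueness of $k$ follows since $\partial_3\ne 0$ (by Whitehead's lemma $H_2(\g;\C)=0$, so $\operatorname{Im}\partial_3=\mathcal K\ne 0$ for $\g\ne sl(2)$, as $\dim\wedge^2\g>\dim\g$).

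\emph{Structure of $\wedge^2\g$.} Next I would record that $\partial_2$ is onto ($[\g,\g]=\g$) and that $\g$ occurs in $\wedge^2\g$ with multiplicity one, so that $\wedge^2\g=\g'\oplus\mathcal K$ with $\g'\cong\g$; using the invariant form on $\wedge^2\g$ one checks this is an orthogonal decomposition with the form nondegenerate on each summand. In particular $\g$ does not occur in $\mathcal K$, whence the $\g$-equivariant operator $g_2$ maps $\mathcal K$ into $\mathcal K$. The key representation-theoretic input is that $\mathcal K$ is \emph{multiplicity-free}: for every simple $\g\ne sl(2)$ it is either irreducible or — precisely in type $A_n$ — a direct sum $W\oplus W^*$ of two nonisomorphic dual irreducibles. (This can be read off the standard decompositions/plethysms of $\wedge^2(\text{adjoint})$, or from the Vogel--Deligne ``universal'' decomposition of $\g^{\otimes 2}$.)

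\emph{Conclusion.} With this in hand, $g_2$ acts on $\mathcal K$ by a scalar: by Schur it is scalar on each irreducible constituent, and in the case $\mathcal K=W\oplus W^*$ the two scalars $\lambda,\mu$ must agree, because the invariant form vanishes on the non-selfdual summand $W$ (an invariant form on $W$ would force $W\cong W^*$) and therefore pairs $W$ with $W^*$ nondegenerately, while $g_2$ is symmetric for that form by Lemma~\ref{inlambda22}: $\lambda(w,w')=(g_2w,w')=(w,g_2w')=\mu(w,w')$ for $w\in W,\ w'\in W^*$. Hence $g_2|_{\mathcal K}=3k\cdot\mathrm{id}$ and $g_3=\tfrac13 g_2\circ\partial_3=k\,\partial_3$.

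\emph{Main obstacle.} The only genuinely non-formal step is the structure of $\mathcal K=\ker(\wedge^2\g\to\g)$; everything else is bookkeeping with the already-established equivariance and symmetry of $g_2$. I expect this to be the sticking point: either one cites the type-by-type decomposition of $\wedge^2(\text{adjoint})$, or, to keep things self-contained, one computes $g_2$ directly from $G_2(X\wedge Y)(a)=([[X,a],a],[Y,a])$ — e.g.\ evaluating on wedges of root vectors and contracting with the Casimir — and reads off that it has at most two eigenvalues, one on $\g'$ and one on $\mathcal K$; this is routine but not short. A minor point worth stating explicitly is that all the identifications used to view $g_2$ as an endomorphism of $\wedge^2\g$ are $\g$-equivariant, so that $g_2$ is honestly an equivariant, self-adjoint operator on $\wedge^2\g$, as used above.
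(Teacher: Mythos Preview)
Your proposal is correct and follows essentially the same route as the paper: reduce to $g_3=\tfrac13\,g_2\circ\partial_3$, identify $\operatorname{Im}\partial_3$ with the complement $U_2=\mathcal K$ of the adjoint copy in $\wedge^2\g$ via $H_2(\g)=0$, use that $U_2$ is irreducible (or $W\oplus W^*$ in type $A$) so that the equivariant self-adjoint operator $g_2$ is scalar there. The only difference is bibliographic: the paper cites Kostant \cite{Ko} for the structure of $U_2$, whereas you point to Vogel--Deligne or a direct plethysm; either suffices, and your explicit remark on uniqueness via $\partial_3\ne 0$ is a detail the paper leaves implicit.
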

\begin{proof}
Since  $\g\ne sl(2)$, recall that by \cite{Ko} we have orthogonal decompositions
\begin{align}
\bigwedge^2\g&= {\bf d}\g\oplus U_2,\\
\label{U2}\bigwedge^3\g&=Ker\,\partial_3\oplus Im\,{\bf d}=Ker\,\partial_3\oplus {\bf d}(U_2),
\end{align}
where $\bf d$ is the Chevalley-Eilenberg differential for Lie algebra cohomology, $U_2$ is the subspace of $\bigwedge^2\g$ generated by 2-tensors $x\wedge y$ with $[x,y]=0$. \par

Moreover, again by \cite{Ko}, $Hom_\g(\g,U_2)=0$ and $U_2$ is irreducible for $\g\ne sl(n)$, while, if $\g=sl(n)$, $U_2$ decomposes as $U_2=V_1\oplus V_2$ with $V_1$, $V_2$ inequivalent  irreducibles with $V_2=V_1^*$. 

Since
$$\phi_2=ad\circ Symm\circ G_2:\bigwedge^2 \g\to End(\g)$$ is $\g$-equivariant, by the invariance of the form, $g_2$ is also equivariant. It follows that $g_2(U_2)\subset U_2$. If $\g\ne sl(n)$, then 
\begin{equation}\label{k'}(g_2)_{|U_2}=k' I\text{ for some $k'\in \C$}.\end{equation} Note that $(Im\, \partial_3)^\perp=Ker\,{\bf d}$. Since $H^2(\g)=0$, $Ker\,{\bf d}={\bf d}\g$. It follows that $Im\,\partial_3=U_2$. Since $g_3=\tfrac{1}{3}g_2\circ \partial_3$, formula \eqref{rel3dr} is proven in this case by setting 
\begin{equation}\label{kk'}k=\tfrac{k'}{3}.\end{equation}

If $\g=sl(n)$, by the same argument, we have that $g_2(V_i)\subset V_i$, hence there is $k$ such that $(g_2)_{|V_1}=kI_{V_1}$.
Let $x\in V_2$ and $y=v_1+v_2\in U_2$ with $v_i\in V_i$. Then
$$
(g_2(x),y)=(g_2(x),v_1+v_2)=(g_2(x),v_1)=(x,g_2(v_1))=k(x,v_1)=k(x,y).
$$
Since the form is nondegenerate when restricted to $U_2$,  \eqref{rel3dr} holds in this case too.
\end{proof}

\subsubsection{Proof of Theorem \ref{DRTEO}} By \eqref{YJ:1} we must have
$$
[x,J(y)](u,0)=-(0,(y,[x,u]))=(0,([x,y],u))=J([x,y])(u,0)
$$
and
$$
[x,J(y)](0,1)=(\d[x,y],0)=J([x,y])(0,1),
$$
which holds for all $\d$.
It is clear that both sides of \eqref{YJ:3} act on $(0,1)$ trivially.

%Note that, 
%$$
%[J(X),[J(Y),Z]]-[X,[J(Y),J(Z)]]=-([J(X),[J(Z),Y]]-[X,[J(Z),J(Y)]])
%$$
%and that
%$$
%[J(X),[J(Z),Y]]-[X,[J(Z),J(Y)]]=-([J(Z),[J(X),Y]]-[Z,[J(X),J(Y)]])
%$$
Define  $f:\g\times\g\times\g\to End(\g)$ by setting
$$
([J(X),[J(Y),Z]]-[X,[J(Y),J(Z)]])(U,0)=(f(X, Y, Z)(U),0).
$$
Then
\begin{align*}
(f(X, Y, Z)(U),W)&=\d(([Y,Z],U)(X,W)-(X,U)([Y,Z],W)-(Z,U)([X,Y],W))\\
&+\d((Y,U)([X,Z],W)+(Z,[X,U])(Y,W)-(Y,[X,U])(Z,W))\\
&=\d([X,Y]\wedge Z,U\wedge W)-([X,Z]\wedge Y,U\wedge W)+([Y,Z]\wedge X,U\wedge W))\\
&=\d(\partial_3(X\wedge Y\wedge Z),U\wedge W).
\end{align*}
We let the R.H.S. of \eqref{YJ:3} act on $(U,0)$: 
\begin{align*}
&\sum_{\lambda,\mu,\nu\in \Lambda}([X,X_\la],[[Y,X_\mu],[Z,X_\nu]])\{X_\lambda,X_\mu,X_\nu\}(U,0)\\
&=(\tfrac{1}{24}\sum_\s \sum_{p_1,p_2,p_3}([X,X_{p_1}],[[Y,X_{p_2}],[Z,X_{p_3}]])[X_{p_{\s(1)}},[X_{p_{\s(2)}},[X_{p_{\s(3)}},U],0)\\
&=(\tfrac{1}{24}\phi_3(X\wedge Y\wedge Z)(U),0),
\end{align*}
so we must have
$
\d(\partial_3(X\wedge Y\wedge Z),U\wedge W)=(\tfrac{1}{24}g_3(X\wedge Y\wedge Z),U\wedge W).
$
Thus, by Lemma \ref{costantd3}, relation \eqref{YJ:3} holds if and only if \begin{equation}\label{deltadis}\delta=\tfrac{k}{24}.\end{equation}
This proves claim (a) of the theorem. 
To prove  claim (b), set 
\begin{equation}
g^j_i=\rho_j\circ Symm\circ G_i:\bigwedge^i\g\to gl(V_{\omega_j}),
\end{equation}
where $\rho_j:\g\to gl(V_{\omega_j})$ is the j-th fundamental representation of $\g$.
Then, as shown in the next table,   $U_2$ does not appear in $V_{\omega_j}\otimes V_{\omega_j}^*$, since its highest weight $2\theta-\bar\alpha$ is not less than or equal than $\omega_j-w_0(\omega_j)$ (here $\bar\a$ is a simple root not orthogonal to $\theta$ and $w_0$ is the longest element in the Weyl group). In the exceptional cases we display the coordinates w.r.t. the choice of the simple roots from Bourbaki. 
\vskip5pt
\centerline{
\begin{tabular}{c|c|c}
Type of $\g$,  $j$ & $2\theta-\bar\alpha$ & $\omega_j-w_0(\omega_j)$\\\hline
$A_n,\,1\leq j\leq [n+1]/2$&$\epsilon_1+\epsilon_2-2\epsilon_{n+1}, 2\epsilon_1-\epsilon_n-\epsilon_{n+1}$&$\sum_{h=1}^j(\epsilon_h-\epsilon_{n+2-h})$\\ \hline
$B_n,\,j=1$&$2\epsilon_1+\epsilon_2+\epsilon_3$&$2\epsilon_1$\\\hline
$B_n,\,j=n$&$2\epsilon_1+\epsilon_2+\epsilon_3$&$\sum_{i=1}^n\epsilon_i$\\\hline
$C_n,\,1\leq j\leq n$&$2\epsilon_1+\epsilon_2$&$\sum_{i=1}^j\epsilon_i$\\\hline
$D_n,\,j=1$&$2\epsilon_1+\epsilon_2+\epsilon_3$&$2\epsilon_1$\\\hline
$D_n,\,j=n-1\,(n) $&$2\epsilon_1+\epsilon_2+\epsilon_3$&$\sum_{i=1}^{n-1}\epsilon_i$\\\hline
$E_6,\,j=1\,\, (6 )$&$(2,3,4,6,4,2)$&$(2,2,3,4,3,2)$\\\hline
$E_7,\,j=7$&$(3,4,6,8,6,4,2)$&$(2,3,4,6,5,4,3)$\\\hline
$F_4,\,j=4$&$(3,6,8,4)$&$(1,2,3,2)$\\\hline
$G_2,\,j=1$&$(6,3)$&$(2,1)$
\end{tabular}
}

\section{Minimal \texorpdfstring{$\tfrac{1}{2}\mathbb Z$}{-}-grading of a simple Lie algebra}\label{mingrad}

Choose root vectors $x_{\pm\theta}\in\g_{\pm\theta}$ so that $(x_\theta|x_{-\theta})=\tfrac{1}{2}$.
Set  $x=[x_\theta,x_{-\theta}]$. The eigenspace decomposition of $ad\,x$ defines the {\it minimal} $\frac{1}{2}\ganz$-grading:
\begin{equation}\label{gradazione}
\g=\g_{-1}\oplus\g_{-1/2}\oplus\g_{0}\oplus\g_{1/2}\oplus\g_{1},
\end{equation}
where $\g_{\pm 1}=\C  x_{\pm \theta}$.    Furthermore, one has
\begin{equation}\label{gnatural}
\g_0=\g^\natural\oplus \C x,\quad\g^\natural=\{a\in\g_0\mid (a|x)=0\}.
\end{equation}
Note that  $\g^\natural$ is the centralizer of the triple $\{x_{-\theta},x,x_{\theta}\}$.
We can choose $
\h^\natural=\{h\in\h\mid (h|x)=0\}
$ as a  Cartan subalgebra of $\g^\natural$,  so that $\h=\h^\natural\oplus \C x$.
Set, for $u,v\in \g_{-1/2}$,
$$
\langle u,v\rangle=(x_\theta|[u,v])
$$
and note that $\langle\cdot,\cdot\rangle$ is a $\g^\natural$--invariant symplectic form on $\g_{-1/2}$. 

We will use the following terminology.
\begin{defi} We say that an ideal in $\g^\natural$ is {\it irreducible} if it is simple or $1$-dimensional. We call such an ideal a {\it component} of $\g^\natural$.
\end{defi}

Write $$\g^\natural=\bigoplus_{i=1}^r\g^\natural_i$$ with $\g^\natural_i$ irreducible.  Recall that $r=1,2$ or $3$. For a simple Lie algebra $\aa$ we let $h^\vee_\aa$ to be its dual Coxeter number and, if $\aa$ is abelian, we set $h^\vee_\aa=0$. Set $h^\vee=h^\vee_\g$ and $h^\vee_i=h^\vee_{\g^\natural_i}$.
Let $\nu_i$ be the ratio of  the 
normalized invariant form of $\g$ restricted to $\g_i^\natural$ and the normalized form on $\g_i^\natural$. Set finally $\bar h_i^\vee=  h_i^\vee/\nu_i$.
For  reader's convenience, we display the  relevant data  in the following Table (although we proceed uniformly, so we do not need to use them).
\vskip 5pt
\centerline{\begin{tabular}{c|c|c|c|c}
$\g$&$\g^\natural$&$\g_{1/2}$&$h^\vee$&$\bar h_i^\vee$\\
\hline
$sl(3)$&$\C$&$\C\oplus \C^* $&$3$&$0$\\\hline 
$sl(n), n\geq 4$&$gl(n-2)$&$\C^{n-2}\oplus (\C^{n-2})^* $&$n$&$0,n-2$\\\hline 
$so(n), n>6, n\ne 8$&$sl(2)\oplus so(n-4)$&$\C^2\otimes\C^{n-4}$&$n-2$&$2,n-6$\\\hline
$so(8)$&$sl(2)\oplus sl(2)\oplus sl(2)$&$\C^2\otimes\C^2\otimes \C^2$&$6$&$2,2,2$\\\hline
$sp(2n), n\geq 2$&$sp(2n-2)$&$\C^{2n-2} $&$n+1$&$n$\\\hline
$G_2$&$sl(2)$&$S^3\C^2$&$4$&$4/3$\\\hline
$F_4$&$sp(6)$&$\bigwedge_0^3\C^6$ & $9$&$4$\\\hline
$E_6$&$sl(6)$&$\bigwedge^3\C^6$ & $12$&$6$\\\hline
$E_7$&$so(12)$&$spin_{12}$ & $18$&$10$\\\hline
$E_8$&$E_7$&$\dim=56$ & $30$&$18$
\end{tabular}}
\vskip10pt

Consider now the involution $\s_x=e^{2\pi \sqrt{-1} ad(x)}$. Since $\a_i(x)\ge 0$ for all simple roots $\a_i$ and $\theta(x)=1$, it follows that the set $\{1-\theta(x)\}\cup\{\a_i(x)\mid \a_i \text{ simple root}\}$ is the set of Kac parameters for the automorphism $\s_x$. In particular, since $\s_x$ is an involution, either there is a unique simple root $\a_{i_0}$ such that $\a_{i_0}(x)\ne 0$ or there are exactly two simple roots $\a_{i_0},\a_{i_1}$ such that $\a_{i_j}(x)\ne 0$. Let $s$ be the number of simple roots not orthogonal to $\theta$. If $s=1$ then $\a_{i_0}(x)= \tfrac{1}{2}$ and $n_{i_0}=2$. 
%Furthermore $\a_{i_0}$ is the unique simple root not orthogonal to $\theta$. 
If $s=2$, then   $\a_{i_0}(x)=\a_{i_1}(x)= \tfrac{1}{2}$,  $n_{i_0}=n_{i_1}=1$.
%,  and $\{\a_{i_0},\a_{i_1}\}$ is the set of simple roots not orthogonal to $\theta$.
\vskip5pt
Write $\g=\k\oplus\p$ for the eigenspace decomposition of $\s_x$.  We observe that
$$
\k=span(x_\theta,x,x_{-\theta})\oplus \g^\natural\simeq sl(2)\times \g^\natural,\quad \p=\g_{1/2}\oplus \g_{-1/2}.
$$
One can choose the set of positive roots for $\k$ so that the corresponding set of simple roots is $\{-\theta\}\cup \{\a\in\Pi\mid \a(x)=0\}= \{-\theta\}\cup \{\a\in\Pi\mid (\a|\theta)=0\}$.

Consider the case $s=1$. Then  $\g^\natural$ is semisimple and the number of simple ideals of $\g^\natural$ equals the number of roots attached to $\a_{i_0}$. Moreover $\p$ is irreducible and its highest weight as a $\k$-module is $-\a_{i_0}$. Since $\a_{i_0}(\theta^\vee)=1$, we see that 
$\p=V_{sl(2)}(\omega_1)\otimes V_{\g^\natural}(-(\a_{i_0})_{|\h^\natural})$. (Here $V_\aa(\lambda)$ denotes the irreducible finite dimensional $\aa$-module of highest weight $\l$). If $U$ is a $ad(x)$-stable space we let $U_k$ denote the eigenspace corresponding to the eigenvalue $k$.
Since $V_{sl(2)}(\omega_1)=V_{sl(2)}(\omega_1)_{1/2}\oplus V_{sl(2)}(\omega_1)_{-1/2}$ we see that, as $\g^\natural$-module,
$$
\g_{1/2}\simeq \g_{-1/2}\simeq V_{\g^\natural}(-(\a_{i_0})_{|\h^\natural}).$$
In particular $\g_{\pm1/2}$ are irreducible as $\g^\natural$-modules.

If $s=2$ we have $\g^\natural_0=\C\varpi$ with 
\begin{equation}\label{varpi}\varpi=\omega_{i_0}^\vee-\omega_{i_1}^\vee.\end{equation} Moreover $\p=V_\k(-\a_{i_0})\oplus V_{\k}(-\a_{i_1})$. Arguing as above we obtain that 
$$
\g_{1/2}\simeq \g_{-1/2}\simeq V_{\g^\natural}(-(\a_{i_0})_{|\h^\natural})\oplus V_{\g^\natural}(-(\a_{i_1})_{|\h^\natural}).$$
Since $\varpi$ acts on $V_{\g^\natural}(-(\a_{i_j})_{|\h^\natural})$ as $-(-1)^jI$ we see that $\g_{-1/2}$ is the sum of two inequivalent $\g^\natural$-modules.

As shown in \cite[Proposition 4.8]{CMPP}, $V_\k(-\a_{i_0})^*\simeq V_{\k}(-\a_{i_1})$, hence, 
$$V_{\g^\natural}(-(\a_{i_0})_{|\h^\natural})^*\simeq V_{\g^\natural}(-(\a_{i_1})_{|\h^\natural}).$$

We now turn to the study of $\wedge^2\g_{-1/2}$ as a $\g^\natural$-module.
 Let ${\bf d}, {\bf d}_\k$ be coboundary operators for the Lie algebra cohomology of $\g$, $\k$ respectively and set ${\bf d}_1=
 {\bf d}-{\bf d}_\k$. 
 By \cite[Prop. 4.3]{P}, $\wedge^2\p={\bf d}_1\k\oplus V'$, where
 \begin{equation}\label{v'}
 V'=span(u\wedge v\mid u,v\in\p,\ [u,v]=0).
 \end{equation} We observe that $\wedge^2\g_{-1/2}=(\wedge^2\p)_{-1}$, so 
 $$
 \wedge^2\g_{-1/2}={\bf d}_1(\k)_{-1}\oplus V'_{-1}.
 $$
 As $\k=span(x_\theta,x,x_{-\theta})\oplus \g^\natural$, we see that ${\bf d}_1(\k)_{-1}=\C {\bf d}_1( x_{-\theta})\simeq V_{\g^\natural}(0)$. Set $\Pi_j=\{\a\in \Pi\mid (\a,\a_{i_j})\ne 0\}\cup\{-\theta\}$, $\mathcal W'_j=\{-\a_{i_j}-s_{\a_{i_j}}(\a)\mid \a\in\Pi_j\}$, and 
 $$\mathcal W'=\begin{cases}\mathcal W'_0\quad&\text{if $s=1$,}\\
 \mathcal W'_0\cup \mathcal W'_1\quad&\text{if $s=2$ and $(\a_{i_0},\a_{i_1})\ne 0$,}\\
 \mathcal W'_0\cup \mathcal W'_1\cup \{-\a_{i_0}-\a_{i_1}\}\quad&\text{if $s=2$ and $(\a_{i_0},\a_{i_1})=0$.}
 \end{cases}
 $$ Recall from \cite{CMP} that 
$$
 V'=\oplus_{\l\in\mathcal W'}V_\k(\l)
$$
 and that, if $\l=-\a_{i_j}-s_{\a_{i_j}}(\a)$, then the highest weight vector is $x_{-\a_{i_j}}\wedge x_{-s_{\a_{i_j}}(\a)}$. 
 Set explicitly $\{\l_1,\ldots,\l_p\}=\{\l\in\mathcal W'\mid \l(x)=-1\}$, so that,  the $(-1)$-eigenspace of $ad(x)$ is
 $$
V'_{-1}=\bigoplus_{i=1}^pV_{\g^\natural}({\l_i}_{|\h^\natural}).
 $$
 %and the highest weight vector is $x_{-\a_{i_j}}\wedge x_{-s_{\a_{i_j}}(\a)}$.
 
 Consider the map $\Phi:\bigwedge^2 \g_{-1/2}\to S^2((\g^\natural)^*)$ defined by 
\begin{equation}\label{Phi}
\Phi(u\wedge v)(a)=\langle[u,a],[v,a]\rangle.
\end{equation}
It is easy to check that the map $\Phi$ is $\g^\natural$-equivariant. 
\begin{prop}\label{Phinontrivial} Let $P: \bigwedge^2 \g_{-1/2}\to S^2((\g^\natural)^*)$ be a  $\g^\natural$-equivariant map. Then there are constants $f_1,\ldots,f_p$ such that 
$$P_{|V_{\g^\natural}(\l_i)}=f_i \ \Phi_{|V_{\g^\natural}(\l_i)}$$
for $1\leq i\leq p$.\end{prop}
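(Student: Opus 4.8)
The plan is to reduce the statement to Schur's lemma via the $\g^\natural$-module decomposition
$$\bigwedge^2\g_{-1/2}=V_{\g^\natural}(0)\oplus\bigoplus_{i=1}^pV_{\g^\natural}({\l_i}_{|\h^\natural})$$
established above, in which $V_{\g^\natural}(0)=\C\,{\bf d}_1(x_{-\theta})$, the remaining summands exhaust $V'_{-1}$, and $V_{\g^\natural}({\l_i}_{|\h^\natural})$ has highest weight vector $v_i=x_{-\a_{i_j}}\wedge x_{-s_{\a_{i_j}}(\a)}$ when $\l_i=-\a_{i_j}-s_{\a_{i_j}}(\a)$. Fix $i$. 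Since $P$ and $\Phi$ are $\g^\natural$-equivariant, the restrictions $P_{|V_{\g^\natural}(\l_i)}$ and $\Phi_{|V_{\g^\natural}(\l_i)}$ lie in $\Hom_{\g^\natural}(V_{\g^\natural}(\l_i),S^2((\g^\natural)^*))$, whose dimension equals, by Schur's lemma, the multiplicity $m_i$ of $V_{\g^\natural}(\l_i)$ in $S^2((\g^\natural)^*)$. Hence it suffices to prove: \textbf{(i)} $m_i\le1$ for every $i$; and \textbf{(ii)} if $m_i=1$ then $\Phi_{|V_{\g^\natural}(\l_i)}\ne0$. Granting these, $\Hom_{\g^\natural}(V_{\g^\natural}(\l_i),S^2((\g^\natural)^*))$ is either zero (then $P_{|V_{\g^\natural}(\l_i)}=\Phi_{|V_{\g^\natural}(\l_i)}=0$ and $f_i$ is arbitrary) or one-dimensional and spanned by $\Phi_{|V_{\g^\natural}(\l_i)}$, and in both cases $P_{|V_{\g^\natural}(\l_i)}=f_i\,\Phi_{|V_{\g^\natural}(\l_i)}$ for a suitable scalar $f_i$.

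For \textbf{(ii)} I would evaluate $\Phi$ on $v_i$: this produces the quadratic form $a\mapsto\langle[x_{-\a_{i_j}},a],[x_{-s_{\a_{i_j}}(\a)},a]\rangle$ on $\g^\natural$, which is a highest weight vector of $S^2((\g^\natural)^*)$ of weight ${\l_i}_{|\h^\natural}$, or zero, and the task is to check that it is nonzero whenever $m_i=1$. Note that one cannot detect this on the Cartan: for $a\in\h^\natural$ the form equals $\a_{i_j}(a)\,s_{\a_{i_j}}(\a)(a)\,\langle x_{-\a_{i_j}},x_{-s_{\a_{i_j}}(\a)}\rangle$, and $\langle x_{-\a_{i_j}},x_{-s_{\a_{i_j}}(\a)}\rangle=(x_\theta|[x_{-\a_{i_j}},x_{-s_{\a_{i_j}}(\a)}])=0$ since $v_i\in V'\subseteq\ker\partial_2$. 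So one must evaluate $\Phi(v_i)$ on root vectors of $\g^\natural$; using the explicit form of $\mathcal W'$ and the root structure of $\g^\natural$ one exhibits an $a$ with $\Phi(v_i)(a)\ne0$ (equivalently, one checks that the degree-two polynomial $\Phi(v_i)$ is not identically zero).

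The main obstacle is \textbf{(i)}. Identifying $S^2((\g^\natural)^*)\cong S^2(\g^\natural)$ through the invariant form and writing $\g^\natural=\bigoplus_{j=1}^r\g^\natural_j$, one has $S^2(\g^\natural)=\bigoplus_jS^2(\g^\natural_j)\oplus\bigoplus_{j<k}\g^\natural_j\otimes\g^\natural_k$, and the point is that the dominant weight ${\l_i}_{|\h^\natural}$ occurs at most once among the highest weights of the constituents of this module. Two observations should keep this in check. First, central characters constrain things strongly: whenever $\g^\natural$ has a one-dimensional ideal $\C z$, many of the $V_{\g^\natural}(\l_i)$ carry a nonzero $z$-weight, and since $z$ acts trivially on $\g^\natural$ it acts trivially on $S^2(\g^\natural)$; hence $m_i=0$ for those $i$ (this is precisely the situation in which $\Phi$ vanishes on $V_{\g^\natural}(\l_i)$). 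Second, $\g_{-1/2}$ is a very small $\g^\natural$-module --- minuscule or close to it in every case, as the Table shows --- so each $V_{\g^\natural}(\l_i)$ has a small highest weight, which one matches against the known decomposition of $S^2$ of the adjoint of each simple component of $\g^\natural$ together with the tensor products $\g^\natural_j\otimes\g^\natural_k$. Carrying out this last comparison is where a short case analysis guided by the Table and by the description of $\mathcal W'$ seems hard to avoid, but in each instance $m_i$ is readily seen to be $0$ or $1$.
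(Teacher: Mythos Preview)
Your strategy is exactly the paper's: reduce to Schur's lemma via the multiplicity bound \textbf{(i)} and the nonvanishing of $\Phi$ in \textbf{(ii)}. The one substantive difference is in how \textbf{(i)} is established. Rather than a case analysis guided by the Table, the paper invokes Reeder's general theorem \cite[Proposition 2.1]{R} that $S^2(\aa)$ is multiplicity-free for any simple Lie algebra $\aa$; combined with the obvious decomposition $S^2(\g^\natural)=\bigoplus_jS^2(\g^\natural_j)\oplus\bigoplus_{j<k}\g^\natural_j\otimes\g^\natural_k$ this gives $m_i\le1$ without any type-by-type work. For \textbf{(ii)} the paper carries out precisely the root-theoretic computation you sketch: when $\g^\natural$ is semisimple it produces, for each highest weight vector $v_i=x_{-\a_{i_j}}\wedge x_{-\a_{i_j}-\a}$, an explicit element $a\in\g^\natural$ (a sum of two root vectors) with $\Phi(v_i)(a)\ne0$, thereby showing $\Phi$ is injective on $V'_{-1}$; when $\g^\natural$ has a one-dimensional center your central-character observation is used verbatim to kill all summands except $V_{\g^\natural}((-\a_{i_0}-\a_{i_1})_{|\h^\natural})$, on which $\Phi$ is then checked directly. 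So your outline is correct; the only shortcut you are missing is the citation to Reeder, which replaces your proposed case-by-case verification of \textbf{(i)} by a uniform argument.
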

\begin{proof} By \cite[Proposition 2.1]{R}, $S^2(\g^\natural)$ decomposes with multiplicity one. Since the same happens to $V'$ (cf. \eqref{v'}), it suffices to prove that if $P_{|V_{\g^\natural}(\l_i)}$ is nonzero then 
$\Phi_{|V_{\g^\natural}(\l_i)}$ is nonzero.

Assume that $\g^\natural$ is semisimple. In this case we prove that $Ker \Phi=\{0\}$. It is enough to check that $\Phi(x_{-\a_{i_j}}\wedge x_{-s_{\a_{i_j}}(\a)})\ne 0$ for all $\a\in \Pi_j$ such that $(\a_{i_j}+s_{\a_{i_j}}(\a))(x)=1$. Since $\a_{i_j}(x)=1/2$ and $\a(x)=0$, we see that $s_{\a_{i_j}}(\a)=\a+\a_{i_j}$.
It follows that
\begin{equation}\label{a}
\Phi(x_{-\a_{i_j}}\wedge x_{-s_{\a_{i_j}}(\a)})(a)=\langle[x_{-\a_{i_j}},a],[x_{-\a_{i_j}-\a},a]\rangle.
\end{equation}

Assume first that there is a unique simple root $\a_{i_0}$ not orthogonal to $\theta$. 
If  $\a_{i_0}$ is a short root then $x_{-\theta+2\a_{i_0}}\in\g^\natural$. 
Since $-\theta+\a_{i_0}-\a,-\a_{i_0}+\a$ are not  roots, we obtain, taking $a=x_{-\theta+2\a_{i_0}}+x_{\a}$,
\begin{align*}
\langle[x_{-\a_{i_0}},a],[x_{-\a_{i_0}-\a},a]\rangle&=\langle[x_{-\a_{i_0}},x_{-\theta+2\a_{i_0}}],[x_{-\a_{i_0}-\a},x_{\a}]\rangle\ne0.
\end{align*}

We can therefore assume that $\a_{i_0}$ is a long root. 
Assume  $\a$ short. The fact that $\a_{i_0}$ is long implies that $(\a|\a_{i_0})=-1$, so 
$$
(\theta-\a_{i_0}-\a|\a)=(-\a_{i_0}-\a|\a)=1-(\a|\a)\ge0.
$$
Since $\theta-\a_{i_0}=\theta-\a_{i_0}-\a+\a$ is a root, it follows that $\theta-\a_{i_0}-2\a$ is a positive root. Since
$$
(\theta-\a_{i_0}-2\a|\a_{i_0})=1-2+2=1,
$$
$\theta-2\a_{i_0}-2\a$ is a positive root as well.
We choose $a=x_{-\a}+x_{-\theta+2\a_{i_0}+2\a}$, so 
\begin{align*}
\langle[x_{-\a_{i_0}},a],[x_{-\a_{i_0}-\a},a]\rangle&=\langle[x_{-\a_{i_0}},x_{-\a}],[x_{-\a_{i_0}-\a},x_{-\theta+2\a_{i_0}+2\a}]\rangle\\
&+\langle[x_{-\a_{i_0}},x_{-\theta+2\a_{i_0}+2\a}],[x_{-\a_{i_0}-\a},x_{-\a}]\rangle\\
&=\langle[x_{-\a_{i_0}},x_{-\a}],[x_{-\a_{i_0}-\a},x_{-\theta+2\a_{i_0}+2\a}]\rangle\\
&+\langle x_{-\a_{i_0}},[x_{-\theta+2\a_{i_0}+2\a},[x_{-\a_{i_0}-\a},x_{-\a}]]\rangle\\
&=2\langle[x_{-\a_{i_0}},x_{-\a}],[x_{-\a_{i_0}-\a},x_{-\theta+2\a_{i_0}+2\a}]\rangle\\
&+\langle x_{-\a_{i_0}},[x_{-\a_{i_0}-\a},[x_{-\theta+2\a_{i_0}+2\a},x_{-\a}]]\rangle.
\end{align*}
If  $-\theta+2\a_{i_0}+\a$ is a root, then
$$
(-\theta+2\a_{i_0}+\a|\a_{i_0})=-1+4-1=2,
$$
contradicting the fact that $\a_{i_0}$ is long. It follows that $[x_{-\theta+2\a_{i_0}+2\a},x_{-\a}]=0$, hence
$$
\langle[x_{-\a_{i_0}},a],[x_{-\a_{i_0}-\a},a]\rangle=2\langle[x_{-\a_{i_0}},x_{-\a}],[x_{-\a_{i_0}-\a},x_{-\theta+2\a_{i_0}+2\a}]\rangle\ne0.
$$

We can therefore assume that $\a$ is long. Let $\Sigma$ be the component of the Dynkin diagram of $\g^\natural$ containing $\a$. 
Let $\theta_\Sigma$ be its highest root.
If $\a=\theta_\Sigma$, then $\g^\natural$ is not simple, for, otherwise, $\g\simeq sl(3)$. Let $\Sigma'$ be the Dynkin diagram of another component of $\g^\natural$. Then $\a+\a_{i_0}+\theta_{\Sigma'}$ is a positive root as well as $\theta-\a-\a_{i_0}-\theta_{\Sigma'}$. Since $\a_{i_0}$ is a long root, $(\theta_{\Sigma'}|\a_{i_0})=-1$, hence
$$
(\theta-\a-\a_{i_0}-\theta_{\Sigma'}|\a_{i_0})=1+1-2+1=1,
$$
so that $\theta-\a-2\a_{i_0}-\theta_{\Sigma'}$ is a positive root. We choose $a=x_{-\theta_{\Sigma'}}+x_{-\theta+\a+2\a_{i_0}+\theta_{\Sigma'}}$.

Arguing as above, we conclude that 
$$
\langle[x_{-\a_{i_0}},a],[x_{-\a_{i_0}-\a},a]\rangle=2\langle[x_{-\a_{i_0}},x_{-\theta_{\Sigma'}}],[x_{-\a_{i_0}-\a},x_{-\theta+\a+2\a_{i_0}+\theta_{\Sigma'}}]\rangle\ne0.
$$

We can therefore assume both $\a$ and $\a_{i_0}$ long roots and $\a\ne\theta_\Sigma$.
Since $(\a_{i_0}|\a)<0$, we have that $\a_{i_0}+\theta_\Sigma$ is a root. Since $(\theta|\a_{i_0}+\theta_\Sigma)=(\theta|\a_{i_0})>0$, we see that $\theta-\a_{i_0}-\theta_\Sigma$ is a positive root.

Since $\a_{i_0}$ is long, by Lemma 5.7 of \cite{CMP}, $\a$ has coefficient $1$ in $\theta_\Sigma$.  We now prove that \begin{equation}\label{ee} (\theta_\Sigma,\a)=0.\end{equation}
Indeed, if \eqref{ee} holds, $(\theta-\a_{i_0}-\theta_\Sigma,\a)=1$ so  $\theta-\a_{i_0}-\a-\theta_\Sigma$ is a root. Since 
 $(\theta-\a_{i_j}-\theta_\Sigma-\a, \a_{i_0})=1-2+1+1=1>0$, we obtain that $\theta-2\a_{i_0}-\theta_\Sigma-\a$ is a positive root. Choosing $a=x_{-\theta_{\Sigma}}+x_{-\theta+\a+2\a_{i_0}+\theta_{\Sigma}}$ and arguing as above, we conclude that 
$$
\langle[x_{-\a_{i_0}},a],[x_{-\a_{i_0}-\a},a]\rangle=2\langle[x_{-\a_{i_0}},x_{-\theta_{\Sigma}}],[x_{-\a_{i_0}-\a},x_{-\theta+\a+2\a_{i_0}+\theta_{\Sigma}}]\rangle\ne0.
$$

 To prove \eqref{ee} we need to use definitions and results from \cite{CMPP}.  If \eqref{ee} does not hold, then $\a\notin A(\Sigma)$
 (see \cite[Definition 4.1]{CMPP} for the definition of $A(\Sigma)$). Then $A(\Sigma)=\widehat \Pi\setminus \Sigma$ and by  \cite[Proposition 4.2]{CMPP}, $\theta-\theta_\Sigma$  is supported outside  $\Sigma$ so
 $$\theta=\sum_{\eta\notin \Sigma, \eta\ne \a_{i_0}}n_\eta \eta +2\a_{i_0}+\theta_\Sigma.$$
This contradicts the fact that $\theta-\a-\a_{i_0}$ is a root since $\a\ne\theta_\Sigma$ and the coefficient of $\a$ in $\theta_\Sigma$ is $1$.

We are left with the case when $\g^\natural$ ha a 1-dimensional center generated by $\varpi$ (cf. \eqref{varpi}). We can assume that $(\a_{i_0}|\a_{i_1})=0$. If  this is not the case, then $\g=sl(3)$ and $V'=\{0\}$.
Suppose  $\sigma$ is a  simple roots not orthogonal  to $\a_{i_0}$. Then $(-2\a_{i_0}-\sigma)(\varpi)=-2$. In particular $V_{\g^\natural}((-2\a_{i_0}-\sigma)_{|\h^\natural})$ does not occur in $S^2(\g^\natural)$.
It follows that $P_{|V_{\g^\natural}((-2\a_{i_0}-\sigma)_{|\h^\natural})}=\Phi_{|V_{\g^\natural}((-2\a_{i_0}-\sigma)_{|\h^\natural})}=0$. Note that the highest weight vector of 
It remains only to check that $\Phi_{|V_{\g^\natural}((-\a_{i_0}-\a_{i_1})_{|\h^\natural}))}$ is nonzero. 
$V_{\g^\natural}((-\a_{i_0}-\a_{i_1}))$ is $x_{-\a_{i_0}}\wedge x_{-\a_{i_1}}$. We need to find $a$ such that 
$$\langle [x_{-\a_{i_0}},a],[x_{-\a_{i_1}},a]\rangle\ne 0.$$
Let $\Sigma$ be a  component of $\g^\natural$ attached to $\a_{i_0}$, so that $\a_{i_0}+\theta_\Sigma, \theta-\a_{i_0}-\theta_\Sigma$ are both roots.  Since $(\theta_\Sigma|\a_{i_1})\le0$, hence
$(\theta-\a_{i_0}-\theta_\Sigma|\a_{i_1})>0$ and in turn $\theta-\a_{i_0}-\theta_\Sigma-\a_{i_1}$ is a positive root or zero. Choosing $a=x_{-\theta_\Sigma}+x_{-\theta+\a_{i_0}+\theta_\Sigma+\a_{i_1}}$ in the first case and $a=x_{-\theta_\Sigma}+h_{\theta_\Sigma}$ in the second we are done.
 \end{proof}
\section{Calculation of \texorpdfstring{$\delta$}{d}}

 We note that for computing $\d$  it is enough to compute 
\begin{equation}\label{enough}
(g_2(a\wedge b),c\wedge d),\qquad a,b,c,d\in\g,
\end{equation}
where  $[a,b]=[c,d]=0$ and $(a\wedge b,c\wedge d)\ne0$.

Indeed, recall from \eqref{k'} that there is a constant $k'$ such that $g_2(a\wedge b)=k' a\wedge b$.
%and  that $g_3(a\wedge b)=\tfrac{k'}{3}\partial_3(a\wedge b)$, so
By  \eqref{kk'} and \eqref{deltadis} we obtain
\begin{equation}\label{howc}
\delta=\tfrac{k'}{72}=\frac{(g_2(a\wedge b),c\wedge d)}{72(a\wedge b,c\wedge d)}.
\end{equation}
We first use \eqref{howc} to determine the dependence of $\delta$ from the choice of the form $(\cdot,\cdot)$. Let us write $\d$ as $\delta^{(\cdot,\cdot)}$ to emphasize this dependence.
\begin{lemma}\label{depform}
$$\delta^{s(\cdot,\cdot)}=\frac{1}{s^3}\delta^{(\cdot,\cdot)}.
$$
\end{lemma}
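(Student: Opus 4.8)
The plan is to read off the behaviour of $\d$ directly from formula \eqref{howc},
$$\d^{(\cdot,\cdot)}=\frac{(g_2(a\wedge b),c\wedge d)}{72\,(a\wedge b,c\wedge d)},\qquad [a,b]=[c,d]=0,\ (a\wedge b,c\wedge d)\neq 0,$$
tracking separately how its numerator and its denominator transform when $(\cdot,\cdot)$ is replaced by $s(\cdot,\cdot)$. First I would note that the same $a,b,c,d$ serve for both forms: the conditions $[a,b]=[c,d]=0$ do not involve the form, and since the form on $\bigwedge^2\g$ is defined by determinants one has $(a\wedge b,c\wedge d)=(a,c)(b,d)-(a,d)(b,c)$, so $(a\wedge b,c\wedge d)\neq 0$ is preserved and the denominator is simply multiplied by $s^2$.

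Next I would determine the scaling of the numerator from the explicit formula for $(g_2(X\wedge Y),U\wedge V)$ obtained in the proof of Lemma \ref{inlambda22},
$$(g_2(X\wedge Y),U\wedge V)=\sum_\s\sum_{p_1,p_2,p_3}([[X,a_{p_1}],a_{p_2}],[Y,a_{p_3}])\,([a^{p_{\s(1)}},[a^{p_{\s(2)}},[a^{p_{\s(3)}},U]]],V),$$
where $\{a_p\}$ is a basis of $\g$ and $\{a^p\}$ its dual basis with respect to the form. Rescaling the form by $s$ and keeping $\{a_p\}$ fixed turns $\{a^p\}$ into $\{s^{-1}a^p\}$, so the iterated bracket $[a^{p_{\s(1)}},[a^{p_{\s(2)}},[a^{p_{\s(3)}},U]]]$ is divided by $s^3$ while each of the two invariant pairings written out acquires a factor $s$; hence the numerator is multiplied by $s\cdot s\cdot s^{-3}=s^{-1}$. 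Combining the two, $\d^{s(\cdot,\cdot)}=(s^{-1}/s^2)\,\d^{(\cdot,\cdot)}=s^{-3}\d^{(\cdot,\cdot)}$.

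This is essentially bookkeeping, and the only delicate point --- the ``hard part'', such as it is --- is the exponent count in the numerator: one must remember that the dual basis $\{a^p\}$, equivalently the Casimir-type tensor $\sum_p a_p\otimes a^p$, carries one \emph{inverse} power of the form, on top of the two pairings that are visibly present. As a sanity check I would also recover the exponent $-3$ from the relation $\d\,(\partial_3(X\wedge Y\wedge Z),U\wedge W)=\tfrac1{24}(g_3(X\wedge Y\wedge Z),U\wedge W)$ isolated in the proof of Theorem \ref{DRTEO}: its left-hand side, computed via \eqref{actionY}, is $\d$ times a quantity quadratic in the form, while its right-hand side --- the action on $(U,0)$ of the right-hand side of \eqref{YJ:3}, paired with $W$, where the orthonormal basis $\{X_\l\}$ must be rescaled to $\{X_\l/\sqrt s\}$ for $s(\cdot,\cdot)$ --- scales by $s^{-1}$; equating the two forces $\d$ to scale by $s^{-3}$.
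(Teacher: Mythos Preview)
Your argument is correct and is essentially the same as the paper's: both proofs unwind the explicit formula for $(g_2(a\wedge b),c\wedge d)$ in terms of a basis and its dual, and count the factors of $s$ coming from the two pairings and the three dual-basis elements. Your additional sanity check via the relation $\d\,(\partial_3(X\wedge Y\wedge Z),U\wedge W)=\tfrac1{24}(g_3(X\wedge Y\wedge Z),U\wedge W)$ is a nice independent confirmation, but the core computation is identical to the paper's.
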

\begin{proof}
Let us write $g_2$ as $g_2^{(\cdot,\cdot)}$ to emphasize the dependence on $(\cdot,\cdot)$.
Then,  
$$
s^2(g_2^{s(\cdot,\cdot)}(a\wedge b),c\wedge d)=\sum_{t_1,t_2,t_3,\s}s([[a,x_{t_1}],x_{t_2}],[b,x_{t_3}])s([\tfrac{x^{t_{\s(1)}}}{s},[\tfrac{x^{t_{\s(2)}}}{s},[\tfrac{x^{t_{\s(3)}}}{s},c]]],d)
$$
hence
$$
(g_2^{s(\cdot,\cdot)}(a\wedge b),c\wedge d)=\frac{1}{s^3}(g_2^{(\cdot,\cdot)}(a\wedge b),c\wedge d).
$$
It follows that
$$
\delta^{s(\cdot,\cdot)}=\frac{s^2(g_2^{s(\cdot,\cdot)}(a\wedge b),c\wedge d)}{72s^2(a\wedge b,c\wedge d)}=\frac{1}{s^3}\frac{(g_2^{(\cdot,\cdot)}(a\wedge b),c\wedge d)}{72(a\wedge b,c\wedge d)}=\frac{1}{s^3}\d^{(\cdot,\cdot)}.
$$
\end{proof}

Lemma \ref{depform} allows us to choose   as invariant form   the normalized one, that we denote by $(\cdot|\cdot)$. Recall that the normalized invariant form is defined by setting $(\theta|\theta)=2$. From now on by $\d$ we mean $\d^{(\cdot|\cdot)}$.

Choose a simple root $\a_{i_0}$ such that $(\theta|\a_{i_0})\ne 0$ ($\a_{i_0}$  is unique up to type $A$).
% in which case there are two simple roots $\a_{i_0}, \a_{i_1}$). Set $\gamma_j=\theta-\a_{i_j}$. Choose root vectors $e_{i_j}, f_{i_j}$,  in $\g_{\a_{i_j}}$, 
%$j=0,1$ such that $(e_{i_j}| f_{i_j})=\frac{2}{(\a_{i_j},\a_{i_j})}$ and set $x_{\gamma_j}=[x_\theta,f_{i_j}]$ and $x_ {-\gamma_j}=[x_{-\theta},e_{i_j}]$. Set 
%$$\gamma=\gamma_0,\qquad \gamma'=\begin{cases}\gamma_1\quad&\text{in type $A$}\\\gamma\quad&\text{otherwise}.\end{cases}$$
 Set $\gamma=\theta-\a_{i_0}$. Choose root vectors $e_{i_0}, f_{i_0}$,  in $\g_{\pm\a_{i_0}}$
such that $(e_{i_0}| f_{i_0})=\frac{2}{(\a_{i_0},\a_{i_0})}$ and set $x_{\gamma}=[x_\theta,f_{i_0}]$, $x_ {-\gamma}=[x_{-\theta},e_{i_0}]$.

We compute $\delta$ by specializing \eqref{howc} to the case where $(\cdot,\cdot)=(\cdot|\cdot)$ and $a=x_{-\theta},b=x_{-\gamma}, c=x_{\theta}, d=x_{\gamma}$ so that
\begin{equation}\label{g}
\delta=\frac{(g_2(x_{-\theta}\wedge x_{-\gamma})|x_{\theta}\wedge x_ {\gamma})}{72(x_{-\theta}\wedge x_{-\gamma}|x_{\theta}\wedge x_ {\gamma})}=\frac{(g_2(x_{-\theta}\wedge x_{-\gamma})|x_{\theta}\wedge x_ {\gamma})}{36(x_{-\gamma}|x_{\gamma})}.
\end{equation}

We choose a basis $\{u_i\}$ of $\g_{-1/2}$ and let $\{u^i\}$ be its dual basis (i. e. $\langle u_i,u^j\rangle=\delta_{ij}$). We also choose an orthonormal basis $\{x_i\}$ of $\g^\natural$. Then, as basis of $\g$, we can choose 
\begin{equation}\label{basisg}
\{x_\theta\}\cup\{[x_\theta,u_i]\}\cup\{x_i\}\cup\{x\}\cup\{u_i\}\cup\{x_{-\theta}\}.
\end{equation}
The corresponding dual basis (w.r.t. $(\cdot|\cdot)$) is
\begin{equation}\label{dualbasisg}
\{2x_{-\theta}\}\cup\{u^i\}\cup\{x_i\}\cup\{2x\}\cup\{-[x_\theta,u^i]\}\cup\{2x_\theta\}.
\end{equation}
We choose an orthonormal basis $\{x_i^r\}$ for each component $\g^\natural_r$ so that we can set $\{x_i\}=\cup_r\{x_i^r\}$.

Let $C_{\g^\natural}=\sum_i (x_i)^2$ be the Casimir element of $\g^\natural$ and $C_{\g_0}$ the Casimir element of $\g_0$. Since $C_{\g^\natural}=C_{\g_0}-2x^2$ by Lemma 5.1 of \cite{KW} we have  that
\begin{equation}
\sum_i[x_i,[x_i,v]]=(h^\vee-\tfrac{3}{2})v.\label{casgm}
\end{equation}
Recall also that it follows from  Lemma 5.1 of \cite{KW} that
\begin{equation}
\dim \g_{-1/2}=\dim \g_{1/2}=2h^\vee-4.\label{dimgm}
\end{equation}
We  extend $\langle\cdot,\cdot\rangle$ on $\wedge^2\g_{-1/2}$ by determinants:
$$
\langle u\wedge v,w\wedge z\rangle=\langle u,w\rangle\langle v,z\rangle-\langle u,z\rangle\langle v,w\rangle.
$$
We collect various formulas in the following lemma.
\begin{lemma}\label{lemma} If $u,v,w,z\in \g_{-1/2}$ and $a\in \g^\natural_r$, then 
\begin{align}
\sum_{p}[[x^s_p,v],[a,x^s_p]]&=\d_{rs}(\bar h_r^\vee)[v,a],\label{F1}\\
\sum_{p}[x^s_p,[v,[a,x^s_p]]]&=-\d_{rs}(\bar h_r^\vee)[v,a],\label{F2}\\
\sum_{p}[x_p,[a,[v,x_p]]]&=(h^\vee-3/2-\bar h_r^\vee)[v,a],\label{F3}\\
[[x_\theta,u],[x_\theta,v]]&=-\langle u,v\rangle x_\theta,\label{F4bis}\\
[u,v]&=2\langle u,v\rangle x_{-\theta},\label{F4}\\
[[x_\theta,u],v]&=\sum_i\langle u,[v,x_i]\rangle x_i+\langle u,v\rangle x,\label{F5}\\
[v,[x_\theta,u]]&=\sum_i\langle [v,x_i],u\rangle x_i+\langle v,u\rangle x,\label{F6}\\
\sum_i[u,x_i]\wedge [v,x_i]&=-\frac{\langle u,v\rangle}{2}\sum_{r} u_r\wedge u^r-\tfrac{1}{2}u\wedge v,\label{F8}\\
\sum_{i,j}\langle [u,x_i],[v,x_j]&\rangle\langle[w,x_i],[z,x_j]\rangle-\sum_{i,j}\langle [w,x_i],[v,x_j]\rangle\langle[u,x_i],[z,x_j]\rangle\notag\\
&=
(h^\vee-1)\langle u,w\rangle\langle v,z\rangle+\tfrac{1}{4}\langle u\wedge w,v\wedge z\rangle,\label{F7}
\end{align}
    \end{lemma}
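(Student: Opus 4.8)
The plan is to prove all of \eqref{F1}--\eqref{F7} by reducing each identity to structural facts about the minimal grading already assembled in Section \ref{mingrad}, together with the two Casimir identities \eqref{casgm} and \eqref{dimgm}. The guiding principle is that $\g^\natural$ acts on $\g_{-1/2}$ preserving the symplectic form $\langle\cdot,\cdot\rangle$, so for $v,a$ with $a\in\g^\natural_r$ the expressions on the left-hand sides of \eqref{F1}--\eqref{F3} are, term by term, elements of $\g_{-1/2}$, and each left-hand side is a $\g^\natural$-equivariant bilinear map in $(v,a)$. For \eqref{F1} I would first check that $\sum_p[[x^s_p,v],[a,x^s_p]]=0$ when $r\ne s$ (because $[a,x^s_p]\in\g^\natural_s$ while the sum over $p$ produces something that, paired against $\g_{-1/2}$, must be $\g^\natural$-equivariant and supported on the wrong component) and, for $r=s$, recognize the sum as a Casimir-type operator on $\g^\natural_r$ acting in the representation $\g_{-1/2}$; the constant $\bar h^\vee_r$ is exactly the ratio $h^\vee_r/\nu_r$ computing the Casimir eigenvalue relative to the normalized form of $\g$, which is what the definition of $\bar h^\vee_r$ was set up to record. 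Identity \eqref{F2} then follows from \eqref{F1} by the Jacobi identity and the invariance of $\langle\cdot,\cdot\rangle$, and \eqref{F3} follows from \eqref{F1}, \eqref{F2} and \eqref{casgm}: writing $\sum_p[x_p,[a,[v,x_p]]]$ and expanding $[a,[v,x_p]]=[[a,v],x_p]+[v,[a,x_p]]$, one sees the total $\g^\natural$-Casimir $\sum_i[x_i,[x_i,\cdot]]=(h^\vee-3/2)$ split as the $\g^\natural_r$-part (giving $\bar h^\vee_r$ by \eqref{F1}--\eqref{F2}) plus the complementary part.

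Identities \eqref{F4bis}--\eqref{F6} are the most elementary: they are just the $\g^\natural$-module decomposition $\g_0=\g^\natural\oplus\C x$ and $\g_{\pm1}=\C x_{\pm\theta}$ applied to the brackets $[u,v]\in\g_{-1}$, $[[x_\theta,u],[x_\theta,v]]\in\g_1$, $[[x_\theta,u],v]\in\g_0$. For each, I would expand the unknown element in the relevant basis, pair against the dual basis from \eqref{dualbasisg}, and use $(x_\theta|x_{-\theta})=\tfrac12$, $(x|x)=\tfrac12$, together with the definition $\langle u,v\rangle=(x_\theta|[u,v])$; invariance of the form turns each coefficient into a $\langle\cdot,\cdot\rangle$-pairing. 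The normalization constants ($2$ in \eqref{F4}, $\tfrac12$ hidden in \eqref{F8}) all come from the factor-of-$2$ discrepancy between $\{x_{\pm\theta}\}$ and their duals $\{2x_{\mp\theta}\}$ and the choice $(x_\theta|x_{-\theta})=\tfrac12$.

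Identity \eqref{F8} is where the symplectic geometry enters: the left-hand side $\sum_i[u,x_i]\wedge[v,x_i]$ lives in $\wedge^2\g_{-1/2}$, and I would compute its $\langle\cdot,\cdot\rangle$-pairing against an arbitrary $w\wedge z$, which produces $\sum_i\big(\langle[u,x_i],w\rangle\langle[v,x_i],z\rangle-\langle[u,x_i],z\rangle\langle[v,x_i],w\rangle\big)$; using $\langle[u,x_i],w\rangle=-\langle u,[x_i,w]\rangle$ and the fact that $\{x_i\}$ is orthonormal for the form restricted to $\g^\natural$, this collapses via a completeness (resolution-of-identity) argument on $\g^\natural$ combined with \eqref{casgm}. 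The right-hand side's two pieces then correspond to the invariant symplectic form $\sum_r u_r\wedge u^r$ (the ``trace'' part coming from $\langle u,v\rangle$ times the identity) and the ``traceless'' residual $u\wedge v$. Finally, \eqref{F7} is the deepest and I expect it to be the main obstacle: it is a purely $\g^\natural$-invariant quartic identity on $\g_{-1/2}$, and I would prove it by pairing both antisymmetrized double sums, applying \eqref{F8} twice (once in each $x_i$ and $x_j$ sum) to turn the quadruple contraction into bilinears, then using \eqref{F4bis}--\eqref{F6} to evaluate the resulting brackets landing in $\g_0$ and $\g_{\pm1}$; the appearance of $h^\vee-1$ and $\tfrac14$ on the right will come from collecting a $\dim\g_{-1/2}=2h^\vee-4$ term from \eqref{dimgm} (contributing $(2h^\vee-4)\cdot\tfrac14=\tfrac{h^\vee-2}{2}$ type factors) against the $\langle u,v\rangle$-traceful pieces, with the bookkeeping of which $\g^\natural_r$-component each index runs over being the place most likely to hide a sign or factor error. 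The safest route for \eqref{F7} is to treat both sides as $\g^\natural$-equivariant maps $S^2(\g_{-1/2})\to\C$ (by symmetry in the pairs) or rather into the multiplicity-one decomposition invoked in Proposition \ref{Phinontrivial}, reducing the identity to checking it on one highest-weight vector per component — but since a uniform verification is wanted, I would instead push the direct computation through, using \eqref{F8} as the key contraction lemma.
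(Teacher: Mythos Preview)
Your outline for \eqref{F1}--\eqref{F6} and for \eqref{F7} is essentially what the paper does: the $r\ne s$ cases of \eqref{F1}--\eqref{F2} are trivial since $[\g^\natural_r,\g^\natural_s]=0$, the $r=s$ case is the Casimir computation you describe, \eqref{F3} follows by Jacobi plus \eqref{casgm}, \eqref{F4bis}--\eqref{F6} are indeed bookkeeping with the bases \eqref{basisg}--\eqref{dualbasisg}, and \eqref{F7} is obtained exactly by applying \eqref{F8} to the $i$-sum, then to the $j$-sum, with one use of \eqref{casgm} for the scalar trace.

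The gap is in your plan for \eqref{F8}. Your ``completeness plus \eqref{casgm}'' sketch only gives you the eigenvalue of $C_{\g^\natural}$ on $\g_{-1/2}$, not on $\wedge^2\g_{-1/2}$. Concretely, the paper first rewrites
\[
\sum_i[u,x_i]\wedge[v,x_i]=\tfrac12\bigl(C_{\g^\natural}(u\wedge v)-C_{\g^\natural}u\wedge v-u\wedge C_{\g^\natural}v\bigr),
\]
and then needs the eigenvalue of $C_{\g^\natural}$ on the subspace $V_2=\mathrm{span}\{w\wedge z:[w,z]=0\}$. This is not accessible from \eqref{casgm} alone: the paper gets it by invoking Panyushev's result \cite{P} that the full $\k$-Casimir $C_{sl(2)}+C_{\g^\natural}$ acts on $V'\subset\wedge^2\p$ by $2h^\vee$, and then computing $C_{sl(2)}(w\wedge z)=4\,w\wedge z$ directly for $[w,z]=0$, yielding $C_{\g^\natural}(w\wedge z)=(2h^\vee-4)\,w\wedge z$ on $V_2$. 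Only after this does the right-hand side of \eqref{F8} fall out. If you try instead to pair against $w\wedge z$ and use completeness as you propose, you land on an expression like
\[
\bigl([[x_\theta,u],w]^\natural\,\big|\,[[x_\theta,v],z]^\natural\bigr)-\bigl([[x_\theta,u],z]^\natural\,\big|\,[[x_\theta,v],w]^\natural\bigr),
\]
and evaluating this uniformly still requires knowing how $C_{\g^\natural}$ acts on the tensor square, i.e.\ the same missing input. So either cite \cite{P} at this step, or produce an independent computation of that Casimir eigenvalue; the rest of your plan then goes through.
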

\begin{proof}If $r\ne s$, then \eqref{F1} and \eqref{F2} are obvious. If $r=s$ then, on one hand,
\begin{align*}
\sum_{p}[x^r_p,[v,[a,x^r_p]]]&=\sum_{p,j}[x^r_p,[v,([a,x^r_p]|x_j^r)x_j^r]]=-\sum_{p,j}[x^r_p,[v,(x^r_p|[a,x_j^r])x_j^r]]=\\
&=-\sum_{j}[[a,x_j^r],[v,x_j^r]]=-\sum_{p}[[x^r_p,v],[a,x^r_p]].
\end{align*}
On the other hand 
\begin{align*}
\sum_{p}[x^r_p,[v,[a,x^r_p]]]&=\sum_{p}[[x^r_p,v],[a,x^r_p]]+\sum_{p}[v,[x^r_p,[a,x^r_p]]]\\&=\sum_{p}[[x^r_p,v],[a,x^r_p]]-2(\bar h_r^\vee)[v,a],
\end{align*}
so
$$
\sum_{p}[[x^r_p,v],[a,x^r_p]]-2(\bar h_r^\vee)[v,a]=-\sum_{p}[[x^r_p,v],[a,x^r_p]],
$$
hence
$$
\sum_{p}[[x^r_p,v],[a,x^r_p]]=(\bar h_r^\vee)[v,a],
\quad
\sum_{p}[x^r_p,[v,[a,x^r_p]]]=-(\bar h_r^\vee)[v,a],
$$
and
$$
\sum_{p}[x_p,[a,[v,x_p]]]=\sum_{p}[x_p,[[a,v],x_p]]]+\sum_{p}[x^r_p,[v,[a,x^r_p]]]=(h^\vee-3/2-\bar h_r^\vee)[v,a].
$$
This proves \eqref{F1}, \eqref{F2}, and \eqref{F3}.
Formulas \eqref{F4bis}, \eqref{F4}, \eqref{F5}, \eqref{F6} are straightforward.

We now prove \eqref{F8}. Note
that
$\g_{-1/2}\wedge \g_{-1/2}=\C\sum_i u_i\wedge u^i\oplus V_2$ with $V_2=span(u\wedge v\mid [u,v]=0)$. If $u,v\in\g_{-1/2}$, then the corresponding decomposition of $u\wedge v$  is
$$
u\wedge v=\frac{\langle u,v\rangle}{\dim \g_{1/2}}\sum_i u_i\wedge u^i+s, \quad s\in V_2.
$$
If $w,z\in\g_{-1/2}$ then
$$
\sum_n [w,x_n]\wedge [z,x_n]=\tfrac{1}{2}(C_{\g^\natural}(w\wedge z)-C_{\g^\natural}w\wedge z-w\wedge C_{\g^\natural}z).
$$

 Assume now 
$[w, z]=0$ and set $C_{sl(2)}=2(x^2+x_{\theta}x_{-\theta}+x_{-\theta}x_{\theta})$ to be the Casimir element of $span(x_\theta,x,x_{-\theta})\simeq sl(2)$.
By \cite{P},   $(C_{sl(2)}+C_{\g^\natural})(w\wedge z)=2h^\vee (w\wedge z)$. 
Since
$$
C_{sl(2)}(w\wedge z)=C_{sl(2)}w\wedge z+w\wedge C_{sl(2)}z+4xw\wedge xz,
$$
$C_{sl(2)}w=\tfrac{3}{2}w$, and $4xw\wedge xz=w\wedge z$, we obtain
$C_{sl(2)}(w\wedge z)=4w\wedge z$ so that  
$
C_{\g^\natural}(w\wedge z)=(2h^\vee-4)w\wedge z
$. Using  \eqref{casgm},  the final outcome is that
$$
\sum_n [w,x_n]\wedge [z,x_n]=\tfrac{1}{2}(2h^\vee-4-2(h^\vee-3/2))(w\wedge z)=-\tfrac{1}{2}(w\wedge z),
$$
By \eqref{dimgm}, \eqref{casgm}, and the above formula applied to $s$, we find
$$
\sum_i[u,x_i]\wedge [v,x_i]=-\frac{\langle u,v\rangle}{2}\frac{2h^\vee-3}{2h^\vee-4}\sum_{r} u_r\wedge u^r-\tfrac{1}{2}(u\wedge v-\frac{\langle u,v\rangle}{2h^\vee-4}\sum_i u_i\wedge u^i)
$$
hence \eqref{F8}.

Finally, we prove \eqref{F7}:
\begin{align*}
&\sum_i\langle[u,x_i]\wedge [w,x_i],v\wedge z)=-\frac{\langle u,w\rangle}{2}\sum_r(\langle u_r,v\rangle\langle u^r,z\rangle-\langle u^r,v\rangle\langle u_r,z\rangle)-\tfrac{1}{2}\langle u\wedge w,v\wedge z\rangle\\
&=-\langle u,w\rangle\langle v,z\rangle-\tfrac{1}{2}\langle u\wedge w,v\wedge z\rangle.
\end{align*}
It follows that
\begin{align*}
&\sum_{i,j}\langle [u,x_i],[v,x_j]\rangle\langle[w,x_i],[z,x_j]\rangle-\sum_{i,j}\langle [w,x_i],[v,x_j]\rangle\langle[u,x_i],[z,x_j]\rangle\\
&=
-\sum_j(\langle u,w\rangle\langle [v,x_j],[z,x_j]\rangle-\tfrac{1}{2}\sum_j\langle u\wedge w,[v,x_j]\wedge [z,x_j]\rangle\\
&=
(h^\vee-3/2)\langle u,w\rangle\langle v,z\rangle-\tfrac{1}{2}\sum_j\langle u\wedge w,[v,x_j]\wedge[z,x_j]\rangle\\
&=
(h^\vee-1)\langle u,w\rangle\langle v,z\rangle+\tfrac{1}{4}\langle u\wedge w,v\wedge z\rangle,
\end{align*}
which is precisely \eqref{F7}.
\end{proof}

We choose the basis $\{a_i\}$ for $\g$ to be the basis displayed in \eqref{basisg}. Then the dual basis   $\{a^i\}$  is the basis given in  \eqref{dualbasisg}.
We start our computation of $\d$ by computing  
\begin{equation}\label{G2}
G_2(x_{-\theta}\wedge x_{-\gamma})=\sum_{i,j,k}([[x_{-\theta},a_i],a_j]|[x_{-\gamma},a_k])a^ia^ja^k.
\end{equation}

We have to compute expressions of type $([[x_{-\theta},b_1],b_2]|[x_{-\gamma},b_3])$ with $b_i\in\g_{d_i}$. Such an expression can be non-zero only if  $d_3-1/2=1-d_1-d_2$ i. e. $d_1+d_2+d_3=3/2$. The possibilities are

\vskip20pt

\centerline{
\begin{tabular}{c|c|c|||c|c|c|||c|c|c}
$d_1$& $d_2$& $d_3$&$d_1$& $d_2$& $d_3$&$d_1$& $d_2$& $d_3$\\
\hline
-1/2&1&1&0&1/2& 1&0& 1&1/2\\
%13
1/2& 0& 1&1/2& 1/2& 1/2&1/2& 1& 0\\
%21
1& -1/2& 1&1& 0& 1/2&1& 1/2& 0\\
%24
1& 1& -1/2\\
%31
\end{tabular}
}
\medskip
For each of the cases above let $S(d_1,d_2,d_3)$ be the corresponding summand in the expression of $G_2(x_{-\theta}\wedge x_{-\gamma})$  given in \eqref{G2}. Then direct computations yield
\begin{align*}
S(-1/2,1,1)&=4\sum_i([[x_{-\theta},\ci],x_{\theta}]|[x_{-\gamma},x_{\theta}])\dci x_{-\theta}^2=0,\\
%%15
S(0,1/2,1)&=2\sum_{j}\langle \cj,x_{-\gamma}\rangle\cj xx_{-\theta}=2xx_{-\gamma} x_{-\theta},\\
%%16
S(0,1,1/2)&=2\sum_j\langle \cj,x_{-\gamma}\rangle\dduej xx_\theta=2xx_{-\gamma} x_{-\theta},\\
%%17
S(1/2,0,1)& =\sum_i[x_i, x_{-\gamma}]\trei x_{-\theta}+x_{-\gamma} xx_{-\theta},\\
%%18
S(1/2,1/2,1/2)&=-\tfrac{1}{2}\sum_{j,s}[\cj,x_s] u^j[x_s,x_{-\gamma}] -\tfrac{1}{4}\sum_j\cj u^j x_{-\gamma},\\
%%19
S(1/2,1,0)&=\sum_i[x_i, x_{-\gamma}]\trei x_{-\theta}-xx_{-\gamma} x_{-\theta},\notag\\
%%20
S(1,-1/2,1)&=-2[x_\theta,x_{-\gamma}]x_{-\theta}^2=-e_{i_0}x_{-\theta}^2,\\
%%21
S(1,0,1/2)&=0,\\
%%22
S(1,1/2,0)&=\sum_i[x_i, x_{-\gamma}]\trei x_{-\theta}-xx_{-\gamma} x_{-\theta},\notag\\
%%23
S(1,1,-1/2)&=4\sum_j\langle x_{-\gamma},\cj\rangle[x_\theta,u^j] x_{-\theta}^2=-4[x_\theta,x_{-\gamma}] x_{-\theta}^2=-2e_{i_0}x_{-\theta}^2.
%%24
\end{align*}
Summing up we find
\begin{align*}
G_2(x_{-\theta}\wedge x_{-\gamma})&=3xx_{-\gamma} x_{-\theta} 
 +3\sum_i[x_i, x_{-\gamma}]\trei x_{-\theta} -3e_{i_0}x_{-\theta}^2\\
&-\tfrac{1}{2}\sum_{j,s}[\cj,x_s]u^j[x_s,x_{-\gamma}] -\tfrac{1}{4}\sum_ju_ju^jx_{-\gamma}.
\end{align*}
Note that 
$\sum_ju_ju^j ([x_\theta,u])=\langle u,u\rangle=0,
$
so
\begin{align*}
G_2(x_{-\theta}\wedge x_{-\gamma})&=3xx_{-\gamma} x_{-\theta} 
 +3\sum_i[x_i, x_{-\gamma}]\trei x_{-\theta} -3e_{i_0}x_{-\theta}^2
-\tfrac{1}{2}\sum_{j,r}[\cj,x_r]u^j[x_r,x_{-\gamma}].
\end{align*}
 We now compute
 $g_2(x_{-\theta}\wedge x_{-\gamma})|x_{\theta}\wedge x_ {\gamma})=((Symm(G_2(x_{-\theta}\wedge x_{-\gamma}))(x_{\theta})|x_{\gamma}).$ We start by computing 
 $
\sum_{j,r}( (Symm([\cj,x_r]u^j[x_r,x_{-\gamma}]  )(x_{\theta})|x_{\gamma}).
$
For this we note that, if $u,v,w\in\g_{-1/2}$,
\begin{align*}
([u&,[ v,[w,x_{\theta}]]]|x_\gamma)=([ v,[x_{\theta},w]]]|[u,x_\gamma])=\sum_i\langle v,[w,\trei ]\rangle\langle u,[f_{i_0},\trei]\rangle+\tfrac{1}{2}\langle v,w\rangle\langle u,f_{i_0}\rangle,
\end{align*}
so, letting $\gamma_s$  denote the eigenvalue of $C_{\g^\natural_s}$ on $\g_{-1/2}$,
\begin{align*}
&\sum_{r,j}([[\cj,x_r],[u^j,[[x_r,x_{-\gamma}] ,x_{\theta}]]]|x_{\gamma})\\
&=\sum_{i,j,r}\langle u^j,[[x_r,x_{-\gamma}],\trei ]\rangle\langle [\cj,x_r],[f_{i_0},\trei]\rangle+\tfrac{1}{2}\sum_{j,r}\langle u^j,[x_r,x_{-\gamma}]\rangle\langle [\cj,x_r],f_{i_0}\rangle\\
&=\sum_{i,r}\langle [x_r,[f_{i_0},\trei ]],[[x_r,x_{-\gamma}],\trei ]\rangle+\tfrac{1}{2}\sum_{r}\langle [x_r,f_{i_0}],[x_r,x_{-\gamma}]\rangle\\
&=\sum_{i,r,s}\langle[\trei, [x^s_r,[f_{i_0},\trei ]]],[x^s_r,x_{-\gamma}]\rangle-\tfrac{1}{2}\sum_{r}\langle [x_r,[x_r,f_{i_0}],x_{-\gamma}]\rangle\\
&=\sum_{s}(h^\vee-3/2-\bar h_s^\vee)\sum_r\langle [f_{i_0},x^s_r ],[x^s_r,x_{-\gamma}]\rangle-\tfrac{1}{2}(h^\vee-3/2)\langle f_{i_0},x_{-\gamma}\rangle\\
&=\sum_s(h^\vee-3/2-\bar h_s^\vee)\gamma_s\langle f_{i_0},x_{-\gamma}\rangle-\tfrac{1}{2}(h^\vee-3/2)\langle f_{i_0},x_{-\gamma}\rangle\\
&=\sum_s(h^\vee-2-\bar h_s^\vee)\gamma_s\langle f_{i_0},x_{-\gamma}\rangle=\sum_s(h^\vee-2-\bar h_s^\vee)\gamma_s( x_\gamma|x_{-\gamma}).
\end{align*}
Similarly
\begin{align*}
\sum_{r,j}([[\cj,x_r], [[x_r,x_{-\gamma}],[ u^j,x_{\theta}]]]|x_{\gamma})
&=\sum_s(h^\vee-1-\bar h_s^\vee)\gamma_s( x_\gamma|x_{-\gamma}),\\
\sum_{r,j}([u^j,[ [\cj,x_r],[[x_r,x_{-\gamma} ],x_{\theta}]]]|x_{\gamma})&=\sum_s(h^\vee-2-\bar h_s^\vee)\gamma_s( x_\gamma|x_{-\gamma}),\\
\sum_{r,j}([u^j,[[x_r,x_{-\gamma}],[ [\cj,x_r],x_{\theta}]]]|x_\gamma)&=\sum_s(h^\vee-1-\bar h_1^\vee)\gamma_s( x_\gamma|x_{-\gamma}).
\end{align*}
Finally
\begin{align*}
\sum_{r,j}([[x_r,x_{-\gamma}]&,[ u^j,[[\cj,x_r],x_{\theta}]]]|x_{\gamma})+\sum_{r,j}([[x_r,x_{-\gamma}],[ [\cj,x_r],[u^j ,x_{\theta}]]]|x_{\gamma})\\
&=\sum_{i,j,r}\langle u^j,[[\cj,x_r],\trei ]\rangle\langle  [x_r,x_{-\gamma}],[f_{i_0},\trei]\rangle+\tfrac{1}{2}\sum_{j,r}\langle u^j ,[\cj,x_r]\rangle\langle [x_r,x_{-\gamma}] ,f_{i_0}\rangle\\
&+\sum_{i,j,r}\langle [\cj,x_r],[u^j,\trei ]\rangle\langle  [x_r,x_{-\gamma}],[f_{i_0},\trei]\rangle+\tfrac{1}{2}\sum_{j,r}\langle [\cj,x_r],u^j\rangle\langle [x_r,x_{-\gamma}] ,f_{i_0}\rangle\\
&=2\sum_{i,j,r}\langle [\cj,x_r],[u^j,\trei ]\rangle\langle  [x_r,x_{-\gamma}],[f_{i_0},\trei]\rangle.
\end{align*}
Recall from \eqref{F7} that
\begin{align*}
&\sum_{i,j}\langle [u,x_i],[v,x_j]\rangle\langle[w,x_i],[z,x_j]\rangle-\sum_{i,j}\langle [w,x_i],[v,x_j]\rangle\langle[u,x_i],[z,x_j]\rangle\\
&=
(h^\vee-1)\langle u,w\rangle\langle v,z\rangle+\tfrac{1}{4}\langle u\wedge w,v\wedge z\rangle.
\end{align*}
so
\begin{align*}
&\sum_{r,j}([[x_r,x_{-\gamma}],[ u^j,[[\cj,x_r],x_{\theta}]]]|x_{\gamma})+\sum_{r,j}([[x_r,x_{-\gamma}],[ [\cj,x_r],[u^j ,x_{\theta}]]]|x_{\gamma})\\
&=2\sum_{i,j,r}\langle [\cj,x_r],[u^j,\trei ]\rangle\langle  [x_r,x_{-\gamma}],[f_{i_0},\trei]\rangle\\
&=
-2 (h^\vee-1)\sum_j\langle \cj,x_{-\gamma}\rangle\langle u^j,f_{i_0}\rangle-\tfrac{1}{2}\sum_j\langle \cj\wedge x_{-\gamma},u^j\wedge f_{i_0}\rangle\\
&+2\sum_{i,j,r}\langle [x_{-\gamma},x_r],[u^j,\trei ]\rangle\langle  [x_r,\cj],[f_{i_0},\trei]\rangle\\
&=
-2(h^\vee-1)\langle x_{-\gamma},f_{i_0}\rangle-\tfrac{1}{2}\sum_j\langle \cj,u^j\rangle\langle x_{-\gamma},f_{i_0}]\rangle+\tfrac{1}{2}\sum_j\langle \cj,f_{i_0}\rangle\langle x_{-\gamma},u^j \rangle\\
&+2\sum_{i,r,s}\langle [x_{-\gamma},x^s_r],[\trei , [x^s_r,[f_{i_0},\trei]]]\rangle\\
&=
(2(h^\vee-1)+\tfrac{1}{2}(\dim \g_{1/2}-1))( x_\gamma|x_{-\gamma})+2\sum_s(h^\vee-3/2-\bar h_s^\vee)\sum_{r}\langle[x_{-\gamma},x^s_r],[f_{i_0},x^s_r]\rangle\\
&=
2(h^\vee-1)+\tfrac{1}{2}(2h^\vee-4-1))( x_\gamma|x_{-\gamma})+2\sum_s(h^\vee-3/2-\bar h_s^\vee)\gamma_s( x_\gamma|x_{-\gamma})\\
&=
2\sum_s(h^\vee-\bar h_s^\vee)\gamma_s( x_\gamma|x_{-\gamma}).
\end{align*}
The final outcome is that
\begin{align*}
\sum_{j,r}( (Symm([\cj,x_r]u^j[x_r,x_{-\gamma}]  )(x_{\theta})|x_{\gamma})=
6\sum_s(h^\vee-1-\bar h_s^\vee)\gamma_s( x_\gamma|x_{-\gamma}).
\end{align*}
The other terms are easier:
\begin{align*}
&(Symm(xx_{-\gamma} x_{-\theta})(x_{\theta})|x_{\gamma})=-3/4( x_\gamma|x_{-\gamma}),\\
&(Symm(e_{i_0}x_{-\theta}^2)(x_{\theta})|x_{\gamma})=3( x_\gamma|x_{-\gamma}),\\
&\sum_j(Symm([x_j, x_{-\gamma}]\trej x_{-\theta})(x_{\theta})|x_{\gamma})=-3/2(h^\vee-3/2)( x_\gamma|x_{-\gamma}).
\end{align*}
Summing up
$$(g_2(x_\theta\wedge x_\gamma)|x_{-\theta}\wedge x_{-\gamma})=(-3\sum_s(h^\vee+1/2-\bar h_s^\vee)\gamma_s-45/4)( x_\gamma|x_{-\gamma}).
$$

Using  \eqref{g} we obtain the following result
\begin{proposition}
\begin{equation}\label{finaledelta}
\delta=-\frac{12\sum_s(h^\vee+1/2-\bar h^\vee_s)\gamma_s+45}{144}.
\end{equation}
\end{proposition}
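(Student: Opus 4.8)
The plan is to obtain $\delta$ by feeding into \eqref{g} the value of the pairing assembled in the computations above. By the symmetry of $g_2$ established in Lemma \ref{inlambda22},
\begin{equation*}
(g_2(x_{-\theta}\wedge x_{-\gamma})|x_\theta\wedge x_\gamma)=(g_2(x_\theta\wedge x_\gamma)|x_{-\theta}\wedge x_{-\gamma})=\Bigl(-3\sum_s(h^\vee+\tfrac{1}{2}-\bar h_s^\vee)\gamma_s-\tfrac{45}{4}\Bigr)(x_\gamma|x_{-\gamma}),
\end{equation*}
so that the numerator of \eqref{g} is exactly the quantity summed up at the end of the preceding paragraph. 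Since the form is symmetric, $(x_\gamma|x_{-\gamma})=(x_{-\gamma}|x_\gamma)$ cancels against the denominator $36(x_{-\gamma}|x_\gamma)$ of \eqref{g}, leaving
\begin{equation*}
\delta=\frac{1}{36}\Bigl(-3\sum_s(h^\vee+\tfrac{1}{2}-\bar h_s^\vee)\gamma_s-\tfrac{45}{4}\Bigr)=-\frac{12\sum_s(h^\vee+\tfrac{1}{2}-\bar h_s^\vee)\gamma_s+45}{144},
\end{equation*}
which is \eqref{finaledelta}.

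The content of the argument of course lies in the preceding computation, which I would carry out in two stages. In the first stage one expands $G_2(x_{-\theta}\wedge x_{-\gamma})$ as in \eqref{G2} over the basis \eqref{basisg} and its dual \eqref{dualbasisg}, groups the triple sum according to the admissible triples of $\ad x$-eigenvalues $(d_1,d_2,d_3)$ with $d_1+d_2+d_3=3/2$, and evaluates each summand $S(d_1,d_2,d_3)$ using the bracket identities collected in Lemma \ref{lemma} (two of the ten summands, $S(-1/2,1,1)$ and $S(1,0,1/2)$, vanish, and the remaining $\sum_j u_j u^j$-term drops out because $\langle\cdot,\cdot\rangle$ is alternating). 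In the second stage one applies $Symm$, pairs with $x_\theta$ and then with $x_\gamma$, and collects terms; the three easier symmetrized triple products reduce to multiples of $(x_\gamma|x_{-\gamma})$ via \eqref{casgm}.

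The step I expect to be the main obstacle is the contribution of $\sum_{j,r}(Symm([u_j,x_r]u^j[x_r,x_{-\gamma}])(x_\theta)|x_\gamma)$. After distributing $Symm$ into its six orderings, one must repeatedly move brackets across the symplectic form by its $\g^\natural$-invariance, rewrite the sums quadratic in structure constants as multiples of $(x_\gamma|x_{-\gamma})$ through the component-wise Casimir eigenvalues $\gamma_s$ of $C_{\g^\natural_s}$ on $\g_{-1/2}$ (using \eqref{F1}--\eqref{F3}), and finally apply the quartic identity \eqref{F7} to the symmetrized triple bracket. The delicate point is keeping the bookkeeping of the $\gamma_s$ and the shifts $\bar h^\vee_s$ consistent across all six orderings, since a sign or coefficient slip there propagates directly into the final value of $\delta$.
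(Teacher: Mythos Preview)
Your proposal is correct and follows essentially the same route as the paper: expand $G_2(x_{-\theta}\wedge x_{-\gamma})$ over the basis \eqref{basisg} grouped by the admissible triples $(d_1,d_2,d_3)$, apply $Symm$ and pair against $x_\theta$, $x_\gamma$ using Lemma~\ref{lemma} (with the bulk of the work going into the six orderings of $[u_j,x_r]u^j[x_r,x_{-\gamma}]$ via \eqref{F1}--\eqref{F3} and \eqref{F7}), and then plug the result into \eqref{g}. Your explicit appeal to Lemma~\ref{inlambda22} to pass from $(g_2(x_{-\theta}\wedge x_{-\gamma})|x_\theta\wedge x_\gamma)$ to $(g_2(x_\theta\wedge x_\gamma)|x_{-\theta}\wedge x_{-\gamma})$ is a small but welcome clarification of a step the paper leaves implicit.
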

\begin{rem} Relation \eqref{finaledelta} easily gives Drinfeld's formulas for $\d$: if $\kappa$ denotes the Killing form of $\g$, then 
\begin{equation}\label{DF}
\d^\kappa=\begin{cases}
-\frac{1}{32 n^2}\quad&\text{if $\g=sl(n)$,}\\
-\frac{n-4}{16(n-2)^3}\quad&\text{if $\g=so(n)$,}\\
-\frac{n+2}{64(n+1)^3}\quad&\text{if $\g=sp(2n)$,}\\
-\frac{5}{144(\dim\g+2)}\quad&\text{if $\g$ is of exceptional type.}\\
\end{cases}
\end{equation}
\end{rem}
\begin{rem} It is possible to give an alternative formula for $\d$ without using the minimal gradation.  We already observed that we are reduced to evaluate \eqref{enough}. We'll do it by choosing $a=c=u, b=d=v$ with $u,v\in\h$. Let $\{x_\a\}$ be a set of root vectors with 
  $[x_\a,x_\beta]=N_{\a,\beta} x_{\a+\beta}$ for $\a,\beta\in\D, \a\ne -\beta$ and  $N_{\a,\be}=-N_{-\a,-\be}$. Then 
  \begin{equation}\label{altraformula}
(g_2(u\wedge v),u\wedge v)=6\sum_{\a\in\Dp,\be\in\D}\a(u)(\a+\be)(v)(\a(v)\be(u)-\a(u)\be(v))N^2_{\a,\be}.\end{equation}
The computation of \eqref{DF} starting from \eqref{altraformula} is possible but quite less handy than using \eqref{finaledelta}.

\end{rem}

\section{Minimal \texorpdfstring{$W$}{W}-algebras}
It is known by \cite{KW} that for $k\ne -h^\vee$ there is a vertex algebra $W$ strongly and freely generated by fields $L$, $J^v$ with  $v\in \g^\natural$, $G^u$ with $u\in \g_{-1/2}$
with the following  $\lambda$-brackets: $L$ is a Virasoro element  with central charge $\frac{k\,\dim\g}{k+h^\vee}-6k+h^\vee-4$, $J^u$ are primary of conformal weight $\Delta=1$, $G^{v}$ are primary of conformal weight $\Delta=\frac{3}{2}$ and
\begin{enumerate}
\item $[J^v_\l J^w]=J^{[v,w]}+\l\d_{ij}(k+\frac{h^\vee-\bar h^\vee_i}{2})(v|w)$ for $v\in \g^\natural_i$, $w\in \g^\natural_j$;
\item $[J^v_\l G^u]=G^{[v,u]}$ for  $ u\in \g_{-1/2}$, $v\in \g^\natural$;
\item $[G^{u}_\l G^{v}]=A(u,v,k)+\l B(u,v,k)+\frac{\l^2}{2}C(u,v,k)$ for  $u,v\in \g_{-1/2}$
with $C(u,v,k)\in \C$, and  conformal weights of $\Delta(B(u,v,k))=1$ and  $\Delta(A(u,v,k))=2$.
\end{enumerate}

To simplify notation, we will not record the dependence on $k$ in the functions $A,B,C$. We choose the basis $\{x_i\}$ to be the union of orthonormal bases of $\g_r^\natural$. Let $T=L_{-1}$ be the translation operator of $W$.

If $p=a\otimes b\in \g^\natural \otimes \g^\natural$ write $:p:=:J^aJ^b:$. We extend $:\cdot :$ linearly to obtain a map $:\cdot :$ from $\g^\natural \otimes \g^\natural$ to $W$.
Consider $S^2(\g^\natural)=\{a\otimes b+b\otimes a\mid a,b\in\g^\natural\}\subset\g^\natural \otimes \g^\natural$.
Since $:J^uJ^v:=:J^vJ^u:+TJ^{[u,v]}$ and since the elements  $J^{x_i}$, $G^{u_i}$,  $L$ strongly and freely generate $W$, we see that there exist maps $$P:\g_{-1/2}\times \g_{-1/2}\to S^2(\g^\natural),\ \ K,H:\g_{-1/2}\times \g_{-1/2}\to \g^\natural, \ \ Q:\g_{-1/2}\times \g_{-1/2}\to \C$$ such that $A(u,v)$ can be uniquely written as 
$$
A(u,v)=:P(u,v):+TJ^{K(u,v)}+Q(u,v)L.
$$
and 
$$
B(u,v)=J^{H(u,v)}.
$$
By skewsymmetry
$[G^u_\l G^v]=-[G^v_{-\l-T}G^u]$ so
\begin{enumerate}
\item $C(u,v)=-C(v,u)$;
\item $H(u,v)=H(v,u)$;
\item $P(u,v)=-P(v,u)$, $K(u,v)=-K(v,u)+H(v,u)$, $Q(u,v)=-Q(v,u)$,
\end{enumerate}
hence  $C(\cdot,\cdot)$ and $Q(\cdot,\cdot)$ are symplectic forms on $\g_{-1/2}$, and 
$$
P:\bigwedge^2 \g_{-1/2}\to S^2(\g^\natural),\quad H:S^2(\g_{-1/2})\to \g^\natural.
$$
By applying the axioms of vertex algebra, (\S\ 1.5 of \cite{DK})
we find for $a\in\g^\natural$, $v,w\in\g_{-1/2}$:
$$
C([a,v],w)=-C(v,[a,w]).
$$
Since $\g_{1/2}$, as a $\g^\natural$--module, is either irreducible or a sum $U\oplus U^*$ with $U$ irreducible and $U$ inequivalent to $U^*$, we see that, up to a constant, there is a unique symplectic $\g^\natural$-invariant nondegenerate bilinear form on $\g_{-1/2}$. Since $$\langle u,v\rangle:=(x_\theta|[u,v])$$ is such a form, we have that
\begin{equation}\label{Cc}C(\cdot,\cdot)=\Gamma(k)\langle\cdot,\cdot\rangle
\end{equation}
for some constant $\Gamma(k)$.

For $b\in\g^\natural$, let $b^\natural_i$ denote the orthogonal projection of $b$ onto $\g^\natural_i$. Write
 $$
 P(v,w)=\sum_{i,j,r,s}k_{i,j}^{r,s}(v,w)(x_i^r\otimes x_j^s)
 $$
with $k_{i,j}^{r,s}=k_{j,i}^{s,r}$.

\subsection{Jacobi identities between two \texorpdfstring{$G$}{G} and one \texorpdfstring{$J$}{J}} \ 

\noindent By Jacobi identity $[J^x_\l [G^v_\mu G^w]]-[G^v_{\mu}[J^a_\l G^w]]=[[J^a_\l G^v]_{\l+\mu}G^w] $. Explicitly
\begin{align*}
&[J^a_\l :P(v,w):]+[J^a_\l TJ^{K(v,w)}]+\l Q(v,w)J^a+\mu [J^a_\l J^{H(v,w)}]\\
&-:P(v,[a,w]):-TJ^{K(v,[a,w])}-Q(v,[a,w])L-\mu J^{H(v,[a,w])}-\tfrac{\mu^2}{2}c(v,[a,w])\\
&=:P([a,v],w):+TJ^{K([a,v],w)}+Q([a,v],w)L+(\l+\mu)J^{H([a,v],w)}+\tfrac{(\l+\mu)^2}{2}c([a,v],w).
\end{align*}

Using Wick formula ((1.37) \cite{DK})  and sesquilinearity we compute explicitly $[J^a_\l :P(v,w):]$ and $[J^a_\l TJ^{K(v,w)}]$.  Then, equating  the coefficients in $\l,\mu$, we find 
\begin{prop}
\begin{align}
&H(v,w)=\sum_r\frac{2\Gamma(k)}{2k+h^\vee-\bar h_r^\vee}[[x_\theta,v],w]^\natural_r\label{relh}.\\
&K(v,w)=\tfrac{1}{2}H(v,w).\\
&Q([a,v],w)=-Q(v,[a,w]).\\
&P([a,v],w)=-P(v,[a,w])+ad(a)P(v,w).\\
&H([a,v],w)=Q(v,w)a+[a,K(v,w)]\label{kijrs}\\
&+\sum_{i, j,r, s}k_{i,j}^{r,s}(v,w)(2k+h^\vee-\bar h_r^\vee)(a|x_i^r)x_j^s+\sum_{i, j,r, s}k_{i,j}^{r,s}(v,w) [[a,x_i^r],x_j^s].\notag\\
&\sum_{j,r}k^{r,r}_{j,j}(v,w)(2k+h^\vee-\bar h_r^\vee)+Q(v,w)(\frac{k\,\dim\g}{k+h^\vee}-6k+h^\vee-4)=3\Gamma(k)\langle v,w\rangle\label{JacobiL}.
\end{align}

In particular, $Q$ is an invariant symplectic form on $\g_{-1/2}$, hence
\begin{equation}\label{D} Q(u,v)=D(k)\langle u,v\rangle.\end{equation}
\end{prop}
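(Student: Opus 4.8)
The plan is to write out the stated Jacobi identity $[J^a_\lambda[G^v_\mu G^w]]-[G^v_\mu[J^a_\lambda G^w]]=[[J^a_\lambda G^v]_{\lambda+\mu}G^w]$ with the substitutions $A(v,w)=:P(v,w):+TJ^{K(v,w)}+Q(v,w)L$, $B(v,w)=J^{H(v,w)}$, $C(v,w)=\Gamma(k)\langle v,w\rangle$ and $[J^a_\lambda G^u]=G^{[a,u]}$, and then to compare, for every monomial $\lambda^i\mu^j$, the components of the two sides inside the conformal weight spaces $W_0=\C\vac$, $W_1=J^{\g^\natural}$ and $W_2=\C L\oplus TJ^{\g^\natural}\oplus :S^2(\g^\natural):$; these summands are in direct sum because $W$ is freely generated. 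The only brackets to open are $[J^a_\lambda:P(v,w):]$, via the non-commutative Wick formula, and $[J^a_\lambda TJ^{K(v,w)}]=(T+\lambda)[J^a_\lambda J^{K(v,w)}]$, via sesquilinearity, the inputs being $[J^a_\lambda J^b]=J^{[a,b]}+\lambda\beta_r(a|b)\vac$ for $a,b\in\g^\natural_r$ with $\beta_r=k+\tfrac{h^\vee-\bar h_r^\vee}{2}$, and $[J^a_\lambda L]=\lambda J^a$. Writing $P(v,w)=\sum k_{i,j}^{r,s}(v,w)\,x_i^r\otimes x_j^s$, I will use repeatedly the two elementary facts $\sum_{i,j,r,s}k_{i,j}^{r,s}(v,w)[x_i^r,x_j^s]=0$ and $\sum_{i,j}k_{i,j}^{r,r}(v,w)([a,x_i^r]|x_j^r)=0$, both because $k_{i,j}^{r,s}=k_{j,i}^{s,r}$ while the bracket, resp.\ the form pairing, is antisymmetric under $(i,r)\leftrightarrow(j,s)$.

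The coefficient of $\lambda^0\mu^0$ lies in $W_2$; using the convention $:a\otimes b:\ =\ :J^aJ^b:$ (so that the $\lambda^0$ part of $[J^a_\lambda:P(v,w):]$ is exactly $:ad(a)P(v,w):$, the would-be $TJ$ corrections cancelling by the first fact above) its three components give respectively $P([a,v],w)=-P(v,[a,w])+ad(a)P(v,w)$, $Q([a,v],w)=-Q(v,[a,w])$, and $K([a,v],w)+K(v,[a,w])=[a,K(v,w)]$ — the last a consistency relation, equivalent to the $\g^\natural$-equivariance of $H$, not among those listed. The coefficient of $\lambda^1\mu^0$ lies in $W_1$; collecting $Q(v,w)J^a$ (from $[J^a_\lambda Q(v,w)L]$), $J^{[a,K(v,w)]}$ (from $(T+\lambda)J^{[a,K(v,w)]}$), and, from the Wick formula applied to $:P(v,w):$, the terms $\beta_r(a|x_i^r)J^{x_j^s}+\beta_s(a|x_j^s)J^{x_i^r}$ (which combine into $2\beta_r(a|x_i^r)J^{x_j^s}$ after relabeling and $k_{j,i}^{s,r}=k_{i,j}^{r,s}$) together with the integral term $J^{[[a,x_i^r],x_j^s]}$, and equating to the single right-hand contribution $J^{H([a,v],w)}$, gives \eqref{kijrs}. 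The coefficient of $\mu^2$ harmlessly reproduces the already recorded $C([a,v],w)=-C(v,[a,w])$.

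The two remaining vacuum coefficients do the real work. At order $\lambda^2$ the left side contributes $\sum_r\beta_r(a|K(v,w)^\natural_r)$ (from $(T+\lambda)(\lambda\sum_r\beta_r(a|K(v,w)^\natural_r)\vac)$; the candidate Wick-integral contribution vanishes by the second fact above) and the right side $\tfrac12\Gamma(k)\langle[a,v],w\rangle$; at order $\lambda\mu$ the left side contributes $\sum_r\beta_r(a|H(v,w)^\natural_r)$ and the right side $\Gamma(k)\langle[a,v],w\rangle$. Comparing, for all $a\in\g^\natural_r$, forces $H(v,w)^\natural_r=2K(v,w)^\natural_r$ whenever $\beta_r\ne0$, hence $K=\tfrac12H$ generically in $k$, therefore identically. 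To put the $\lambda\mu$ relation into the form \eqref{relh} I would establish $\langle[a,v],w\rangle=(a\,|\,[[x_\theta,v],w])$ for $a\in\g^\natural$: by Jacobi and \eqref{F4}, $[[x_\theta,v],w]=[v,[w,x_\theta]]+2\langle v,w\rangle x$, and since $(a|x)=0$ on $\g^\natural$, invariance of $(\cdot|\cdot)$ gives $(a|[[x_\theta,v],w])=(a|[v,[w,x_\theta]])=([[a,v],w]|x_\theta)=\langle[a,v],w\rangle$; projecting onto $\g^\natural_r$ and using nondegeneracy of $(\cdot|\cdot)$ there yields $\beta_rH(v,w)^\natural_r=\Gamma(k)[[x_\theta,v],w]^\natural_r$, i.e.\ \eqref{relh}. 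Relation \eqref{JacobiL} is invisible to the $(J^a,G^v,G^w)$ identity (which is blind to the central charge, since $[J^a_\lambda L]=\lambda J^a$); it is the $\lambda^3\mu^0$-vacuum coefficient of the parallel Jacobi identity for the triple $(L,G^v,G^w)$, computed from $[L_\lambda L]=(T+2\lambda)L+\tfrac{\lambda^3}{12}\bigl(\tfrac{k\dim\g}{k+h^\vee}-6k+h^\vee-4\bigr)\vac$, $[L_\lambda G^u]=(T+\tfrac32\lambda)G^u$, $[L_\lambda J^b]=(T+\lambda)J^b$, and the order-$\lambda^3$ vacuum term $\tfrac16\beta_r$ produced by the Wick integral in $[L_\lambda:J^{x_i^r}J^{x_j^r}:]$; matching scalar parts and clearing denominators gives \eqref{JacobiL}. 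Finally $Q$ is skew by the skewsymmetry of $[G^u_\lambda G^v]$ and $\g^\natural$-invariant by the relation just proved, so it is a (possibly zero) scalar multiple of the unique-up-to-scalar invariant symplectic form $\langle\cdot,\cdot\rangle$ used earlier for $C$, whence $Q=D(k)\langle\cdot,\cdot\rangle$.

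The main obstacle is bookkeeping rather than ideas: one must apply the non-commutative Wick formula to $[J^a_\lambda:P(v,w):]$ keeping its integral term (the source of both the $[[a,x_i^r],x_j^s]$ term and the vacuum contributions), project each $\lambda^i\mu^j$ coefficient correctly onto the three summands of $W_2$ and onto $W_1,W_0$, and not forget that \eqref{JacobiL} requires the $(L,G^v,G^w)$ Jacobi identity, not the $(J^a,G^v,G^w)$ one. The only genuinely new input is the short $\tfrac12\Z$-grading identity $\langle[a,v],w\rangle=(a|[[x_\theta,v],w])$.
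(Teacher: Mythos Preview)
Your proof is correct and follows the paper's strategy: expand the Jacobi identity for the triple $(J^a,G^v,G^w)$ via the non-commutative Wick formula and sesquilinearity, then compare the coefficients of $\lambda^i\mu^j$ in the freely generated conformal-weight decomposition of $W$. You are also right that relation \eqref{JacobiL}, which involves the Virasoro central charge, cannot be extracted from the $(J^a,G^v,G^w)$ identity alone (since $[J^a_\lambda L]=\lambda J^a$ carries no central term) and must instead come from the $\lambda^3$-vacuum coefficient of the $(L,G^v,G^w)$ Jacobi identity; the paper leaves this step implicit in its presentation, but your derivation supplies the correct justification.
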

%\begin{align*}
%&[G^u_\l :y\otimes z+z\otimes y:]=[G^u_\l (:J^yJ^z:+:J^zJ^y:)]\\
%&=:G^{[u,y]}J^z:+:J^yG^{[u,z]}:+\l G^{[[u,y],z]}+:G^{[u,z]}J^y:+:J^zG^{[u,y]}:+\l G^{[[u,z],y]}\\
%&=2:G^{[u,y]}J^z:+2:G^{[u,z]}J^y:+\l G^{[[u,y],z]}+:G^{[u,z]}J^y:-TG^{[[u,z],y]}-TG^{[[u,y],z]}
%\end{align*}

\subsection{Jacobi identities between three \texorpdfstring{$G$}{G}} \ 

\noindent
We need auxiliary formulas. By Wick formula \cite[(1.37)]{DK}
\begin{align*}
&[G^u_\l (:J^yJ^z:)]=:G^{[u,y]}J^z:+:J^yG^{[u,z]}:+\l G^{[[u,y],z]}\\
&=:G^{[u,y]}J^z:+:G^{[u,z]}J^y:+TG^{[y,[u,z]]}+\l G^{[[u,y],z]},\end{align*}
moreover, by sesquilinearity,
\begin{align*}
&[G^u_\l TJ^a]=T[G^u_\l J^a]+\l [G^u_\l J^a]=TG^{[u,a]}+\l G^{[u,a]}.
\end{align*}
By Jacobi identity $[G^u_\l [G^v_\mu G^w]]-[G^v_{\mu}[G^u_\l G^w]]=[[G^u_\l G^v]_{\l+\mu}G^w] $. We compute each term. 

\begin{align*}
&[[G^u_\l G^v]_{\l+\mu}G^w]=[(:P(u,v):+\tfrac{1}{2}TJ^{H(u,v)}+Q(u,v)L+\l J^{H(u,v)}+\tfrac{\l^2}{2}c(u,v))_{\l+\mu}G^w]=\\
&-[G^w_{-\l-\mu-T}:P(u,v):]-\tfrac{1}{2}(\l+\mu)G^{[H(u,v),w]}+Q(u,v)(TG^w+\tfrac{3}{2}(\l+\mu)G^w)+\l G^{[H(u,v),w]}\\
&=-[G^w_{-\l-\mu-T}:P(u,v):]+\tfrac{1}{2}(\l-\mu)G^{[H(u,v),w]}+Q(u,v)(TG^w+\tfrac{3}{2}(\l+\mu)G^w)
\end{align*}
\begin{align*}
[G^u_\l [G^v_\mu G^w]]&=[G^u_\l(:P(v,w):+\tfrac{1}{2}TJ^{H(v,w)}+Q(v,w)L+\mu J^{H(v,w)}+\tfrac{\mu^2}{2}c(v,w))]\\
&=[G^u_\l:P(v,w):]+\tfrac{1}{2}TG^{[u,H(v,w)]}+\tfrac{1}{2}\l G^{[u,H(v,w)]}\\&+\tfrac{1}{2}Q(v,w)TG^u+\tfrac{3}{2}Q(v,w)\l G^u+\mu G^{[u,H(v,w)]} 
\end{align*}
so
\begin{align*}
[G^v_\mu [G^u_\l G^w]]&=[G^v_\mu:P(u,w):]\\
&+\tfrac{1}{2}TG^{[v,H(u,w)]}+\tfrac{1}{2}\mu G^{[v,H(u,w)]}+\tfrac{1}{2}Q(u,w)TG^v+\tfrac{3}{2}Q(u,w)\mu G^v+\l G^{[v,H(u,w)]} .
\end{align*}

Equating the coefficients of $\l, \mu$ and  the constant term we find
\begin{prop}
\begin{align}
&\tfrac{1}{2}G^{[H(u,v),w]}+\tfrac{3}{2}Q(u,v)G^w+\sum_{i, j, r,s}k_{i,j}^{r,s}(u,v)G^{[[w,x_i^r],x_j^s]}\label{1G}\\
&=\tfrac{1}{2}G^{[u,H(v,w)]}+\tfrac{3}{2}Q(v,w)G^u+\sum_{i, j, r,s}k_{i,j}^{r,s}(v,w)G^{[[u,x_i^r],x_j^s]}-G^{[v,H(u,w)]}\notag
\\
&-\tfrac{1}{2}G^{[H(u,v),w]}+\tfrac{3}{2}Q(u,v)G^w+\sum_{i, j, r, s}k_{i,j}^{r,s}(u,v)G^{[[w,x_i^r],x_j^s]}\label{2G}\\
&=G^{[u,H(v,w)]}-\tfrac{1}{2}G^{[v,H(u,w)]}-\tfrac{3}{2}Q(u,w)G^v-\sum_{i, j, r, s}k_{i,j}^{r,s}(u,w)G^{[[v,x_i^r],x_j^s]}\notag
\\
&Q(u,v)TG^w+ 2 \sum_{i, j} k_{i,j}(u,v)TG^{[[w,x_i],x_j]}=\label{3G}\\ 
&\tfrac{1}{2}TG^{[u,H(v,w)]}+\tfrac{1}{2}Q(v,w)TG^u- \sum_{i, j, r,s} k_{i,j}^{r,s}(v,w)TG^{[[u,x^r_i],x^s_j]}\notag\\
&-\tfrac{1}{2}TG^{[v,H(u,w)]}-\tfrac{1}{2}Q(u,w)TG^v+ \sum_{i, j, r, s} k_{i,j}^{r,s}(u,w)TG^{[[v,x^r_i],x^s_j]}\notag
\end{align}
\begin{align}
&-\sum_{i, j,r,s}k_{i,j}^{r,s}(u,v):G^{[w,x_i]}J^{x_j}:\label{4G}\\
&=\sum_{i,j,r,s}k_{i,j}^{r,s}(v,w):G^{[u,x_i]}J^{x_j}:-\sum_{i,j,r,s}k_{i,j}^{r,s}(u,w):G^{[v,x_i]}J^{x_j}:.\notag
\end{align}
\end{prop}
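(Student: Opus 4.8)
The plan is simply to finish the computation begun above: expand the one term still written symbolically, $-[G^w_{-\lambda-\mu-T}{:}P(u,v){:}]$ on the right of the Jacobi identity $[G^u_\lambda[G^v_\mu G^w]]-[G^v_\mu[G^u_\lambda G^w]]=[[G^u_\lambda G^v]_{\lambda+\mu}G^w]$, rewrite it so that every term has one of the ``shapes'' $G^a$, $TG^a$, ${:}G^aJ^b{:}$ that already occur in the other two brackets, and then equate coefficients. Matching coefficients is legitimate because, by \cite{KW}, $W$ is \emph{freely} generated by $L$, the $J^{x_i}$ and the $G^{u_i}$, so the corresponding PBW-type monomials are linearly independent and may be compared termwise.

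The ingredients needed are the elementary $\lambda$-brackets coming from sesquilinearity and primality, namely $[G^u_\lambda J^a]=G^{[u,a]}$, $[G^u_\lambda TJ^a]=(T+\lambda)G^{[u,a]}$ and $[G^u_\lambda L]=(\tfrac12 T+\tfrac32\lambda)G^u$, together with the Wick-formula identity for $[G^u_\lambda{:}J^yJ^z{:}]$ displayed above. Applying the latter to $[G^w_\nu{:}J^{x_i^r}J^{x_j^s}{:}]$ and then substituting $\nu\mapsto-\lambda-\mu-T$ (via $[Ta_\lambda b]=-\lambda[a_\lambda b]$) shows that $-[G^w_{-\lambda-\mu-T}{:}P(u,v){:}]$ contributes a ${:}GJ{:}$-part (whose two Wick summands coincide after relabelling, giving a term of the form in \eqref{4G}), a $TG$-part, and a $(-\lambda-\mu-T)G^{[[w,x_i^r],x_j^s]}$-part. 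In the $TG$-part one combines the Wick term $TG^{[x_i^r,[w,x_j^s]]}$ with the $-T$-summand $TG^{[[w,x_i^r],x_j^s]}$ and invokes the Jacobi identity of $\g$ together with the symmetry $k_{i,j}^{r,s}=k_{j,i}^{s,r}$, which annihilates $\sum k_{i,j}^{r,s}\,[w,[x_i^r,x_j^s]]$, to collapse it to $2\sum k_{i,j}^{r,s}(u,v)\,TG^{[[w,x_i^r],x_j^s]}$; the same Jacobi-plus-symmetry step rewrites the $TG$-summands of $[G^u_\lambda{:}P(v,w){:}]$ and $[G^v_\mu{:}P(u,w){:}]$ on the left-hand side into the single $TG^{[[\cdot,x_i^r],x_j^s]}$-terms appearing in \eqref{3G}.

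Now the whole identity is a polynomial in $\lambda,\mu$ whose coefficient of $\lambda^a\mu^b$ has conformal weight $\tfrac52-a-b$; since $W$ has no elements of conformal weight $\tfrac12$, only $a+b\le1$ can occur, so there is no $\lambda\mu$, $\lambda^2$ or $\mu^2$ term. The $\lambda^0\mu^0$-coefficient has weight $\tfrac52$, hence lies in the span of the $TG^{\,\cdot\,}$'s and ${:}G^{\,\cdot\,}J^{\,\cdot\,}{:}$'s, while the coefficients of $\lambda$ and of $\mu$ have weight $\tfrac32$, hence lie in the span of the bare $G^{\,\cdot\,}$'s. Equating, respectively, the coefficient of $\lambda$, the coefficient of $\mu$, the $TG^{\,\cdot\,}$-part of the constant coefficient, and its ${:}G^{\,\cdot\,}J^{\,\cdot\,}{:}$-part gives exactly \eqref{1G}, \eqref{2G}, \eqref{3G} and \eqref{4G}, and these four relations exhaust the identity.

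I expect the only real difficulty to be bookkeeping: keeping every sign correct through the skew-symmetry and sesquilinearity rewriting of $[G^w_{-\lambda-\mu-T}{:}P(u,v){:}]$, and correctly folding the double sums over an orthonormal basis of $\g^\natural$ back into the compact right-hand sides of \eqref{1G}--\eqref{4G} by repeated use of the Jacobi identity in $\g$ and of $k_{i,j}^{r,s}=k_{j,i}^{s,r}$. There is no conceptual obstacle, once the freeness of $W$ is invoked to justify the termwise comparison.
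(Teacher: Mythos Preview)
Your proposal is correct and follows essentially the same route as the paper: expand each of the three brackets in the Jacobi identity $[G^u_\lambda[G^v_\mu G^w]]-[G^v_\mu[G^u_\lambda G^w]]=[[G^u_\lambda G^v]_{\lambda+\mu}G^w]$ using the Wick formula and sesquilinearity, then equate the coefficients of $\lambda$, of $\mu$, and the constant term (the latter split into its $TG^{\,\cdot\,}$ and ${:}G^{\,\cdot\,}J^{\,\cdot\,}{:}$ parts). The paper states this in one line, while you spell out the auxiliary steps---the skew-symmetric rewriting of $-[G^w_{-\lambda-\mu-T}{:}P(u,v){:}]$, the Jacobi-plus-symmetry simplification of the $TG$ terms, the conformal-weight bound ruling out higher monomials, and the appeal to freeness of $W$---which are exactly the bookkeeping one needs and are all correct.
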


\vskip10pt
Recall from Section \ref{mingrad}
that
$\g_{-1/2}\wedge \g_{-1/2}=\C\oplus V'$, where $V'$ is the $\g^\natural$--module generated by bivectors $u\wedge v$ with $[u,v]=0$, and that $V'$ decomposes with multiplicity one  and no component is trivial. By Proposition \ref{Phinontrivial}, $P_{|V_{\g^\natural}(\l_h)}=f_h(k)\Phi_{|V_{\g^\natural}(\l_h)}$ for some costant $f_h(k)\in \C$. Thus, if $u\wedge v\in V_{\g^\natural}(\l_h)$, 
$$
k^{r,s}_{i,j}(u,v)=f_h(k)(\langle[[u,x^r_i],[v,x^s_j]]\rangle+\langle[[u,x^s_j],[v,x^r_i]]\rangle).
$$
If  $[v,w]=0$, \eqref{kijrs} becomes
\begin{align*}H([a,v],w)&=[a,\sum_r\frac{\Gamma(k)}{2k+h^\vee-\bar h_r^\vee}[[x_\theta,v],w]^\natural_r]
\\&+\sum_{i, j,r, s}k_{i,j}^{r,s}(v,w)(2k+h^\vee-\bar h_r^\vee)(a|x_i^r)x_j^s+\sum_{i, j,r, s}k_{i,j}^{r,s}(v,w) [[a,x_i^r],x_j^s].\end{align*}
Now compute
\begin{align*}
&(H([x^r_i,v],w)|x^s_j)\\
&=([x_i^r,\tfrac{1}{2}H(v,w)]|x^s_j)+\sum_{n,m,r',s'}k_{n,m}^{r',s'}(v,w)(2k+h^\vee-\bar h_{r'}^\vee)(x_i^r|x_n^{r'})(x_m^{s'}|x_j^s)\\&
+\sum_{n,m,r',s'}k^{r',s'}_{n,m}(v,w) ([[x_i^r,x_n^{r'}],x_m^{s'}]|x^s_j)\\
&=([x_i^r,\tfrac{1}{2}H(v,w)]|x^s_j)+k_{i,j}^{r,s}(v,w)(2k+h^\vee-\bar h_r^\vee)+\d_{r,s}\sum_{n,m}k^{r,s}_{n,m}(v,w) ([[x_i^r,x_n^{r}],x_m^{s}]|x^s_j).\\
\end{align*}
Since $[a,H(v,w)]=H([a,v],w)+H(v,[a,w])$ we can rewrite the above relations as
\begin{align*}
&(H([x^r_i,v],w)|x^s_j)-(H(v,[x^r_i,w])|x^s_j)\\
&=2f_h(k)(\langle[[v,x^r_i],[w,x^s_j]]\rangle+\langle[[v,x^s_j],[w,x^r_i]]\rangle)(2k+h^\vee-\bar h_r^\vee)\\
&+\d_{r,s}\sum_{n, m}2f_h(k)(\langle[v,x^r_n],[w,x^s_m]\rangle+\langle[v,x^s_m],[w,x^s_n]\rangle)([[x^r_i,x^r_n],x^s_m]|x^s_j).\end{align*}
More precisely
\begin{align}
&-\frac{\Gamma(k)}{2k+h^\vee-\bar h_s^\vee}(\langle[v,x^r_i],[w,x^s_j]\rangle+\langle[v,x^s_j],[w,x^r_i]\rangle)	\notag\\
&=f_h(k)(\langle[[v,x^r_i],[w,x^s_j]]\rangle+\langle[[v,x^s_j],[w,x^r_i]]\rangle)(2k+h^\vee-\bar h_r^\vee)\label{inter}\\
&+\d_{r,s}\sum_{n, m}f_h(k)(\langle[v,x^r_n],[w,x^s_m]\rangle+\langle[v,x^s_m],[w,x^s_n]\rangle)([[x^r_i,x^r_n],x^s_m]|x^s_j).\notag
\end{align}
If there are at least two simple components in $\g^\natural$, we have
\begin{equation}\label{fh}
f_h(k)=-\frac{\Gamma(k)}
{(2k+h^\vee-\bar h^\vee_s)(2k+h^\vee-\bar h_r^\vee)},
\end{equation}
which is in particular independent from  $h$. 
So we are reduced to determine $f_h(k)$ when $\g^\natural$ is simple or one-dimensional. Recall from the explicit description of the decomposition of $\g_{-1/2}\wedge \g_{-1/2}$ given in Section \ref{mingrad}, that in this case $V'$ is simple. We can therefore drop the superscript from $x_i^r, k_{i,j}^{r,s}$ and we denote $f_h$ simply by $f$. 
Choosing in \eqref{inter} $v,w,i,j$ such that $\langle[v,x_i],[w,x_j]\rangle+\langle[v,x_j],[w,x_i]\rangle\ne 0$ we can write 
$$
f(k)=\frac{\Gamma(k)}{(\xi k+\eta)(2k+h^\vee-\bar h_1^\vee)}
$$
with $\xi\ne0$.
In particular
\begin{align*}
&-(\xi k+\eta)(\langle[v,x_i],[w,x_j]\rangle+\langle[v,x_j],[w,x_i]\rangle)\\
&=(\langle[v,x_i],[w,x_j]\rangle+\langle[v,x_j],[w,x_i]\rangle)(2k+h^\vee-\bar h_1^\vee)\\
&+\sum_{n, m}(\langle[v,x_n],[w,x_m]\rangle+\langle[v,x_m],[w,x_n]\rangle)([[x_i,x_n],x_m]|x_j),
\end{align*}
so $\xi=-2$ and 
\begin{align*}
&-(\eta+(h^\vee-\bar h_1^\vee))(\langle[v,x_i],[w,x_j]\rangle+\langle[v,x_j],[w,x_i]\rangle)\\
&=\sum_{n,m}(\langle[v,x_n],[w,x_m]\rangle+\langle[v,x_m],[w,x_n]\rangle)([[x_i,x_n],x_m]|x_j).
\end{align*}
To compute $\eta$ we first observe that
\begin{align*}
&\sum_{n,m}(\langle[v,x_n],[w,x_m]\rangle+\langle[v,x_m],[w,x_n]\rangle)([[x_i,x_n],x_m]|x_j)\\
&=-\sum_{n,m}(\langle[v,x_n],[w,x_m]\rangle+\langle[v,x_m],[w,x_n]\rangle)([[x_i,x_n],x_j]|x_m)\\
&=-\sum_{n}(\langle[v,x_n],[w,[[x_i,x_n],x_j]]\rangle+\langle[v,[[x_i,x_n],x_j]],[w,x_n]\rangle),\\
\end{align*}
which implies, for any $a,b\in \g^\natural$
\begin{align*}
&(\eta+(h^\vee-\bar h_1^\vee))(\langle[v,a],[w,b]\rangle+\langle[v,b],[w,a]\rangle)\\
&=\sum_{n}(\langle[v,x_n],[w,[[a,x_n],b]]\rangle+\langle[v,[[a,x_n],y]],[w,x_n]\rangle).\\
\end{align*}

Next we need some formulas: let $C_{\g^\natural}=\sum_i x_i^2$ be the Casimir element of $\g^\natural$ and $C_{\g_0}$ the Casimir element of $\g_0$. Since $C_{\g^\natural}=\C_{\g_0}-2x^2$, by Lemma 5.1 of \cite{KW} we have that $\sum_i[x_i,[x_i,v]]=(h^\vee-\tfrac{3}{2})v$.
Now a lengthy computation yields
\begin{align*}
&\sum_{n}(x_\theta|[[v,x_n],[w,[[a,x_n],b]]]+[[v,[[a,x_n],b]],[w,x_n]])\\
&=(h^\vee-3/2-2\bar h_1^\vee)(\langle[v,b],[w,a]\rangle+\langle[v,a],[w,b]\rangle)\\
&+\sum_{n}(\langle[b,[v,x_n]],[a,[w,x_n]]\rangle+\langle[a,[v,x_n]],[b,[w,x_n]]\rangle).
\end{align*}

Consider the map $\Psi:\wedge^2\g_{-1/2}\to S^2(\g^\natural)^*$ defined by polarizing \eqref{Phi}:
$$
\Psi(v\wedge w)(a,b)=\langle[v,a],[w,b]\rangle+\langle[v,b],[w,a]\rangle,
$$
and note that 
$$
\sum_{n}(\langle[b,[v,x_n]],[a,[w,x_n]]\rangle+\langle[a,[v,x_n]],[b,[w,x_n]]\rangle)=\Psi(\sum_n [v,x_n]\wedge [w,x_n])(a,b).
$$
By Lemma \ref{lemma}, \eqref{F8}, we have
$$
\sum_{n}(\langle[b,[v,x_n]],[a,[w,x_n]]\rangle+\langle[a,[v,x_n]],[b,[w,x_n]]\rangle)=-\tfrac{1}{2}(\langle[b,v],[a,w]\rangle+\langle[a,v],[b,w]\rangle).
$$
The outcome is that
\begin{align*}
&\sum_{n}(\langle[v,x_n],[w,[[a,x_n],b]]\rangle+\langle[v,[[a,x_n],b]],[w,x_n]\rangle)\\
&=(h^\vee-2-2\bar h_1^\vee)(\langle[v,b],[w,a]\rangle+\langle[v,a],[w,b]\rangle).
\end{align*}
Thus $\eta=-2-\bar h_1^\vee$. In particular
\begin{equation}\label{comb}f(k)=-\frac{\Gamma(k)}{4(k+\frac{h^\vee-\bar h_1^\vee}{2})(k+\frac{\bar h^\vee_1}{2}+1)}.\end{equation}
This ends  the computation of the proportionality factor $f_h(k)$.
\vskip5pt
It remains to compute $Q(v,w)$ and $P(v,w)$ with $[v,w]\ne0$. 
To this end introduce
 $$TR_r(v,w):=\sum_{j}k^{r,r}_{j,j}(v,w).$$ 
Relation \eqref{kijrs} gives

%\begin{align*}
%&H([a,v],w)=Q(v,w)a+[a,K(v,w)]\\
%&+\sum_{n, j,t, s}k_{n,j}^{t,s}(v,w)(2k+h^\vee-h^\vee_t/\nu_t)(a|x_n^t)x_j^s+\sum_{n, j,t, s}k_{n,j}^{t,s}(v,w) [[a,x_n^t],x_j^s].
%\end{align*}
\begin{align*}
(H([x^r_i,v],w)|x^r_i)&=Q(v,w)+k^{r,r}_{i,i}(v,w)(2k+h^\vee-\bar h_r^\vee)\\
&+\sum_{n,m}k^{r,r}_{n,m}(v,w) (([[x^r_i,x^r_n],x^r_m]|x^r_i).\notag
\end{align*}
So
\begin{align*}
\sum_i(H([x^r_i,v],w)|x^r_i)&=\dim\g^\natural_r Q(v,w)+TR_r(v,w)(2k+h^\vee-\bar h_r^\vee)\\
&+\sum_i\sum_{n, m}k^{r,r}_{n,m}(v,w) ([[x^r_i,x^r_n],x^r_m]|x^r_i).\notag
\end{align*}
Using the relation $\sum_i[x^r_i,[x^r_i,a]]=2(\bar h_r^\vee)a$ for $a\in\g^\natural_r$ we obtain
\begin{align*}
\sum_i(H([x^r_i,v],w)|x^r_i)&=\dim\g_r^\natural Q(v,w)+ TR_r(v,w)(2k+h^\vee+\bar h_r^\vee).
\end{align*}
Since
$$
\sum_i(H([x^r_i,v],w)|x^r_i)=\tfrac{2\Gamma(k)\gamma_r}{2k+h^\vee-\bar h_r^\vee}\langle v,w\rangle,
$$
 we have, recalling that $Q(v,w)=D(k)\langle v,w\rangle$,
\begin{equation}\label{trr}
TR_r(v,w)=\frac{1}{2k+h^\vee+\bar h_r^\vee}\left(-\dim \g^\natural_r D(k)+\tfrac{2\Gamma(k)\gamma_r}{2k+h^\vee-\bar h_r^\vee}\right)\langle v,w\rangle.
\end{equation}
Relation \eqref{JacobiL} becomes

$$
\sum_{r}\frac{2k+h^\vee-\bar h_r^\vee}{2k+h^\vee+\bar h_r^\vee}\left(-\dim \g^\natural_r D(k)+\tfrac{2\Gamma(k)\gamma_r}{2k+h^\vee-\bar h_r^\vee}\right)+D(k)(\frac{k\,\dim\g}{k+h^\vee}-6k+h^\vee-4)=3\Gamma(k).
$$
%Using (5.8) of \cite{KW} and solving for $D(k)$ we find
%{\tiny
%$$D(k)=\frac{C(k)
%( 3 + 3 \bar h_1^\vee + h^\vee + 6 k)(k+h^\vee)}{\bar h_1^\vee \left(h^\vee (\dim \g-9 k+1)+k (2 \dim \g-6
 %  k+1)-3 (h^\vee)^2\right)+(h^\vee+2 k)
 %  \left(-h^\vee (\dim \g+k+9)+5 (h^\vee)^2-3 k (2k+3)\right)}
%$$
%}
Solving for $D(k)$ we find that 
$$D(k)=\Gamma(k) E(k),$$
where $E(k)$ is a complicated but explicit  rational function  in $k,h^\vee,\bar h_r^\vee,\gamma_r, \dim\g,\dim\g_r^\natural $. In turn, substituting in \eqref{trr}, $TR_r$ can be expressed as 
\begin{equation}\label{trrr}
TR_r(v,w)=\Gamma(k)  E_r(k)\langle v,w\rangle.
\end{equation}

Let $\{u_i\}$, $\{u^i\}$ be dual bases of $\g_{-1/2}$: $\langle u_i,u^j\rangle=\d_{i,j}$. We have to compute $P(\sum_i u_i\wedge u^i).$ If $u,v\in\g_{-1/2}$, then
$$
u\wedge v=\frac{\langle u,v\rangle}{\dim \g_{1/2}}\sum_i u_i\wedge u^i+s, \quad s\in V_2.
$$
By covariance of $P$, we have $P(\sum_i u_i\wedge u^i)= \sum_{i,r} \alpha_rx_i^r\otimes x_i^r$, thus
$$
\sum_{i,r,s,m,n}k^{r,s}_{m,n}(u_i,u^i)x^r_m\otimes x^s_n= \sum_{i,r}\alpha_r x_i^r\otimes x^r_i,
$$
hence $\sum_{i}k^{r,s}_{m,n}(u_i,u^i)=\d_{r,s}\d_{m,n}\a_r$.
In particular
$
\dim \g^\natural_r\alpha_r=\sum_{i,j}k_{j,j}^{r,r}(u_i,u^i)
$
hence 
$$
\dim \g^\natural_r\a_r=\sum_i TR_r(u_i,u^i)=\Gamma(k)  E_r(k)\sum_i\langle u_i,u^i\rangle=\Gamma(k)  E_r(k)\dim \g_{-1/2}.
$$
Since $\Phi$ is equivariant we have likewise
$$
\sum_{i,r,s,m,n}(\langle [u_i,x^r_m],[u^i,x^s_n]\rangle+\langle [u_i,x^s_m],[u^i,x^r_m]\rangle)x^r_m\otimes x^s_n= \sum_{i,r} \beta_r x^r_i\otimes x^r_i,
$$
hence
$$
\sum_{i}(\langle [u_i,x^r_m],[u^i,x^s_n]\rangle+\langle [u_i,x_n^s],[u^i,x^r_m]\rangle)=\d_{r,s}\d_{m,n}\be_r
$$
and
\begin{align*}
\dim \g^\natural_r\beta_r&=\sum_{i,m}(\langle [u_i,x^r_m],[u^i,x^r_m]\rangle+\langle [u_i,x_m^r],[u^i,x_m^r]\rangle)=-2\sum_{i,m}\langle[x_m^r, [x_m^r,u_i]],u^i\rangle\\
&=-2\gamma_r\sum_i\langle u_i,u^i\rangle=-2\gamma_r\dim \g_{-1/2}.
\end{align*}
Since $P(s)=f(k)\Phi(s)$,
\begin{align*}
&P(s)
=\sum_{n,m,r,s}f(k)(\langle [u,x^r_m],[v,x^s_n]\rangle+\langle[u,x^s_n],[v,x^r_m]\rangle)(x^r_m\otimes x^s_n)\\
&-\frac{\langle u,v\rangle}{\dim \g_{1/2}}f(k)\sum_{i, m,n,r,s}(\langle[u_i,x^r_m],[u^i,x^s_n]\rangle +\langle[u_i,x^s_n],[u^i,x^r_m]\rangle)(x^r_m\otimes x^s_n)=\\
&\sum_{ m,n,r,s}f(k)(\langle [u,x^r_m],[v,x^s_n]\rangle+\langle[u,x^s_n],[v,x^r_m]\rangle)(x^r_m\otimes x^s_n)-\frac{\langle u,v\rangle}{\dim \g_{1/2}}f(k)\sum_{i,r}\be_r(x^r_i\otimes x^r_i)=\\
&\sum_{m,n,r,s}f(k)(\langle [u,x^r_m],[v,x^s_n]\rangle+\langle[u,x^s_m],[v,x^r_m]\rangle)(x^r_m\otimes x^s_n)+\langle u,v\rangle f(k)\sum_{m,r}\frac{2\gamma_r }{\dim \g^\natural_r}(x_m^r\otimes x_m^r).
\end{align*}
The outcome is that
\begin{align}\label{ar}
P(u\wedge v)&=\langle u,v\rangle\sum_{r,m}\frac{\Gamma(k)  E_r(k)+2\gamma_rf(k)}{\dim \g^\natural_r}(x_m^r\otimes x_m^r)\\
&+\sum_{m,n,r,s}f(k)(\langle [u,x^r_m],[v,x^s_n]\rangle+\langle[u,x^s_n],[v,x^r_m]\rangle)(x^r_m\otimes x^s_n).\notag
\end{align}

%\begin{defi} We say that $\g$ is of type $1$ if $\g^\natural$ has an odd number of components (indeed one or three),  of type $2$ if $\g^\natural$ has  two components.
%\end{defi}
Observe from \eqref{fh} that  $f$ does not depend on the choice of $r,s$, hence $\{\bar h_r^\vee\}$ has at most two elements, 
and if there are more than two components,  $\{\bar h_r^\vee\}$ is a singleton. 
%In particular, if  $\g$ has not two components, let $\bar h_0^\vee$  be  the unique element of  $\{\bar h_r^\vee\}$.

We now deal with the case in which $\g^\natural$ has three components. Suppose that  $\g^\natural$ has a nontrivial center.  Then $\bar h_1^\vee=0$, also $\bar h_2^\vee, \bar h_3^\vee$ vanish and $\g^\natural$ is $3$-dimensional abelian. This is not possible, hence $\g^\natural$ is semisimple,   
 $\a_{i_0}$ is long and is a  node of degree $3$ in the Dynkin diagram of $\g$. Therefore one of the components, say $\g_1^\natural$,  has to be $sl(2)$. In particular $\bar h_1^\vee=h^\vee_1=2$. By the above remark, we have $\bar h_2^\vee=\bar h^\vee_3=2$ and indeed $\nu_2=\nu_3=1$. Hence all components are isomorphic to $sl(2)$ and this forces $\g$ to be of type $D_4$. 
 
Set
$$
p(k)=\begin{cases}
(k+\tfrac{h^\vee-\bar h_1^\vee}{2})(k+\tfrac{h^\vee-\bar h_2^\vee}{2})&\text{if $\g^\natural$ has two components},\\[5pt]
(k+\tfrac{h^\vee-\bar h_1^\vee}{2})(k+\tfrac{\bar h_1^\vee}{2}+1)&\text{otherwise.}
\end{cases}
$$
%As we have shown above, in the second instance,  $p(k)$ depends only on $\g^\natural$ and not on the choice of the two distinct components $\g^\natural_r$, $\g^\natural_s$.
Observe that, combining  \eqref{fh}  and \eqref{comb}, we have  $f(k)=-\frac{\Gamma(k)}{4p(k)}$ in all cases.
We summarize our findings in the following proposition.
\begin{prop}
There are explicitly computable rational functions $a_r(k)$, $b(k)$, $c(k)$, $d_r(k)$ such that, up to  a constant $C$,
\begin{align*}
&[{G^v}_\l G^w]=\\&C\left(\langle v,w\rangle\sum_{i,r}a_r(k):J^{x^r_i}J^{x^r_i}:+b(k)\sum_{i, j,r}(\langle[v,x^r_i],[w,x^r_j]\rangle+\langle[v,x^r_j],[w,x^r_i]\rangle):J^{x^r_i}J^{x^r_j}:\right)+\\
&C\left(c(k)\langle v,w\rangle L+\sum_rd_r(k)\left(\tfrac{1}{2}TJ^{[[x_\theta,v],w]_r^\natural}+\l J^{[[x_\theta,v],w]_r^\natural}\right)+\tfrac{\l^2}{2}\langle v,w\rangle\right).
\end{align*}
More precisely
$$
b(k)=-\frac{1}{4p(k)},\quad d_r(k)=\frac{1}{k+\tfrac{h^\vee-\bar h_r^\vee}{2}},
$$
while $a_r(k)$ and $c(k)$ are certain rational functions of degree respectively $-2$ and $-1$.
\end{prop}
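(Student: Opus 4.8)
The plan is to read the proposition off from the formulas accumulated throughout this section, after singling out the overall constant. By \eqref{Cc}, \eqref{relh}, \eqref{D} and the decomposition $A(v,w)=:P(v,w):+TJ^{K(v,w)}+Q(v,w)L$ with $K=\tfrac12 H$, the bracket $[G^v_\l G^w]$ is completely determined by the scalars $\Gamma(k),D(k)$ and by the $\g^\natural$-equivariant map $P$. The first point is that each of these, hence the whole bracket, is proportional to $\Gamma(k)$: this is manifest for $C=\Gamma(k)\langle\cdot,\cdot\rangle$ and for $H$; for $D(k)$ it follows from the linear relation obtained by combining \eqref{JacobiL}, \eqref{trr}, \eqref{trrr}, whose right-hand side is a multiple of $\Gamma(k)$ while its coefficient of $D(k)$ does not involve $\Gamma(k)$; and for $P$ it follows from \eqref{ar} once one knows $f(k)=-\tfrac{\Gamma(k)}{4p(k)}$. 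I therefore take $C=\Gamma(k)$ and divide every term by it; the functions $a_r,b,c,d_r$ are the resulting coefficients.

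Reading them off is then straightforward. The coefficient of $\tfrac{\l^2}{2}$ is $\langle v,w\rangle$. By \eqref{relh} and $K=\tfrac12 H$ the coefficient of $\l J^{[[x_\theta,v],w]^\natural_r}$ is $\tfrac{2}{2k+h^\vee-\bar h_r^\vee}=d_r(k)$ and that of $\tfrac12 TJ^{[[x_\theta,v],w]^\natural_r}$ is $\tfrac12 d_r(k)$. The coefficient of $\langle v,w\rangle L$ is $c(k):=D(k)/\Gamma(k)$; solving the linear relation presents $c(k)$ as the quotient of a rational function of degree $0$ by one of degree $1$ — the degree-one denominator coming from the $-6k$ term — so $c(k)$ is a rational function of degree $-1$. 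For the normally ordered products, expand $:P(v,w):$ by writing $v\wedge w$ as $\tfrac{\langle v,w\rangle}{\dim\g_{1/2}}\sum_i u_i\wedge u^i$ plus a vector of $V'$ and insert \eqref{ar}: the $\langle v,w\rangle$-proportional part contributes $a_r(k)=\tfrac{E_r(k)+2\gamma_r b(k)}{\dim\g_r^\natural}$ as the coefficient of $\langle v,w\rangle :J^{x_i^r}J^{x_i^r}:$, where $b(k)=f(k)/\Gamma(k)$, while by Proposition \ref{Phinontrivial} the image of the $V'$-part under $P$ is $f(k)$ times the value of $\Phi$, and so carries the coefficient $b(k)$ against the polarized form of $\Phi$. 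Since \eqref{trr} presents $E_r(k)$ as the quotient of a degree $-1$ rational function by a degree-one polynomial, it is of degree $-2$; as $b(k)$ is also of degree $-2$, so is $a_r(k)$.

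It remains to identify $b(k)=f(k)/\Gamma(k)$ with $-1/(4p(k))$, and here lies the genuine work. A priori there is one proportionality constant $f_h(k)$ for each irreducible constituent $V_{\g^\natural}(\l_h)$ of $V'$, and one must show they all coincide with $-\Gamma(k)/(4p(k))$. This rests on the three-case analysis already performed: if $\g^\natural$ is simple or one-dimensional then $V'$ is irreducible and \eqref{comb} gives $f(k)=-\Gamma(k)/\bigl(4(k+\tfrac{h^\vee-\bar h_1^\vee}{2})(k+\tfrac{\bar h_1^\vee}{2}+1)\bigr)=-\Gamma(k)/(4p(k))$; if $\g^\natural$ has exactly two components then \eqref{fh} shows $f_h(k)$ is independent of $h$ and equals $-\Gamma(k)/\bigl((2k+h^\vee-\bar h_1^\vee)(2k+h^\vee-\bar h_2^\vee)\bigr)=-\Gamma(k)/(4p(k))$; and if $\g^\natural$ has three components, the structure theory forces $\g$ to be of type $D_4$ with all $\bar h_r^\vee=2$, so that $p(k)=(k+\tfrac{h^\vee-2}{2})^2$ and \eqref{fh} applies once more. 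The delicate step within this is the derivation of \eqref{comb} and \eqref{fh} from the Jacobi identities (in particular from \eqref{kijrs} via \eqref{inter}): one must verify that the value of $f_h(k)$ read off from \eqref{inter} does not depend on which admissible pair of basis vectors the relation is tested against, which is precisely the computation of the constant $\eta$ via the formulas of Lemma \ref{lemma}. I expect that verification — rather than the mechanical assembly of the remaining coefficients — to be the main obstacle.
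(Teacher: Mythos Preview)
Your proposal is correct and matches the paper's approach exactly: the proposition is introduced in the paper with the sentence ``We summarize our findings in the following proposition,'' so its proof consists precisely of assembling the formulas \eqref{Cc}, \eqref{relh}, \eqref{D}, \eqref{trr}, \eqref{trrr}, \eqref{ar} together with $f(k)=-\Gamma(k)/(4p(k))$ from \eqref{fh} and \eqref{comb}, setting $C=\Gamma(k)$, and reading off the coefficients and their degrees --- exactly as you do. The verification you flag as the potential obstacle (the computation of $\eta$ leading to \eqref{comb}) is already carried out in full in the text preceding the proposition, so nothing further is required.
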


If we set  $\varphi(u,v)(w,z)= \sum_{i, j,r,s} k^{r,s}_{i,j}(u,v)\langle[[w,x^r_i],x^s_j],z\rangle$ then $\varphi$ is alternating in $w,z$ so it defines a map $\varphi:\bigwedge^2\g_{-1/2}\to \bigwedge^2\g_{1/2}$ and it is $\g^\natural$-equivariant, since
$\varphi= \pi\circ Symm \circ \Phi$  where $\pi$ is the action of $\g^\natural$ on $\g_{-1/2}$.

Relations \eqref{1G}, \eqref{2G}, \eqref{3G} then become
\begin{align}
&\tfrac{1}{2}\langle[H(u,v),w],z\rangle+\tfrac{3}{2}Q(u,v)\langle w,z\rangle+\varphi(u,v)(w,z)\label{GG1}\\
&=\tfrac{1}{2}\langle[u,H(v,w)],z\rangle+\tfrac{3}{2}Q(v,w)\langle u,z\rangle+\varphi(v,w)(u,z)-\langle[v,H(u,w)],z\rangle\notag,
\\
&-\tfrac{1}{2}\langle[H(u,v),w],z\rangle+\tfrac{3}{2}Q(u,v)\langle w,z\rangle+\varphi(u,v)(w,z)\label{GG2}\\
&=\langle[u,H(v,w)],z\rangle-\tfrac{1}{2}\langle[v,H(u,w)],z\rangle-\tfrac{3}{2}Q(u,w)\langle v,z\rangle-\varphi(u,w)(v,z)\notag,
\\
&Q(u,v)\langle w,z\rangle+ 2\varphi(u,v)(w,z)=\label{GG3}\\ 
&\tfrac{1}{2}\langle[u,H(v,w)],z\rangle+\tfrac{1}{2}Q(v,w)\langle u,z\rangle- \varphi(v,w)(u,z)\notag\\
&-\tfrac{1}{2}\langle[v,H(u,w)],z\rangle-\tfrac{1}{2}Q(u,w)\langle v,z\rangle+ \varphi(u,w)(v,z).\notag
\end{align}

\begin{lemma}
Assume $C\ne0$ for almost all $k$ and set 
$$
R(k)=\begin{cases}
-\frac{\sum_rd_r(k)\Vert(h_{\a_{i_0}})^\natural_r\Vert^2+3c(k) +2\sum_{r}a_r(k)\gamma_r}{2b(k)}&\text{if $\g^\natural$ has two components,}\\[10pt]
-\frac{3/2d_1(k)+3c(k) +(2h^\vee-3)a_1(k)}{2b(k)}&\text{otherwise.}
\end{cases}
$$
Then $R(k)$ does not depend on $k$. More precisely
$$
R(k)=\begin{cases}
\frac{3 - 4 h^\vee +2 \sum_r \bar h_r^\vee\Vert(h_{\a_{i_0}})^\natural_r\Vert^2}{2}&\text{if $\g^\natural$ has two components,}\\[10pt]
\frac{3-4 h^\vee + 3\bar h_1^\vee}{2}&\text{otherwise.}\end{cases}
$$
\end{lemma}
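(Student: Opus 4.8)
The plan is to obtain the constancy of $R(k)$ as a \emph{scalar} consequence of the Jacobi identities \eqref{GG1}, \eqref{GG2}, \eqref{GG3}: after evaluating a suitable linear combination of these three relations on a well-chosen quadruple of vectors of $\g_{-1/2}$, every term can be put in closed form in terms of the rational functions $a_r(k),b(k),c(k),d_r(k)$, and comparing the two sides produces exactly the asserted identity. The distinguished vectors I would use are built from the $\Z$-grading of Section~\ref{mingrad}: with $\gamma=\theta-\a_{i_0}$, both $f_{i_0}$ and $x_{-\gamma}=[x_{-\theta},e_{i_0}]$ lie in $\g_{-1/2}$; a short computation shows $\langle f_{i_0},x_{-\gamma}\rangle\ne0$, so they span a symplectic hyperbolic plane, and $[[x_\theta,f_{i_0}],x_{-\gamma}]=[x_\gamma,x_{-\gamma}]$ lies in $\h$ with $\g^\natural$-projection a fixed nonzero multiple of $(h_{\a_{i_0}})^\natural$ (one reads this off the grading, $x$ contributing the full $\theta^\vee$-direction). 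Hence, by \eqref{relh}, $H(f_{i_0},x_{-\gamma})$ equals, up to the overall constant $C=\Gamma(k)$ and a universal scalar, the explicit vector $\sum_r d_r(k)\,(h_{\a_{i_0}})^\natural_r$; and since $[f_{i_0},(h_{\a_{i_0}})^\natural_r]$ is a multiple of $f_{i_0}$ whose coefficient is controlled by $\Vert(h_{\a_{i_0}})^\natural_r\Vert^2$ (up to the normalization of $\a_{i_0}$), a further pairing against $x_{-\gamma}$ produces the terms $\sum_r d_r(k)\Vert(h_{\a_{i_0}})^\natural_r\Vert^2$.

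Next I would substitute $u,v,w,z$ taken among $f_{i_0},x_{-\gamma}$ (adding, should $\dim\g_{-1/2}$ require it, one further hyperbolic pair that plays a purely auxiliary role) into the combination of \eqref{GG1}--\eqref{GG3} which keeps all three families of terms alive, and evaluate term by term. The $Q$-terms are multiples of $Q(\cdot,\cdot)=D(k)\langle\cdot,\cdot\rangle=\Gamma(k)c(k)\langle\cdot,\cdot\rangle$ by \eqref{D} and \eqref{Cc}, yielding the $3c(k)$ contribution. For the $\varphi=\pi\circ Symm\circ\Phi$ terms one inserts the explicit $P$ of \eqref{ar}: its diagonal ``$a_r$-part'' contributes through $\sum_m[x_m^r,[x_m^r,\cdot]]=\gamma_r(\cdot)$ (the Casimir eigenvalue of $\g^\natural_r$ on $\g_{-1/2}$), producing the $2\sum_r a_r(k)\gamma_r$ contribution, while its ``$\Phi$-part'', with coefficient $f(k)/\Gamma(k)=b(k)=-1/(4p(k))$, reduces via Lemma~\ref{lemma} — in particular \eqref{F7}, \eqref{F8}, \eqref{casgm}, \eqref{dimgm} — to $b(k)$ times explicit numbers in $h^\vee,\bar h_r^\vee,\dim\g^\natural_r$. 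Dividing by $C=\Gamma(k)$, nonzero for almost all $k$ by hypothesis, turns the identity into one between rational functions; collecting the $b(k)$-free terms on one side it reads
\[
\sum_r d_r(k)\Vert(h_{\a_{i_0}})^\natural_r\Vert^2+3c(k)+2\sum_r a_r(k)\gamma_r=(\text{explicit constant})\cdot(-2b(k)),
\]
that is, $R(k)=\text{const}$; one then reads off the constant and rewrites it in terms of the Section~\ref{mingrad} data to get the two stated closed forms. The case where $\g^\natural$ has a single simple component or a one-dimensional centre is handled identically: it differs only in that $V'$ is then irreducible, so $f(k)$ is the function of \eqref{comb} rather than of \eqref{fh}, and $p(k)$ has its second form.

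The real difficulty is computational: managing the long chain of contractions with $(\cdot|\cdot)$ and $\langle\cdot,\cdot\rangle$, and keeping track of the ratios $\nu_r$ relating the normalized form of $\g$ restricted to $\g^\natural_r$ to the normalized form of $\g^\natural_r$ (so that $\bar h_r^\vee=h_r^\vee/\nu_r$ and $\gamma_r$ are referred to the correct forms), as well as choosing the precise linear combination of \eqref{GG1}--\eqref{GG3} that does not collapse to a trivial identity on the chosen vectors. The one genuinely non-formal point is that $R(k)$ is \emph{a priori} a nonconstant rational function — $2p(k)$ times the numerator has degree one — so the cancellation of its degree-one part is not automatic: it uses precisely the Jacobi relations \eqref{GG1}--\eqref{GG3} together with the shapes $b(k)=-1/(4p(k))$ and $d_r(k)=1/(k+\tfrac{h^\vee-\bar h_r^\vee}{2})$ established above.
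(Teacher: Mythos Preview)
Your plan is essentially the paper's approach --- same distinguished vectors $f_{i_0}$ and $x_{-\gamma}=[x_{-\theta},e_{i_0}]$, same idea of extracting a scalar identity from the Jacobi relations \eqref{GG1}--\eqref{GG3} and separating the $b(k)$-piece of $\varphi$ from the rest --- but the paper sharpens the execution in two places you leave vague.

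First, rather than an unspecified linear combination of \eqref{GG1}--\eqref{GG3}, the paper sets $v=u$ in \eqref{GG1} alone. This kills $P(u,u)$, $Q(u,u)$, $H(u,v)$ on the left, leaving simply
\[
\tfrac{1}{2}\langle[H(u,u),w]+[u,H(u,w)],z\rangle-\tfrac{3}{2}Q(u,w)\langle u,z\rangle=\varphi(u,w)(u,z),
\]
which is then evaluated at $u=f_{i_0}$, $w=z=x_{-\gamma}$. No auxiliary hyperbolic pair is needed. The left side is handled case by case (two components vs.\ otherwise) using weight considerations and, in the one-component case, the identity $[[[x_\theta,u],u]^\natural,w]+[u,[[x_\theta,u],w]^\natural]=-\tfrac32\langle u,w\rangle u$; this is where the two numerators of $R(k)$ arise.

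Second, the paper does \emph{not} compute the $b(k)$-side directly via Lemma~\ref{lemma}. It observes instead that the right side equals $Cb(k)$ times a triple contraction that is manifestly $k$-independent, plus $C\sum_r a_r(k)\gamma_r$ times $\langle f_{i_0},x_{-\gamma}\rangle^2$. Hence $R(k)$ equals that $k$-independent number, and the value is then read off by computing $\lim_{k\to\infty}R(k)$ from the already-known asymptotics of $a_r,b,c,d_r$. Your proposal to evaluate the $\Phi$-part head-on via \eqref{F7}, \eqref{F8} may well succeed, but it is a harder computation than the limit trick, and you would still need \eqref{Pan}, \eqref{Pann} to put the answer in the stated closed form.
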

\begin{proof}
Choose $v=u$ in \eqref{GG1}. Then we obtain
\begin{align}\label{HGG1}
\tfrac{1}{2}(\langle[H(u,u),w]+[u,H(u,w)],z\rangle)-\tfrac{3}{2}Q(u,w)\langle u,z\rangle=\varphi(u,w)(u,z).
\end{align}
%Note that
%$$[[x_\theta,u],v]=\sum_r[[x_\theta,u],v]_r^\natural+\langle u,v\rangle x.
%$$
Using the explicit formulas
$$H(v,w)=C\sum_rd_r(k)[[x_\theta,v],w]^\natural_r,\quad Q(v,w)=Cc(k)\langle v,w\rangle$$
and \eqref{F7} we find
\begin{align*}
&\tfrac{C}{2}\sum_rd_r(k)\sum_i(\langle u,[u,x_i^r]\rangle\langle[x_i^r,w],z\rangle+\langle u,[w,x_i^r]\rangle\langle[u,x_i^r],z\rangle)-\tfrac{3C}{2}c(k)\langle u,w\rangle\langle u,z\rangle=\\&\varphi(u,w)(u,z).
\end{align*}
We first evaluate the right hand side of \eqref{HGG1}: recall that 
$$k_{i,j}^{r,s}(u,v)=Cb(k)(\langle [u,x^r_i] ,[v,x^s_j]\rangle+\langle [u,x^s_j] ,[v,x^r_i]\rangle)+C\d_{i,j}\d_{r,s}a_r(k)\langle u,v\rangle,
$$
so
\begin{align}\label{eqq}
\varphi(u,w)(u,z)&=\sum_{i,j,r,s}Cb(k)(\langle [u,x^r_i] ,[w,x^s_j]\rangle+\langle [u,x^s_j] ,[w,x^r_i]\rangle)\langle[[u,x^r_i],x^s_j],z\rangle\\&+C\sum_{i,r}a_r(k)\langle[[u,x^r_i],x^r_i],z\rangle\langle u,w\rangle\notag\\\notag
&=\sum_{i,j,r,s}Cb(k)(\langle [u,x^r_i] ,[w,x^s_j]\rangle+\langle [u,x^s_j] ,[w,x^r_i]\rangle)\langle[[u,x^r_i],x^s_j],z\rangle\\&+C\sum_{r}a_r(k)\gamma_r\langle u,z\rangle\langle u,w\rangle.\notag
\end{align}

Take $u=f_{i_0}, w=z=[x_{-\theta},e_{i_0}]$. Passing to dual bases $\{x_i^r\},\{x^i_r\}$ of $\g^\natural_r$ and using \eqref{eqq}, relation \eqref{HGG1} becomes
\begin{align}\label{eq}
&\tfrac{C}{2}\sum_{r,i}d_r(k)\left(\langle f_{i_0},[f_{i_0},x_i^r]\rangle\langle[x^i_r,[x_{-\theta},e_{i_0}]],[x_{-\theta},e_{i_0}]\rangle+\langle f_{i_0},[[x_{-\theta},e_{i_0}],x_i^r]\rangle\langle[f_{i_0},x^i_r],[x_{-\theta},e_{i_0}]\rangle\right)\\
&-\tfrac{3C}{2}c(k)\langle f_{i_0},[x_{-\theta},e_{i_0}]\rangle^2=\notag\\\notag
&\sum_{i,j,r,s}Cb(k)\left(\langle [f_{i_0},x^r_i] ,[[x_{-\theta},e_{i_0}],x^s_j]\rangle+\langle [f_{i_0},x^s_j] ,[[x_{-\theta},e_{i_0}],x^r_i]\rangle\right)\langle[[f_{i_0},x^r_i],x^s_j],[x_{-\theta},e_{i_0}]\rangle\notag\\&+C\sum_{r}a_r(k)\gamma_r\langle f_{i_0},[x_{-\theta},e_{i_0}]\rangle^2.\notag
\end{align}

We now evaluate the left hand side of \eqref{eq}.
Assume first that $\g^\natural$ has two components, then $-\theta+2\a_{i_0}$ is not a root.
By weight considerations, $\langle f_{i_0},[f_{i_0},x_i^r]\rangle$ can be non zero only if $x_i^r$ has weight $-\theta+2\a_{i_0}$. 
%It follows  that $$\sum_i\langle f_{i_0},[f_{i_0},x_i^r]\rangle[x^i_r,[x_{-\theta},e_{i_0}]]=0.$$
Arguing in the same way, we also conclude that in the second sum $x_i^r$ should belong to $\h$, so that the left hand side of \eqref{eq} simplifies to
$$
\tfrac{C}{2}(\sum_rd_r(k)\sum_i\langle f_{i_0},[[x_{-\theta},e_{i_0}],h_i^r]\rangle\a_{i_0}(h_i^r)-3c(k) \langle f_{i_0},[x_{-\theta},e_{i_0}]\rangle)\langle f_{i_0},[x_{-\theta},e_{i_0}]\rangle,
%\\
%&=\varphi(f_{i_0},[x_{-\theta},e_{i_0}])(f_{i_0},[x_{-\theta},e_{i_0}]),\notag
$$
where $\{h_i^r\}$ is an orthonormal basis of $\h_r^\natural$. The above formula can be further reduced to 
%This in turn becomes
%\begin{equation}
%\tfrac{C}{2}(\sum_rd_r(k)\sum_i(\theta-\a_{i_0})(h_i^r)\langle f_{i_0},[x_{-\theta},e_{i_0}]\rangle\a_{i_0}(h_i^r)-3c(k) )f_{i_0}=\varphi(f_{i_0},[x_{-\theta},e_{i_0}])f_{i_0}.
%\end{equation}
%\begin{equation}
%\tfrac{C}{2}(\sum_rd_r(k)\sum_i(\theta-\a_{i_0})(h_i^r)\tfrac{1}{2}\theta(\a_{i_0}^\vee)\a_{i_0}(h_i^r)-3c(k) )f_{i_0}=\varphi(f_{i_0},[x_{-\theta},e_{i_0}])f_{i_0}.
%\end{equation}
\begin{equation}\label{eqqqq}
-\tfrac{C}{2}(\sum_rd_r(k)\Vert(h_{\a_{i_0}})^\natural_r\Vert^2+3c(k) ))\langle f_{i_0},[x_{-\theta},e_{i_0}]\rangle^2.
%=\varphi(f_{i_0},[x_{-\theta},e_{i_0}])(f_{i_0},[x_{-\theta},e_{i_0}]).
\end{equation}
Assume now that $\g^\natural$ has only one component (i.e., it is simple or $1$-dimensional).
Then, if $u,v\in\g_{-1/2}$,
\begin{align*}[[x_\theta,u],v]&=[[x_\theta,u],v]^\natural+\langle u,v\rangle x.
\end{align*}
In this case  \eqref{HGG1} becomes
$$
\tfrac{C}{2}(d_1(k)(\langle[[[x_\theta,u],u]^\natural,w]+[u,[[x_\theta,u],w]^\natural],z\rangle)-\tfrac{3C}{2}Q(u,w)\langle u,z\rangle=\varphi(u,w)(u,z).
$$
Since
\begin{align*}
[[[x_\theta,u],u]^\natural,w]+[u,[[x_\theta,u],w]^\natural]=[[[x_\theta,u],u],w]+[u,[[x_\theta,u],w]]-\langle u,w\rangle [u,x]
\end{align*}
and $[u,[[x_\theta,u],w]]=-[[[x_\theta,u],u],w]+[[x_\theta,u],[u,w]]$, we see that
\begin{align*}
[[[x_\theta,u],u]^\natural,w]+[u,[[x_\theta,u],w]^\natural]&=[[x_\theta,u],[u,w]]-\tfrac{1}{2}\langle u,w\rangle u\\
&=2\langle u,w\rangle[[x_\theta,u],x_{-\theta}]-\tfrac{1}{2}\langle u,w\rangle u
=-\tfrac{3}{2}\langle u,w\rangle u.
\end{align*}
The upshot is that
$$
-\tfrac{C}{2}(\tfrac{3}{2}d_1(k)+3c(k))\langle u,w\rangle\langle u, z\rangle=\varphi(u,w)(u,z).
$$
Substituting $u=f_{i_0}$ and $w=z=[x_{-\theta},e_{i_0}]$ the left hand side of \eqref{eq} becomes
\begin{equation}\label{524}
-\tfrac{C}{2}(\tfrac{3}{2}d_1(k)+3c(k))\langle f_{i_0},[x_{-\theta},e_{i_0}]\rangle^2.%=\varphi(f_{i_0},[x_{-\theta},e_{i_0}])(f_{i_0},[x_{-\theta},e_{i_0}]).
\end{equation}
Finally, if $\g^\natural$ has three components, 
since $d_1(k)=d_2(k)=d_3(k)$, formula \eqref{eqq} becomes
$$
-\tfrac{C}{2}(d_1(k)\Vert(h_{\a_{i_0}})^\natural\Vert^2+3c(k) ))\langle f_{i_0},[x_{-\theta},e_{i_0}]\rangle^2$$ which is indeed \eqref{524}.
To evaluate the right hand side of \eqref{eqq} in the current case, notice that $\gamma_r$ does not depend on $r$ (see also \eqref{Pan} below). From this and relation \eqref{ar} we deduce  that $a_r(k)$ does not depend on $r$. 
It follows, using \eqref{Pann} below,   that 
$$
C\sum_{r}a_r(k)\gamma_r\langle f_{i_0},[x_{-\theta},e_{i_0}]\rangle^2=\tfrac{C}{2} a_1(k)(2h^\vee -3)\langle f_{i_0},[x_{-\theta},e_{i_0}]\rangle^2,
$$
as in the case when  $\g^\natural$ has only one component.
\vskip5pt
The final outcome is that
%\begin{align*}
%&-\tfrac{C}{8}(\sum_rd_r(k)\Vert(h_{\a_{i_0}})^\natural_r\Vert^2+3c(k) ))\theta(\alpha_{i_0}^\vee)^2\\
%&=\sum_{i,j,r,s}Cb(k)(\langle [f_{i_0},x^r_i] ,[[x_{-\theta},e_{i_0}],x^s_j]\rangle+\langle [f_{i_0},x^s_j] ,[[x_{-\theta},e_{i_0}],x_i]\rangle)\langle[[f_{i_0},x^r_i],x^s_j],[x_{-\theta},e_{i_0}]\rangle\\
%&+\tfrac{C}{4}(\sum_{r}a_r(k)\gamma_r)\theta(\alpha_{i_0}^\vee)^2\end{align*}
%so
\begin{align*}
&\sum_{i,j,r,s}(\langle [f_{i_0},x^r_i] ,[[x_{-\theta},e_{i_0}],x^s_j]\rangle+\langle [f_{i_0},x^s_j] ,[[x_{-\theta},e_{i_0}],x_i]\rangle)\langle[[f_{i_0},x^r_i],x^s_j],[x_{-\theta},e_{i_0}]\rangle\\&=R(k)\langle f_{i_0},[x_{-\theta},e_{i_0}]\rangle^2,
\end{align*}
where
$$
R(k)=\begin{cases}
-\frac{\sum_rd_r(k)\Vert(h_{\a_{i_0}})^\natural_r\Vert^2+3c(k) +2\sum_{r}a_r(k)\gamma_r}{2b(k)}&\text{if $\g^\natural$ has two components,}\\[10pt]
-\frac{3/2d_1(k)+3c(k) +(2h^\vee-3)a_1(k)}{2b(k)}&\text{otherwise.}
\end{cases}
$$
It follows that $R(k)$  does not depend on $k$, hence it equals the value of its limit for $k\to\infty$. This limit is
$$
\lim_{k\to\infty}R(k)=\begin{cases}
\frac{3 - 4 h^\vee +2 \sum_r \bar h^\vee_r \Vert(h_{\a_{i_0}})^\natural_r\Vert^2}{2}&\text{if $\g^\natural$ has two components,}\\[10pt]
\frac{3-4 h^\vee + 3\bar h_1^\vee}{2}&\text{otherwise.}
\end{cases}
$$
\end{proof}

There are several relations among the values $\gamma_i,\dim \g^\natural_i,\dim\g,\bar h^\vee_i$ and $h^\vee$. 
Indeed, if $\g^\natural_i$ is abelian, then $\g^\natural_i=\C\varpi$ with $\varpi$ as in \eqref{varpi}. As noted in Section \ref{mingrad}, $\varpi$ acts on $\g_{-1/2}$ as $\pm I$, so the eigenvalue of $C_{\g^\natural_i}=\tfrac{\varpi^2}{(\varpi|\varpi)}$ is $\tfrac{1}{(\varpi|\varpi)}$. On the other hand $(\varpi|\varpi)=\tfrac{tr(ad(\varpi)^2)}{2h^\vee}=\tfrac{2\dim\g_{-1/2}}{2h^\vee}$, so we conclude that
 $\gamma_i=\frac{\dim\g^\natural_ih^\vee}{2
   (h^\vee-2)}$. Since $h_i^\vee=0$, this formula can be written as 
\begin{equation}\label{Pan}
\gamma_i=\frac{\dim\g^\natural_i(h^\vee-\bar h_i^\vee)}{2
   (h^\vee-2)}.
\end{equation}
By \cite[(2.2)]{P}, the index $ind_{\g_i^\natural}(\g_{1/2}\oplus\g_{-1/2})$
is $(h^\vee-\bar h^\vee_i)/\bar h^\vee_i$. The same index can be computed as $ind_{\g_i^\natural}(\g_{1/2})+ind_{\g_i^\natural}(\g_{-1/2})=2 \,ind_{\g_i^\natural}(\g_{-1/2})$ and this last quantity is computed by \cite[(1.3)]{P} to be $\frac{\gamma_i \dim\g_{-1/2}}{2\bar h_i^\vee \dim\g_i^\natural}$. Since $\dim\g_{-1/2}=2(h^\vee-2)$, it follows that 
\eqref{Pan} holds in these cases too.

By \eqref{casgm}, 
\begin{equation}\label{Pann}
\sum_r\gamma_r=h^\vee-3/2\end{equation} so, 
if $\g^\natural$ has two components one can solve for $\dim\g^\natural_2$ and obtain that
$$
\dim\g^\natural_2=\frac{\dim\g^\natural_1(\bar h_1^\vee-h^\vee)+
   2 (h^\vee)^2-7
   h^\vee+6}{h^\vee-\bar h_2^\vee}.
$$
Moreover, by \eqref{dimgm}, 
\begin{equation}\label{newdim}\dim \g=4h^\vee-5+\dim\g^\natural.\end{equation} 
Our analysis provides more refined  relations.
\begin{proposition}\label{dim} If $\g^\natural$ has  two components, then
\begin{equation}\label{3}
\dim \g= \frac{(h^\vee+1) \left(2 (h^\vee)^2+h^\vee
   (\bar h_1^\vee-2)-\bar h_1^\vee
   (\bar h_1^\vee+2)\right)}{(\bar h_1^\vee+2) (h^\vee-\bar h_1^\vee)},\quad \bar h_1^\vee+\bar h_2^\vee=
   h^\vee-2.
\end{equation}
Otherwise
\begin{equation}\label{1}
\dim\g= \frac{2 \left(5 (h^\vee)^2-h^\vee-6\right)}{h^\vee+6},\quad \bar h_1^\vee=
   \frac{2 (h^\vee-3)}{3}
\end{equation}
or
\begin{equation}\label{2}
\dim \g= 2 (h^\vee)^2-3 h^\vee+1,\quad \bar h_1^\vee=
   h^\vee-1.
\end{equation}
Moreover, \eqref{2} occurs if and only if $\g^\natural$ is simple and $\a_{i_0}$ is short.
\end{proposition}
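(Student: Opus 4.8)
The idea is to combine the structural identities \eqref{Pan}, \eqref{Pann} and \eqref{newdim} with what has just been extracted from the $\lambda$--bracket $[{G^v}_\lambda G^w]$: the factorization $f(k)=-\Gamma(k)/(4p(k))$ (with $p(k)$ as defined above), the explicit shapes $b(k)=-1/(4p(k))$ and $d_r(k)=2/(2k+h^\vee-\bar h_r^\vee)$, the fact that $c(k)$ (degree $-1$) and the $a_r(k)$ (degree $-2$) are the unique rational solutions of the Jacobi relations \eqref{JacobiL}, \eqref{trr}, \eqref{ar}, and, crucially, the statement of the preceding lemma that the rational function $R(k)$ is \emph{constant}, equal to the value computed there. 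In each case the mechanism is the same: write $R(k)$ out as the explicit rational function of $k$ prescribed by the lemma, impose that it is constant \emph{and} equal to the displayed value, and read off the resulting polynomial identities among $h^\vee$, the $\bar h_r^\vee$, $\dim\g$, $\dim\g^\natural_r$ and $\gamma_r$; these are then collapsed by means of \eqref{Pan}, \eqref{Pann} and \eqref{newdim}.

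First I would treat the case in which $\g^\natural$ has two components, where $p(k)=(k+\tfrac{h^\vee-\bar h_1^\vee}{2})(k+\tfrac{h^\vee-\bar h_2^\vee}{2})$ by \eqref{fh}. Combining \eqref{Pann} with \eqref{Pan} already gives $\dim\g^\natural_1(h^\vee-\bar h_1^\vee)+\dim\g^\natural_2(h^\vee-\bar h_2^\vee)=(2h^\vee-3)(h^\vee-2)$. Now multiply $R(k)$ through by $p(k)$: using $p(k)d_r(k)=k+\tfrac{h^\vee-\bar h_s^\vee}{2}$ for $\{r,s\}=\{1,2\}$ and $p(k)b(k)=-\tfrac14$, the expression $R(k)=2p(k)(\sum_r d_r(k)\Vert(h_{\a_{i_0}})^\natural_r\Vert^2+3c(k)+2\sum_r a_r(k)\gamma_r)$ is seen to be a rational function of $k$ of degree at most $1$. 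Imposing that its degree-one coefficient vanishes — after inserting the large-$k$ behaviour of $c(k)$ and $a_r(k)$, which is forced by \eqref{JacobiL}, \eqref{trr} and \eqref{newdim} — yields $\bar h_1^\vee+\bar h_2^\vee=h^\vee-2$; imposing that its constant coefficient equals $\tfrac12(3-4h^\vee+2\sum_r\bar h_r^\vee\Vert(h_{\a_{i_0}})^\natural_r\Vert^2)$ then pins down $\dim\g^\natural_1+\dim\g^\natural_2$, and \eqref{newdim} converts this into \eqref{3}. The quantities $\Vert(h_{\a_{i_0}})^\natural_r\Vert^2$, being the squared lengths of the projections onto the $\h^\natural_r$ of the element $h_{\a_{i_0}}\in\h$ dual to $\a_{i_0}$, are read off directly from the grading.

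Next I would treat the case in which $\g^\natural$ has one component, absorbing the three-component case (which, by the earlier remark, forces $\g$ of type $D_4$ and is governed by the same formulas). Here everything tightens: \eqref{Pann} gives $\gamma_1=h^\vee-\tfrac32$, whence \eqref{Pan} gives $\dim\g^\natural_1=\tfrac{(h^\vee-2)(2h^\vee-3)}{h^\vee-\bar h_1^\vee}$ and \eqref{newdim} gives $\dim\g=4h^\vee-5+\dim\g^\natural_1$, so that \emph{every} quantity occurring in $R(k)$ is a rational function of $h^\vee$ and $\bar h_1^\vee$ alone, with $p(k)=(k+\tfrac{h^\vee-\bar h_1^\vee}{2})(k+\tfrac{\bar h_1^\vee}{2}+1)$ by \eqref{comb}. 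Equating $R(k)=2p(k)(\tfrac32 d_1(k)+3c(k)+(2h^\vee-3)a_1(k))$ with its constant value $\tfrac12(3-4h^\vee+3\bar h_1^\vee)$ then gives a single polynomial relation $F(h^\vee,\bar h_1^\vee)=0$, which I expect to factor, up to a nonvanishing factor, as $(\bar h_1^\vee-\tfrac{2(h^\vee-3)}{3})(\bar h_1^\vee-(h^\vee-1))$; substituting each root back into the formulas above for $\dim\g^\natural_1$ and $\dim\g$ gives \eqref{1} and \eqref{2} respectively. For the last assertion, $\bar h_1^\vee=h^\vee-1\neq0$ excludes $\g^\natural$ one-dimensional, and the value $\bar h_1^\vee=2=\tfrac{2(h^\vee-3)}{3}$ occurring for $\g=D_4$ excludes the three-component case, so \eqref{2} forces $\g^\natural$ simple; conversely, running through the short list of $\g$ with $\g^\natural$ simple shows that $\a_{i_0}$ is short precisely for $\g=sp(2n)$, where indeed $\bar h_1^\vee=h^\vee-1$.

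The main obstacle I anticipate is computational rather than conceptual: extracting $c(k)$ and the $a_r(k)$ in closed form from the linear system \eqref{JacobiL}, \eqref{trr}, \eqref{ar}, keeping the two- (or three-) component bookkeeping under control, and checking that imposing ``$R(k)$ constant $=$ its value'' really collapses, through \eqref{Pan}, \eqref{Pann} and \eqref{newdim}, to exactly the stated relations — in particular that the polynomial $F$ in the one-component case factors precisely as claimed, with no spurious branch, and that the two combinations $\sum_r\Vert(h_{\a_{i_0}})^\natural_r\Vert^2$ and $\sum_r\bar h_r^\vee\Vert(h_{\a_{i_0}})^\natural_r\Vert^2$ entering the two-component computation are treated consistently with the grading data.
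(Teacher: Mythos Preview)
Your plan is essentially the paper's own argument: write $R(k)-\lim_{k\to\infty}R(k)$ as a rational function $P/Q$ in $k$ with coefficients in $h^\vee,\bar h_r^\vee,\dim\g,\dim\g^\natural_r,\gamma_r$, set the numerator $P$ identically to zero, and solve the resulting system using \eqref{Pan}, \eqref{Pann}, \eqref{newdim}; your more detailed description (clearing $p(k)$, tracking degrees, expecting a factorization of the one-component relation) is exactly how that computation unfolds.

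The only genuine divergence is in the final ``if and only if'' clause. You propose to rule out the abelian and $D_4$ cases (correctly) and then settle the remaining simple-$\g^\natural$ cases by inspecting the short list. That works, but the paper instead gives a uniform root-theoretic argument: writing $\theta=\sum_\alpha m_\alpha\alpha$ and $\theta_\Sigma=\sum_{\alpha\in\Sigma} n_\alpha\alpha$, one has $\bar h_1^\vee=\tfrac12(\theta_\Sigma|\theta_\Sigma)+\tfrac12\sum_{\alpha\in\Sigma}(\alpha|\alpha)n_\alpha$ and $h^\vee-1=(\alpha_{i_0}|\alpha_{i_0})+\tfrac12\sum_{\alpha\in\Sigma}(\alpha|\alpha)m_\alpha$, so $\bar h_1^\vee=h^\vee-1$ forces $(\theta_\Sigma|\theta_\Sigma)\ge 2(\alpha_{i_0}|\alpha_{i_0})$, hence $\alpha_{i_0}$ short (and then $\theta-2\alpha_{i_0}$ is a root of $\g^\natural$, whose connected support forces $\g^\natural$ simple); conversely, $\alpha_{i_0}$ short and $\g^\natural$ simple give $\theta_\Sigma=\theta-2\alpha_{i_0}$ with $(\theta_\Sigma|\theta_\Sigma)=2$, from which $\bar h_1^\vee=h^\vee-1$ drops out. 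Your case check buys brevity; the paper's argument buys a classification-free proof.
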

\begin{proof}
Write explicitly the rational function $R(k)-\lim_{k\to\infty}R(k)
$ as $P/Q$ with $P,Q$ polynomials in $k,h^\vee,\bar h^\vee_i$ and $\dim \g$. Since $P$ is identically zero,  by equating its coefficients to zero and solving the system of equations with respect to $\dim \g$ and $h^\vee_i/\nu_i$ we get the above formulas.

To finish the proof we show that  $\bar h_1^\vee=
   h^\vee-1$ if and only if  $\g^\natural$ is simple  and $\a_{i_0}$ is short.  Let $\Sigma$ be the set of simple roots of $\g^\natural$ and $\theta_\Sigma$ its highest root. Write $\theta=\sum_{\a\in\Pi}m_\a \a, \, \theta_\Sigma=\sum_{\a\in\Sigma}n_\a \a$ and note that  $n_\a\leq m_\a$ for all $\a\in\Sigma$. If $\bar h_1^\vee$, since $\nu_1=2/(\theta_\Sigma|\theta_\Sigma)$, we have 
   $$\bar h_1^\vee=\tfrac{1}{2}((\theta_\Sigma|\theta_\Sigma)+\sum_{\a\in\Sigma}(\a|\a)n_\a=\tfrac{1}{2}\sum_{\a\in\Pi}(\a|\a)m_\a=(\a_{i_0}|\a_{i_0})+\tfrac{1}{2}\sum_{\a\in\Sigma}(\a|\a)m_\a,$$
 hence
$$ (\theta_\Sigma|\theta_\Sigma)-2(\a_{i_0}|\a_{i_0})=\sum_{\a\in\Sigma}(\a|\a)(m_\a-n_\a)\geq 0.$$
It follows that $\a_{i_0}$ is short.  Then $\theta-2\a_{i_0}$ is a root of $\g^\natural$ and this forces $\g^\natural$ to be simple, otherwise the support of $\theta-2\a_{i_0}$ would be disconnected.

Assume now that $\g^\natural$ is simple and $\a_{i_0}$ is short. Then $\theta-2\a_{i_0}$ is a root of $\g^\natural$. This forces $\theta_\Sigma=\theta-2\a_{i_0}$ and $(\theta|\a_{i_0}^\vee)=2$, so that 
$(\a_{i_0}|\a_{i_0})=1$. We have 
\begin{align*}h^\vee-1&=\sum_{\a\in\D} \tfrac{(\a|\a)}{2}m_\a=\sum_{\a\in\Sigma} \tfrac{(\a|\a)}{2}n_\a+(\a_{i_0}|\a_{i_0})
=\tfrac{(\theta_\Sigma|\theta_\Sigma)}{2}\sum_{\a\in\Sigma} \tfrac{(\a|\a)}{(\theta_\Sigma|\theta_\Sigma)}n_\a+(\a_{i_0}|\a_{i_0})\\
&=\bar h_1^\vee-\tfrac{(\theta_\Sigma|\theta_\Sigma)}{2}+(\a_{i_0}|\a_{i_0})=\bar h_1^\vee,
\end{align*}
since $(\theta_\Sigma|\theta_\Sigma)=(\theta-2\a_{i_0}|\theta-2\a_{i_0})=2-4(\theta|\a_{i_0})+4=2$.
\end{proof}
\begin{remark}\label{DES}
 A brief inspection of the Dynkin diagrams shows that  $\g^\natural$ does not have exactly two  irreducible  ideals  precisely when $\g$ belongs to the Deligne's series $A_2 \subset G_2 \subset D_4 \subset F_4 \subset E_6 \subset E_7 \subset E_8$ 
or it is of type $C_n$. The first case occurs exactly when there is  a long simple root not orthogonal to the highest root. 
\end{remark}
\begin{remark}
It is worthwhile to recall that $\dim \g=r(h+1)$, where $r$ is the rank of $\g$ and $h$ is the Coxeter number of $\g.$
\end{remark}

 %Substituting these values in the explicit expressions for $a(k),b(k),c(k),d(k)$ we find
 \begin{proposition}\label{propfinale} Let $\{x_i\}$ be an orthonormal basis of $\g^\natural$. Then
 \begin{align*}
&[{G^v}_\l G^w]=\\&C\left(\tfrac{1}{4p(k)}\langle u,v\rangle\sum_i :J^{x_i}J^{x_i}:-\tfrac{1}{4p(k)}\sum_{i, j}(\langle[v,x_i],[w,x_j]\rangle+\langle[v,x_j],[w,x_i]\rangle):J^{x_i}J^{x_j}:\right)+\\
&C\left(-\tfrac{k+h^\vee}{2p(k)}\langle u,v\rangle L+\sum_r\tfrac{1}{k+\tfrac{h^\vee-\bar h_r^\vee}{2}}(\tfrac{1}{2}TJ^{[[x_\theta,v],w]_r^\natural}+\l J^{[[x_\theta,v],w]_r^\natural})+\tfrac{\l^2}{2}\langle u,v\rangle\right).
\end{align*}
\end{proposition}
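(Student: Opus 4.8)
The plan is to start from the penultimate Proposition, which already pins down the overall constant $C$, the double-pole coefficient $\tfrac{\lambda^2}{2}\langle v,w\rangle$, the identity $K=\tfrac{1}{2}H$ together with $H(v,w)=C\sum_r d_r(k)[[x_\theta,v],w]^\natural_r$ and $d_r(k)=\tfrac{1}{k+(h^\vee-\bar h^\vee_r)/2}$ (from \eqref{relh}), and the off-diagonal coefficient $b(k)=-\tfrac{1}{4p(k)}$. The only data left to determine are the two rational functions $a_r(k)$ and $c(k)$. First I would record their provenance: $c(k)=E(k)$ is the unique solution of the linear relation obtained by substituting \eqref{trr} into \eqref{JacobiL} (recall $Q=D(k)\langle\cdot,\cdot\rangle$ and $D=\Gamma E$, cf.\ \eqref{D}), while \eqref{ar} and \eqref{trrr}, combined with the normalization $f(k)/\Gamma(k)=-\tfrac{1}{4p(k)}$ coming from \eqref{fh} and \eqref{comb}, give $a_r(k)=\bigl(E_r(k)-\gamma_r/(2p(k))\bigr)/\dim\g^\natural_r$.

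Next I would feed in the constraints of Proposition \ref{dim}. The cleanest route is to \emph{verify} the asserted values rather than re-derive them: one checks that $c(k)=-\tfrac{k+h^\vee}{2p(k)}$ solves the linear relation coming from \eqref{JacobiL} and \eqref{trr}, and that \eqref{ar} then forces $a_r(k)=\tfrac{1}{4p(k)}$, once the dimension relations are substituted together with \eqref{Pan} for $\gamma_r$, \eqref{Pann} for $\sum_r\gamma_r$, and \eqref{newdim}. This splits into three cases: (i) $\g^\natural$ has two components, where $\bar h^\vee_1+\bar h^\vee_2=h^\vee-2$ gives $p(k)=(k+\tfrac{h^\vee-\bar h^\vee_1}{2})(k+1+\tfrac{\bar h^\vee_1}{2})$ and $\dim\g$ is the function of $(h^\vee,\bar h^\vee_1)$ in \eqref{3}; (ii) $\g^\natural$ simple or abelian with $\g$ in the Deligne series, where $\bar h^\vee_1=\tfrac{2(h^\vee-3)}{3}$ and $\dim\g$ is given by \eqref{1}; (iii) $\g^\natural$ with one component and $\a_{i_0}$ short, i.e.\ $\g$ of type $C_n$, where $\bar h^\vee_1=h^\vee-1$ and $\dim\g$ is given by \eqref{2}. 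In each case, eliminating $\dim\g$ and $\gamma_r$ reduces the identity to a polynomial identity in $k,h^\vee,\bar h^\vee_1$ --- in fact in $k,h^\vee$ in cases (ii) and (iii) --- which I would clear of denominators and check directly.

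Once $a_r(k)=\tfrac{1}{4p(k)}$ is seen to be independent of $r$, I would combine the trace terms $\langle v,w\rangle\sum_{i,r}a_r(k):J^{x^r_i}J^{x^r_i}:$ with the second sum in \eqref{ar} into a single sum over an orthonormal basis $\{x_i\}$ of the whole $\g^\natural$, obtaining $\tfrac{1}{4p(k)}\langle v,w\rangle\sum_i:J^{x_i}J^{x_i}:-\tfrac{1}{4p(k)}\sum_{i,j}(\langle[v,x_i],[w,x_j]\rangle+\langle[v,x_j],[w,x_i]\rangle):J^{x_i}J^{x_j}:$; adjoining the $L$-term with coefficient $c(k)=-\tfrac{k+h^\vee}{2p(k)}$, the $\tfrac{1}{2}TJ$ and $\lambda J$ terms built from $K=\tfrac{1}{2}H$ and the $d_r(k)$, and the double-pole term, gives precisely the claimed $\lambda$-bracket.

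The hard part will be the case-by-case polynomial verification in the second step: the numerators involved reach degree four in $k$ with coefficients quadratic in $h^\vee$ and $\bar h^\vee_1$, and one must confirm that they vanish identically on the locus cut out by Proposition \ref{dim}. There is no conceptual obstacle --- the only extra inputs are the structural identities \eqref{Pan}, \eqref{Pann}, \eqref{newdim} already in hand --- but the bookkeeping is delicate, so it pays to eliminate $\dim\g$ (via \eqref{3}, \eqref{1}, or \eqref{2}) and $\gamma_r$ (via \eqref{Pan}) at the very start, so that each identity is checked in as few variables as possible.
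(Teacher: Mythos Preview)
Your proposal is correct and follows essentially the same route as the paper: the paper's proof is the single sentence ``substitute the values \eqref{1}, \eqref{2}, \eqref{3} in the explicit expressions for $a(k),b(k),c(k),d(k)$,'' and your plan is a detailed unpacking of exactly that computation. One small wording issue: your case (ii) as stated (``$\g^\natural$ simple or abelian'') omits $D_4$, where $\g^\natural\cong sl(2)^3$ has three components; however Proposition~\ref{dim} treats the one- and three-component situations together via \eqref{1}, so the same substitution works there and nothing changes in your argument.
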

\begin{proof}  Substitute the values \eqref{1}, \eqref{2}, \eqref{3} in the explicit expressions for $a(k),b(k),$ $c(k),d(k)$.
	\end{proof}

\begin{rem}
 Choosing $C=4p(k)$ one obtains a refinement of  the formula (1.1) of \cite{AKMPP} which is in turn an improvement of the original formula of Kac and Wakimoto \cite{KW}.
 
Indeed, recall that $(x_\theta|[u_r,u^s])=\d_{r,s}$. As in \cite{KW}, we let $\langle\cdot,\cdot\rangle_{ne}$ be the invariant form on $\g_{1/2}$ defined by setting $\langle v,w\rangle_{ne}=(x_{-\theta}|[v,w])$.
Note that 
$$
\langle [x_\theta,u_r],[x_\theta,u^s]\rangle_{ne}=-\tfrac{1}{2}\d_{r,s}.
$$
In fact,
$$
\langle [x_\theta,u_r],[x_\theta,u^s]\rangle_{ne}=(x_{-\theta}|[[x_\theta,u_r],[x_\theta,u^s]])=\tfrac{1}{2}(u_r|[x_\theta,u^s])=-\tfrac{1}{2}(x_\theta|[u_r,u^s])
$$
It follows that $\{[x_\theta, u^r]\}$ gives a basis of $\g_{1/2}$ and that $\{2[x_\theta,u_s]\}$ is its dual basis.

If $u\in\g_{-1/2}$ then
$$
[u,[x_\theta,u_s]]^\natural=\sum_i([u,[x_\theta,u_s]]|x_i)x_i=-\sum_i(x_\theta|[[x_i,u_s],u])x_i,
$$
$$
[[x_\theta,u^r],v]^\natural=\sum_i([[x_\theta,u^r],v]|x_i)x_i=\sum_i(x_\theta|[[x_i,u^r],v])x_i,
$$
so
\begin{align*}
2\sum_r:J^{[u,[x_\theta,u_r]]^\natural} J^{[[x_\theta,u^r],v]^\natural}:&=-2\sum_{i,j,r}(x_\theta|[[x_i,u_r],u])(x_\theta|[[x_j,u^r],v]):J^{x_i}J^{x_j}:\\
&=-2\sum_{i,j,r}(x_\theta|[u_r,[u,x_i]])(x_\theta|[u^r,[v,x_j]]):J^{x_i}J^{x_j}:\\
&=-2\sum_{i,j}(x_\theta|[[u,x_i],[v,x_j]]):J^{x_i}J^{x_j}:.
\end{align*}
\end{rem}
\begin{theorem}\label{finale} If $\g^\natural$  has one or three components, then 
$$\d=-\frac{1}{2}\left(\frac{h^\vee-\bar h_1^\vee}{2}\right)\left(\frac{\bar h^\vee_1}{2}+1\right),$$
while if $\g^\natural$ has two components, then
$$\d=-\frac{1}{2}\left(\frac{h^\vee-\bar h_1^\vee}{2}\right)\left(\frac{h^\vee-\bar h_2^\vee}{2}\right)$$
and $\bar h_1^\vee+\bar h_2^\vee=h^\vee-2$.
In particular, in both cases,
\begin{equation}\label{ddddd}p(k)=k^2 +\left(\frac{h^\vee}{2}+1\right)k-2\d.\end{equation}
\end{theorem}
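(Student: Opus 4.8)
The plan is to feed the dimension identities of Proposition \ref{dim} into the closed formula \eqref{finaledelta} for $\d$. I would begin with a preliminary simplification: writing $(h^\vee+\tfrac12-\bar h_s^\vee)\gamma_s=(h^\vee+\tfrac12)\gamma_s-\bar h_s^\vee\gamma_s$ and summing with the help of \eqref{Pann}, formula \eqref{finaledelta} becomes
\[
\d=\frac{1}{12}\Bigl(\sum_s\bar h_s^\vee\gamma_s-(h^\vee)^2+h^\vee-3\Bigr),
\]
so that the task reduces to evaluating the single quantity $\sum_s\bar h_s^\vee\gamma_s$ in terms of $h^\vee$ and the $\bar h_s^\vee$. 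I would then split according to the number of components of $\g^\natural$, as in Proposition \ref{dim}.

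If $\g^\natural$ has one or three components, the $\bar h_s^\vee$ all coincide: this is trivial for $r=1$, and for $r=3$ we are in type $D_4$ with $\g^\natural\simeq sl(2)\oplus sl(2)\oplus sl(2)$ and $\bar h_s^\vee=2$ for every $s$, as shown in Section \ref{mingrad}. Hence $\sum_s\bar h_s^\vee\gamma_s=\bar h_1^\vee(h^\vee-\tfrac32)$ by \eqref{Pann}, and the displayed formula reads $\d=\tfrac{1}{12}\bigl(\bar h_1^\vee(h^\vee-\tfrac32)-(h^\vee)^2+h^\vee-3\bigr)$. Substituting the two alternatives of Proposition \ref{dim}, relation \eqref{1} (that is, $\bar h_1^\vee=\tfrac{2(h^\vee-3)}{3}$) gives $\d=-\tfrac{h^\vee(h^\vee+6)}{36}$, and relation \eqref{2} (that is, $\bar h_1^\vee=h^\vee-1$) gives $\d=-\tfrac{h^\vee+1}{8}$. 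Carrying out the same two substitutions in $-\tfrac12\bigl(\tfrac{h^\vee-\bar h_1^\vee}{2}\bigr)\bigl(\tfrac{\bar h_1^\vee}{2}+1\bigr)$ reproduces exactly $-\tfrac{h^\vee(h^\vee+6)}{36}$ and $-\tfrac{h^\vee+1}{8}$ respectively, which establishes the claimed formula in this case.

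If $\g^\natural$ has two components, I would use \eqref{Pan} in the form $\dim\g^\natural_i=\tfrac{2(h^\vee-2)\gamma_i}{h^\vee-\bar h_i^\vee}$ and combine the two linear relations $\gamma_1+\gamma_2=h^\vee-\tfrac32$ (from \eqref{Pann}) and $\dim\g^\natural_1+\dim\g^\natural_2=\dim\g-4h^\vee+5$ (from \eqref{newdim}), with $\dim\g$ given by the explicit expression \eqref{3}. When $\bar h_1^\vee\ne\bar h_2^\vee$ this is a nondegenerate system, so $\gamma_1,\gamma_2$ and hence $\sum_s\bar h_s^\vee\gamma_s$ are determined as rational functions of $h^\vee$ and $\bar h_1^\vee$; when $\bar h_1^\vee=\bar h_2^\vee$, relation \eqref{3} forces both to equal $\tfrac{h^\vee-2}{2}$ and then $\sum_s\bar h_s^\vee\gamma_s=\tfrac{h^\vee-2}{2}(h^\vee-\tfrac32)$ immediately. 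Plugging the resulting value into the displayed formula and simplifying, keeping $\bar h_1^\vee+\bar h_2^\vee=h^\vee-2$ in mind, gives $\d=-\tfrac12\bigl(\tfrac{h^\vee-\bar h_1^\vee}{2}\bigr)\bigl(\tfrac{h^\vee-\bar h_2^\vee}{2}\bigr)$.

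Finally, \eqref{ddddd} is immediate from the definition of $p(k)$: expanding $(k+\tfrac{h^\vee-\bar h_1^\vee}{2})(k+\tfrac{h^\vee-\bar h_2^\vee}{2})$ in the two-component case, the coefficient of $k$ is $\tfrac12\bigl(2h^\vee-\bar h_1^\vee-\bar h_2^\vee\bigr)=\tfrac{h^\vee}{2}+1$ by the relation just proved, and the constant term is $\tfrac14(h^\vee-\bar h_1^\vee)(h^\vee-\bar h_2^\vee)=-2\d$; similarly, expanding $(k+\tfrac{h^\vee-\bar h_1^\vee}{2})(k+\tfrac{\bar h_1^\vee}{2}+1)$ in the remaining case gives coefficient of $k$ equal to $\tfrac{h^\vee}{2}+1$ and constant term $\tfrac{h^\vee-\bar h_1^\vee}{2}\bigl(\tfrac{\bar h_1^\vee}{2}+1\bigr)=-2\d$. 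The main obstacle is the bookkeeping in the two-component case: once \eqref{3} is substituted, solving the linear system and reducing the ensuing rational expression for $\d$ is fairly heavy, and one must also confirm that the system is genuinely nondegenerate — i.e. that $\bar h_1^\vee\ne\bar h_2^\vee$ — outside the trivially handled degenerate subcase.
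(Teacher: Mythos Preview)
Your proposal is correct and follows the same route as the paper: substitute the identities of Proposition~\ref{dim} together with \eqref{Pan} and \eqref{Pann} into \eqref{finaledelta}. One small remark on the two-component case: the nondegeneracy worry is unnecessary, since you do not actually need the individual $\gamma_i$ but only $\sum_s\bar h_s^\vee\gamma_s=h^\vee S-(a\gamma_1+b\gamma_2)$ with $a=h^\vee-\bar h_1^\vee$, $b=h^\vee-\bar h_2^\vee$, and $a\gamma_1+b\gamma_2=(a+b)S-(b\gamma_1+a\gamma_2)$ follows from your two linear relations without inverting anything; the computation then collapses to $\d=-ab/8$ regardless of whether $\bar h_1^\vee=\bar h_2^\vee$.
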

\begin{proof}
Just substitute \eqref{Pan}, \eqref{1}, \eqref{2}, and \eqref{3} in formula \eqref{finaledelta}.
\end{proof}
\begin{rem}\label{CCPP} Let $V$ be a finite dimensional $Y(\g)$-module.  Using the (Hopf algebra) automorphism  $\tau_u,\,u\in\mathbb C$ defined by 
$\tau_u(x)=x,\,\tau_u(J(x))=J(x)+ux,\,x\in\g$, the  representation $V$ can be pulled back to give a one-parameter family of representations
$V(u)$.

Recall now that the R-matrix associated to $Y(\g)$-modules $V,W$ is of the form 
$R_{V,W}(u)=I_{V,W}(u)\sigma$,
where $\sigma$ is the switch automorphism and $I_{V,W}(u): V\otimes W(u)\to W(u) \otimes V$
is the unique intertwining operator which preserves the tensor product of the highest weight vectors in $V,W$. 
Since $I_{V,W}(u)$ is a $\g$-module map, it must preserve the isotypic components in $V\otimes W$. Denote by $(V\otimes W(u))_\g$, $(W(u)\otimes V)_\g$ the isotypic components corresponding to the adjoint representation and by $(V\otimes W(u))_0$, $(W(u)\otimes V)_0$ the isotypic components corresponding to the trivial representation. Set also 
%$$
%P_\g=I(u)_{|(V\otimes W(u))_\g}:(V\otimes W(u))_\g\to (W(u)\otimes V)_\g
%$$
%and 
%$$
%P_0=I(u)_{|(V\otimes W(u))_0}:(V\otimes W(u))_0\to (W(u)\otimes V)_0.
%$$
$$P_\g=I_{V,W}(u)_{|(V\otimes W(u))_\g},\quad
P_0=I_{V,W}(u)_{|(V\otimes W(u))_0}.$$
Assume $\g$ is not $sl(n)$.
In the special case when $V=W=\mathcal V=\g\oplus\C$,  the adjoint representation occurs in  $\mathcal V\otimes\mathcal V$ with multiplicity three and the trivial representation with multiplicity two. 

As in Section 5.4 of \cite{ChPr1}, choose the following bases for the $\g$-highest weight spaces of $\mathcal V\otimes\mathcal V$ of weight $\theta$ and $0$:
$$\{x_\theta\otimes1+1\otimes x_\theta,\delta[x_\theta\otimes 1,C_\g],x_\theta\otimes1-1\otimes x_\theta\},\quad\quad\{1\otimes1,1\otimes1-\delta C_\g\}.
$$

One  the main results of \cite{ChPr1}  is the explicit computation of the matrices of $P_\g$ and $P_0$ in the bases given above. The final outcome, as far as we are concerned, is that the entries of these matrices are rational functions of $u$ whose denominator is either $g(u)=u^2-(\tfrac{h^\vee}{2}+1)u-2\delta=p(-u)$ (using \eqref{ddddd}), or $u-1$, or $(h^\vee/2-u)g(u)$.

%\begin{theorem}\label{CPT} Assume $\g\ne sl(n),E_8$.
%For any $i\in I$, there exists a fundamental representation $L(i)$ of $\g$ extending to $Y(\g)$ such that
%\begin{enumerate}
%\item The $\g$-module $\g$ occurs with multipicity one in $L(i)\otimes L(i)^*$.
%\item
%The  R-matrix $R_{L(i),L(i)^*}$ has a singularity  at $-h^\vee_i/2\nu_i$;
%\item $Ker\,I_{L(i),L(i)^*}(-h^\vee_i/2\nu_i)$ is  isomorphic to 
%$\g\oplus \C$ as a $\g$-module.
%\end{enumerate}

%The list of representations $L(i)=V_{\omega^{(i)}}$ is as follows: 
%{\tiny

%\vskip 5pt
%\centerline{\begin{tabular}{c|c|c|c||c|c|c|c}
%$\g$&$\g^\natural$&$\omega^{(i)}$&$h_i^\vee/2\nu_i$&$\g$&$\g^\natural$&$\omega^{(i)}$&$h_i^\vee/2\nu_i$\\
%\hline
%$so(n), n\geq 9$&$sl(2)\oplus so(n-4)$&$\omega_1; \omega_n$&$1 ; n/2-3$&$E_6$&$sl(6)$&$\omega_1$ & $3$\\\hline
%$sp(n), n\geq 2$&$sp(n-2)$&$\omega_n$&$n/2$&$E_7$&$so(12)$&$\omega_7$ & $5$\\\hline
%$G_2$&$sl(2)$&$\omega_1$&$1$&$F_4$&$sp(6)$&$\omega_4$ & $2$\\
%\end{tabular}}
%}
%\end{theorem}

\end{rem}

\vskip30pt
\setlength{\itemsep}{2.5pt} 
 
\vskip10pt
 \footnotesize{

\noindent{\bf V.K.}: Department of Mathematics, MIT, 77
Mass. Ave, Cambridge, MA 02139;\newline
{\tt kac@math.mit.edu}
\vskip5pt
\noindent{\bf P.MF.}: Politecnico di Milano, Polo regionale di Como,
Via Valleggio 11, 22100, Como, Italy;\newline {\tt pierluigi.moseneder@polimi.it}
\vskip5pt
\noindent{\bf P.P.}: Dipartimento di Matematica, Sapienza Universit\`a di Roma, P.le A. Moro 2,
00185, Roma, Italy;\newline {\tt papi@mat.uniroma1.it}
}

\end{document}